\crefname{assumption}{Assumption}{Assumption} 
\title{A New Truncation Algorithm for Markov Chain Equilibrium Distributions with Computable Error Bounds}
\author{Alex Infanger\thanks{Institute for Computational \& Mathematical Engineering, Stanford University, Stanford, CA 94305.
(\email{alexinf@stanford.edu}, \url{https://stanford.edu/\~alexinf/}).}
\and Peter W. Glynn\footnotemark[2] \thanks{Department of Management Science \& Engineering, Stanford University, Stanford, CA 94305.
  (\email{glynn@stanford.edu}, \url{https://stanford.edu/\~glynn/}). }
  }
\newcommand{\R}{\mathbb{R}}
\newcommand\norm[1]{\left\lVert#1\right\rVert}
\newcommand{\mnorm}[1]{{\left\vert\kern-0.25ex\left\vert\kern-0.25ex\left\vert #1 
    \right\vert\kern-0.25ex\right\vert\kern-0.25ex\right\vert}}
\begin{document}

\maketitle

\begin{abstract}
This paper introduces a new algorithm for numerically computing equilibrium (i.e. stationary) distributions for Markov chains and Markov jump processes with either a very large finite state space or a countably infinite state space. The algorithm is based on a ratio representation for equilibrium expectations in which the numerator and denominator correspond to expectations defined over paths that start and end within a given return set $K$. When $K$ is a singleton, this representation is a well-known consequence of regenerative process theory. For computational tractability, we ignore contributions to the path expectations corresponding to excursions out of a given truncation set $A$. This yields a truncation algorithm that is provably convergent as $A$ gets large. Furthermore, in the presence of a suitable Lyapunov function, we can bound the path expectations, thereby providing computable and convergent error bounds for our numerical procedure. Our paper also provides a computational comparison with two other truncation methods that come with computable error bounds. The results are in alignment with the observation that our bounds have associated computational complexities that typically scale better as the truncation set gets bigger.
\end{abstract}

\begin{keywords}
  Markov chain, truncation, error bounds, stationary distribution.
\end{keywords}

\begin{AMS}
  {60J10, 60J22, 65C40, 60J27, 60J45, 60J74, 90C05, 90C90} 
\end{AMS}

\section{Introduction}

Let $X=(X_n:n\geq 0)$ be an irreducible positive recurrent Markov chain on a discrete state space $\mathcal{S}$ with a very large or infinite number of states, having a one-step transition matrix $P=(P(x,y): x,y\in\mathcal{S})$. Such a Markov chain has a unique \emph{equilibrium distribution} $\pi=(\pi(x):x\in\mathcal{S})$ that we will encode as a row vector. Such distributions are also equivalently known as steady-state distributions, stationary distributions, and invariant distributions for the Markov chain $X$. When the cardinality $|S|$ is very large or infinite, one can not numerically compute the equilibrium distribution exactly via linear algebraic methods, and state space ``truncation'' to some finite subset $A\subseteq S$ then becomes necessary. 

There is a significant literature on the equilibrium distribution truncation problem, starting with early work of \citet{senetaFiniteApproximationsInfinite1967}. Many of these papers take the 
sub-stochastic truncated transition matrix, stochasticize the rows of this matrix, and use 
the equilibrium distribution of the stochasticized matrix as an approximation to the
equilibrium distribution of the original transition matrix. These methods, that involve addition of probability mass to each row of the truncated matrix, are known as “truncation/augmentation” algorithms. \citet{wolfApproximationInvariantProbability1980} used Foster's criterion to prove convergence for certain families of augmentations, and \citet{senetaComputingStationaryDistribution1980} proved that tightness of the approximating equilibrium distributions suffices to guarantee convergence. \citet{gibsonAugmentedTruncationsInfinite1987} established convergence for general augmentations when the original transition matrix is upper Hessenberg or contains a column uniformly bounded away from zero, and \citet{gibsonMonotoneInfiniteStochastic1987} established convergence for general augmentations for stochastically monotone chains. \citet{heymanApproximatingStationaryDistribution1991} used ideas from the theory of regeneration to give a probabilistic condition for verifying convergence, while \citet{zhaoCensoredMarkovChain1996} observed that an optimal augmentation is achieved by the ``censored Markov chain''; see also \cref{sec::Approximation}. Recently, \citet{infanger2022convergence} provided a Lyapunov condition that guarantees convergence of general augmentations. \citet{infangerConvergenceTruncationScheme2022} give conditions under which ``fixed state'' augmentations converge in both discrete and continuous state spaces.

\citet{tweedieTruncationApproximationsInvariant1998} developed convergence results for geometrically ergodic chains (as well as other chains) and provided error bounds for the approximation. Recent works finding error bounds under various conditions (e.g. geometric drift, block-monotonicity) include \citet{herveApproximatingMarkovChains2014}, \citet{masuyamaErrorBoundsAugmented2015}, and \citet{masuyamaErrorBoundsAugmented2016}. Analogous assumptions can lead to error bounds for the corresponding truncation problem in continuous time. See, for example, \citet{guptaFiniteStateProjection2017} (geometric drift) and \citet{masuyamaContinuoustimeBlockmonotoneMarkov2017} (block-monotonicity). \citet{liuErrorBoundsAugmented2018} provide an error bound for general augmentations in continuous time under a Lyapunov condition. An approach to producing an approximation and error bounds based on linear progamming is presented in \citet{kuntzBoundingStationaryDistributions2019}. Current applications related to bio-chemical reaction networks have stimulated significant recent interest in the truncation problem, since the associated chains tend to have infinite state spaces. The survey of \citet{kuntzStationaryDistributionsContinuoustime2021} discusses the current state-of-the-art.

In this paper, we introduce a new method for computing approximations to $\pi$ based on such truncations, which comes with a computable a posteriori theoretically valid error bound. Our method does not use truncation/augmentation methods. Rather, our method exploits a ratio representation for the equilibrium distribution in which the numerator and the denominator are expectations taken over paths describing excursions starting and finishing in some chosen ``return set'' $K$; see \cref{eq::GeneralizedRegenerativeRepresentation}. 

In particular, we do not stochasticize the ``northwest corner'' of the one-step transition matrix as do standard augmentation algorithms. Instead, our approach approximates the excursion expectations associated with our ratio representation; see \cref{sec::Approximation}. The approximation is guaranteed to converge as the size of the truncation set $A$  gets large; see \cref{prop_convergence}. Furthermore, the excursion expectations that are central to our approach are well suited to the construction of rigorous bounds, in the presence of a suitable Lyapunov function: see \cref{assumption::Lyapunov1} and \cref{prop-Lyapunov}. Our approach therefore leads to upper and lower bounds on the excursion expectations that are easily computable, and that appear to yield good bounds for equilibrium quantities in our computational examples.

The bounds we develop do not assume knowledge of rates of convergence of the $n$-step transition probabilities to the equilibrium limit, as do many of the existing bounds; see, for example, \citet{masuyamaErrorBoundsAugmented2015,herveApproximatingMarkovChains2014,tweedieTruncationApproximationsInvariant1998}. Such bounds tend to be (very) conservative, and are difficult to compute in many examples. By contrast, our error bounds depend only on the knowledge of a stochastic Lyapunov function $g$; see \cref{assumption::Lyapunov1}. Use of Lyapunov functions to obtain error bounds is standard in the truncation literature. However, as noted earlier, our use of Lyapunov functions differs, in that it is used directly to bound the relevent excursion expectations. While such stochastic Lyapunov functions $g$ are themselves somewhat difficult to find in many examples, any rigorous error bound will need information related to the transition dynamics of $X$ from $A^c$ back into $A$; see \cref{sec::role-of-Lyapunov}. Such a Lyapunov bound is one vehicle for capturing the necessary information. 

The key contributions of this paper include:

\begin{enumerate}[label=\arabic*)]
\item  The introduction of our new algorithm for approximating $\pi$ and the proof of convergence (\cref{sec::Approximation});
\item  The development of new rigorous computable error bounds for equilibrium expectations, as well as a proof that the bounds converge (\cref{sec::EqExpBounds});
\item  The development of a computable bound on the weighted total variation distance of our approximation from the true distribution $\pi$ (\cref{sec::TV-bounds});
\item  Describing the connection of our row-normalized approximation to a conditioned chain (\cref{sec::probabilistic-perspective-on-row-normalization});
\item  Describing the connection between a special case of our approximation to the ``exit approximation'' discussed in \citet{senetaNonnegativeMatricesMarkov2006} in the context of ``fixed state'' augmentation (\cref{sec::exit_approx});
\item  Generalizing our theory to truncation approximations for Markov jump processes (\cref{sec::MarkovJumpProcess});
\item Providing a discussion of our algorithm when applied to reducible chains (\cref{sec::Reducible}).
\end{enumerate}
 Our paper also includes a computational section (\cref{sec::NumericalExperiments}) that discusses the implementation and performance of our algorithm and bounds on a number of different examples, and makes comparisons to an existing set of algorithms described by \citet{kuntzStationaryDistributionsContinuoustime2021}. A key element in our contribution is that our error bounds go to zero rapidly, even when our choice of Lyapunov function is poor; see \cref{sec::role-of-Lyapunov} for a description of this point.

\section{A Description of the Truncation Approximation to the Equilibrium Distribution\label{sec::Approximation}}

Our approach involves first selecting a finite subset $A\subseteq S$ that represents the truncation set. The set $A$ should be chosen as large as possible, given the computational constraints associated with solving the linear systems of equations that our algorithm generates. Within the truncation set $A$, we choose a ``return set'' $K\subseteq A$ that will play a major role in our approach. We will return to the choice of $K$ later in this paper.

To describe our truncation approximation to the equilibrium distribution $\pi=(\pi(x): x\in S)$ associated with $P$, we need some additional notation. For $x\in S$, let $P_x(\cdot)=P(\cdot|X_0=x)$ be the probability on the path-space of $X$, conditional on $X_0=x$, and let $E_x(\cdot)$ be its associated expectation operator. For a given distribution $\mu=(\mu(x):x\in S)$ (encoded as a row vector), let
\begin{align*}
P_\mu(\cdot) \overset{\Delta}{=} \sum_{x}^{}\mu(x)P_x(\cdot)
\end{align*}
and
\begin{align*}
E_\mu(\cdot)\overset{\Delta}{=}\sum_{x}^{}\mu(x)E_x(\cdot).
\end{align*}
Given the return set set $K$ and truncation set $A$, let
\begin{align*}
T_K &= \inf\{n\geq 1: X_n\in K\},\\
T &= \inf\{n\geq 1: X_n\in A^c\},
\end{align*}
so that $T_K$ is the ``return time'' to $K$, and $T$ is the exit time from $A$. We shall make extensive theoretical use of the $K$-valued Markov chain \mbox{$Y=(Y_n:n\geq 0)$} that has transition probabilities
\begin{align*}
P_K(x,y) = P_x(X_{T_K}=y)
\end{align*}
for $x,y\in S$. The Markov chain $Y$ is called the \emph{process on K}; see p.29 of \citet{oreyLectureNotesLimit1971}. It is also known as the \emph{censored Markov chain}. The irreducibility of $X$ implies that $Y$ is irreducible, so that $Y$ has a unique equilibrium distribution $\pi_K=(\pi_K(x):x\in S)$, where $\pi_K$ satisfies 
\begin{align*}
\pi_K = \pi_KP_K,
\end{align*}
and $P_K = (P_K(x,y):x,y\in K)$. \\

We rely on the following ``ratio'' expression for $\pi(\cdot)$, namely
\begin{align}
\pi(y) = \frac{E_{\pi_K}\sum_{j=0}^{T_K-1} I(X_j=y)}{E_{\pi_K}T_K}; \label{eq::GeneralizedRegenerativeRepresentation}
\end{align}
see p.32 of \citet{oreyLectureNotesLimit1971}. (The expression there sums from $j=1$ to $T_K$, but $X_{T_K}$ has the same distribution as $X_0$.) We note that when $K$ is a singleton (i.e. $K=\{z\}$ for some $z$), \cref{eq::GeneralizedRegenerativeRepresentation} reduces to the standard ratio formula that is a well-known consequence of regenerative process theory; see, for example, p.14 of \citet{asmussenAppliedProbabilityQueues2008}. The equality \cref{eq::GeneralizedRegenerativeRepresentation} expresses the equilibrium probabilities in terms of expectations taken over ``$K$-cycles''. By applying Fubini's theorem, \cref{eq::GeneralizedRegenerativeRepresentation} immediately implies that when we encode a function $r:S\rightarrow\mathbb{R}$ as a column vector,
\begin{align}
\pi r &=\frac{E_{\pi_K}\sum_{j=0}^{T_K-1}r(X_j)}{E_{\pi_K}T_K} \label{eq::RegenerativeRepresentation-pir}
\end{align}
for any non-negative $r$. Hence, \cref{eq::RegenerativeRepresentation-pir} provides a representation of the equilibrium expectation $\pi r$ in terms of a $K$-cycle, for general non-negative ``reward'' functions on $S$. (To handle functions of mixed sign, we can split the function into its positive and negative parts.)

Let $e$ be the function for which $e(x)=1$ for $x\in S$. Then \cref{eq::RegenerativeRepresentation-pir} asserts that
\begin{align*}
\pi r = \frac{\kappa(r)}{\kappa(e)},
\end{align*}
where 
\begin{align*}
\kappa(w) = E_{\pi_K}\sum_{j=0}^{T_K-1}w(X_j)
\end{align*}
for a generic non-negative function $w$.

We now provide a representation of $\kappa(w)$ that will form the theoretical basis for our algorithm. To describe our result, we first represent our transition matrix $P$ in block-partitioned form, where the blocks are represented by $K,A',$ and $A^c$, with $A'\overset{\Delta}{=}A-K$. In particular, we write
\begin{align*}
P=
\begin{blockarray}{cccc}
& K & A' & A^c \\
\begin{block}{c(ccc)}
K & P_{11} & P_{12} & P_{13} \\
A' & P_{21} & P_{22} & P_{23} \\
A^c & P_{31} & P_{32} & P_{33} \\
\end{block}
\end{blockarray}\ \ \ .
\end{align*}
Similarly, we write the column vector $w$ in block-partitioned form, namely as $w^T=(w_1^T,w_2^T,w_3^T)$. Furthermore, we set 
\begin{align*}
B = \begin{pmatrix}
P_{22} & P_{23}\\
P_{32} & P_{33}
\end{pmatrix},
\end{align*}
and $\bar w^T = (w_2^T, w_3^T)$. Finally, we write $\sum_{n=0}^{\infty} B^n\bar w$ in block-partitioned form over the blocks corresponding to $A'$ and $A^c$, namely
\begin{align*}
\sum_{n=0}^{\infty}B^n\bar w \overset{\Delta}{=}\begin{pmatrix}
\eta'(\bar w)\\
\eta(\bar w)
\end{pmatrix}.
\end{align*}
We can now state the theorem that provides the basis for our algorithm.

\begin{theorem}\label{thm::linear_algebra_representation_for_sum_of_rewards} If $w$ is non-negative, then
\begin{align}
\kappa(w) = \pi_K\left[w_1 + P_{12}(I-P_{22})^{-1}w_2 + P_{13}\eta(\bar w) + P_{12}(I-P_{22})^{-1}P_{23}\eta(\bar w)\right].\label{eq::block_linear_system}
\end{align}
\end{theorem}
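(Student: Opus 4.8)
The plan is to establish \cref{eq::block_linear_system} by a first-step (renewal) decomposition of the $K$-cycle reward $\kappa(w)$, followed by an algebraic elimination that recasts the resulting expression into the stated block form. Since the chain started from $\pi_K$ occupies $K$ at time $0$ and $T_K$ is the first return time to $K$, every intermediate state $X_1,\dots,X_{T_K-1}$ necessarily lies in $K^c=A'\cup A^c$. I would therefore split
\[
\kappa(w)=E_{\pi_K}w(X_0)+E_{\pi_K}\sum_{j=1}^{T_K-1}w(X_j)=\pi_K w_1+E_{\pi_K}\sum_{j=1}^{T_K-1}w(X_j),
\]
isolating the contribution of the starting state in $K$ from the contribution of the excursion through $K^c$.

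For the excursion term, I would introduce, for $z\in K^c$, the expected reward accumulated before returning to $K$,
\[
g(z)=E_z\sum_{j=0}^{T_K-1}w(X_j),
\]
and note that $g$, viewed as a column vector over $K^c$, decomposes into the blocks $\eta'(\bar w)$ (on $A'$) and $\eta(\bar w)$ (on $A^c$). A one-step conditioning argument, valid in $[0,\infty]$ for non-negative $w$ by the Markov property and monotone convergence, gives $g=\bar w+Bg$, since from $z\in K^c$ the chain either re-enters $K$ (terminating the excursion) or moves within $K^c$ according to $B$. Iterating yields $g=\sum_{n=0}^{\infty}B^n\bar w$, which is exactly the vector with blocks $\eta'(\bar w)$ and $\eta(\bar w)$ introduced before the theorem. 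Conditioning on $X_1$ and using that the first transition out of $K$ into $K^c$ is governed by the blocks $P_{12}$ (into $A'$) and $P_{13}$ (into $A^c$), the excursion term becomes $\pi_K\big(P_{12}\,\eta'(\bar w)+P_{13}\,\eta(\bar w)\big)$, so that
\[
\kappa(w)=\pi_K\big[w_1+P_{12}\,\eta'(\bar w)+P_{13}\,\eta(\bar w)\big].
\]

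It then remains to eliminate $\eta'(\bar w)$. Reading off the top block (over $A'$) of the identity $g=\bar w+Bg$ gives $\eta'(\bar w)=w_2+P_{22}\,\eta'(\bar w)+P_{23}\,\eta(\bar w)$, i.e. $(I-P_{22})\eta'(\bar w)=w_2+P_{23}\,\eta(\bar w)$. Since $A'$ is finite and the chain leaves $A'$ with probability one (by recurrence it eventually reaches $K$, which is disjoint from $A'$), the sub-stochastic matrix $P_{22}$ has spectral radius strictly less than one, so $(I-P_{22})^{-1}=\sum_{n\ge 0}P_{22}^{\,n}$ exists and is non-negative. Solving gives $\eta'(\bar w)=(I-P_{22})^{-1}w_2+(I-P_{22})^{-1}P_{23}\,\eta(\bar w)$, and substituting this into the displayed expression for $\kappa(w)$ produces precisely \cref{eq::block_linear_system}.

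The elimination step is routine; the point requiring care is the justification of the infinite-dimensional manipulations, since $K^c$ is infinite through $A^c$. I would handle this by working throughout with non-negative $w$, so that every interchange of summation is a Tonelli/monotone-convergence statement valid in $[0,\infty]$, and by establishing the fixed-point relation $g=\bar w+Bg$ probabilistically rather than by subtracting possibly-infinite quantities. The only genuine matrix inverse needed is $(I-P_{22})^{-1}$ over the \emph{finite} set $A'$, whose existence follows from positive recurrence as above; the series $\sum_n B^n\bar w$ defining $\eta'(\bar w)$ and $\eta(\bar w)$ converges in $[0,\infty]$ by construction, so no separate invertibility of $I-B$ on the infinite block is required.
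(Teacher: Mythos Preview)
Your argument is correct and amounts to the same first-step decomposition as the paper's proof, organized a bit differently. The paper splits the $K$-cycle at the exit time $T$ from $A$---into the segment up to $(T_K\land T)-1$ and the post-$T$ segment on $\{T<T_K\}$---and computes each piece directly as an explicit path sum, arriving at $(I-P_{22})^{-1}w_2$ and $(I-P_{22})^{-1}P_{23}\eta(\bar w)$ without ever passing through $\eta'(\bar w)$. Your route via the $A'$-block of $g=\bar w+Bg$ is algebraically cleaner; the only point to watch is that the rearrangement to $(I-P_{22})\eta'(\bar w)=w_2+P_{23}\eta(\bar w)$ subtracts a possibly-infinite $P_{22}\eta'(\bar w)$, which you elsewhere say you wish to avoid. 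The direct identity $\eta'(\bar w)=\sum_{n\ge 0}P_{22}^n\bigl(w_2+P_{23}\eta(\bar w)\bigr)$---obtained by the same splitting at $T$ that the paper uses, now started from $x\in A'$---sidesteps this and completes your argument without change.
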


\begin{proof} For $x\in K$, we can write
\begin{align*}
E_x \sum_{j=0}^{T_K-1} w(X_j) &= w(x) + E_x\left[\sum_{j=1}^{(T_K\land T)-1}w(X_j)I(X_1\not\in K)\right]\ + \\
&\qquad\qquad\qquad\qquad\qquad\qquad\qquad  E_x\left[\sum_{j=T}^{T_K-1} w(X_j)I(T<T_K)\right],
\end{align*}
where $a\land b=\min(a,b)$ for $a,b\in \R$. The second term on the right-hand side is given by
\begin{align}
\sum_{y\in A'}^{}P(x,y)&\left[w(y)+ \sum_{y_1\in A'}^{}P(y,y_1)w(y_1) + \sum_{n=2}^{\infty} \sum_{\substack{y_1,...,y_n\\y_i\in A'}}^{}P(y,y_1)...P(y_{n-1},y_n)w(y_n)\right]\nonumber\\
&= (P_{12}w_2)(x) + \sum_{n=1}^{\infty}\left(P_{12}P_{22}^nw_2\right)(x)\nonumber\\
&=\sum_{n=0}^{\infty}\left(P_{12}P_{22}^nw_2\right)(x).\label{eq::sum_w_until_Tk_min_T_linear_system}
\end{align}
As for the third term, it equals
\begin{align}
E_x&\sum_{j=T}^{T_K-1} w(X_j)I(T=1) + E_x\sum_{j=T}^{T_K-1}w(X_j)I(1<T<T_k)\nonumber\\
&= \sum_{y\in A^c}^{}P(x,y)E_y\sum_{j=0}^{T_K-1}w(X_j) \ \ +\sum_{y\in A^c}^{}P_x(X_T=y, 1<T<T_K) E_y\sum_{j=0}^{T_K-1}w(X_j).\label{eq::rest_of_sum_of_w_linear_algebra}
\end{align}
Note that for $z\in K^c$, 
\begin{align*}
E_z\sum_{j=0}^{T_K-1}w(X_j) &= w(z) + \sum_{y\in K^c}^{}P(z,y)w(y) + \sum_{n=2}^{\infty}\sum_{\substack{y_1,...,y_n\\ y_i\not\in K}}^{}P(z,y_1)...P(y_{n-1},y_n)w(y_n)\\
&=\sum_{n=0}^{\infty}(B^nw)(z).
\end{align*}
On the other hand, for $x\in K$ and $y\in A^c$
\begin{align*}
P_x(X_T=y, 1<T<T_K) &= \sum_{z\in A'}^{}P(x,z)\biggr[P(z,y) + \sum_{z_1\in A'}^{}P(z,z_1)P(z_1,y)\ + \\
&\qquad\qquad\qquad \sum_{n=2}^{\infty}\sum_{\substack{z_1,...,z_n\\ z_i\in A'}}^{}P(z,z_1)...P(z_{n-1},z_n)P(z_n,y)\biggr]\\
&= \sum_{n=0}^{\infty}(P_{12}P_{22}^nP_{23})(x,y).
\end{align*}
It follows that
\begin{align}
\sum_{y\in A^c}^{}P(x,y)E_y \sum_{n=0}^{T_K-1}w(X_j) = \left(P_{13}\eta(\bar w)\right)(x)\label{eq::jump_into_Ac_and_sum_until_return_to_K_linear_algebra}
\end{align}
and
\begin{align}
\sum_{y\in A^c}^{}P_x(X_T=y, 1<T<T_K)E_y\sum_{n=0}^{T_K-1}w(X_j) = \sum_{n=0}^{\infty}(P_{12}P_{22}^n P_{23}\eta(\bar w))(x).\label{eq::jump_into_Aprime_and_sum_until_return_to_K_linear_algebra}
\end{align}
Finally, we note that for $x\in A'$,
\begin{align*}
\sum_{y\in A'}^{}P_{22}^n(x,y) = P_x(T\land T_K>n)\rightarrow 0
\end{align*}
as $n\rightarrow\infty$, so $P_{22}^n\rightarrow 0$ as $n\rightarrow\infty$ (by irreducibility and recurrence). Since $|A'|<\infty$, it follows that $(I-P_{22})^{-1}$ exists and equals $\sum_{n=0}^\infty P_{22}^n$ (see, for example, Lemma B.1 in \citet{senetaNonnegativeMatricesMarkov2006}). Consequently, \cref{eq::sum_w_until_Tk_min_T_linear_system} through \cref{eq::jump_into_Aprime_and_sum_until_return_to_K_linear_algebra} then yield the theorem.
\end{proof}

\begin{remark} We note that for $x\in K^c$, 
\begin{align*}
\sum_{y\in K^c}^{}B^n(x,y) = P_x(T_K>n)\rightarrow 0
\end{align*}
as $n\rightarrow\infty$ (by irreducibility and recurrence). However, since $B$ can be an infinite state matrix, we avoid writing $\sum_{n=0}^{\infty}B^n= (I-B)^{-1}$, to avoid technical questions related to the sense in which $I-B$ is non-singular. As we shall see, our approximation and bounds do not require any exploration of these non-singularity issues for $I-B$.

With \cref{thm::linear_algebra_representation_for_sum_of_rewards} in hand, note that when $K=\{z\}$, a singleton, then $\pi_K(z)=1$ and we can approximate $\pi r$ via
\begin{align}
\tilde \pi(r) \overset{\Delta}{=} \frac{r_1(z)+(P_{12}(I-P_{22})^{-1}r_2)(z)}{1+(P_{12}(I-P_{22})^{-1} e_2)(z)}=\frac{E_z\sum_{j=0}^{(T\land T_K)-1}r(X_j)}{E_zT\land T_K}.\label{eq::tilde-pir-singleton-case}
\end{align}
On the other hand, if $K$ is not a singleton, we need to approximate $\pi_K$. For $x,y\in K$, we note that 
\begin{align*}
P_K(x,y) &\overset{\Delta}{=} P_x(X_{T_K}=y, T_K<T) + P_x(X_{T_k}=y, T<T_K).
\end{align*}
We shall denote the first term on the right-hand side by $G(x,y)$. Note that
\begin{align*}
G(x,y)&=P(x,y) + \sum_{z\in A'}^{}P(x,z)P(z,y) + \sum_{z\in A',z_1\in A'}^{}P(x,z)P(z,z_1)P(z_1,y) \ + \\
&\qquad\qquad \sum_{n=2}^{\infty}\sum_{\substack{z, z_1,...,z_n\\ z\in A', z_i\in A'}}^{}P(x,z)P(z,z_1)... P(z_{n-1},z_n)P(z_n,y)\\
&= P(x,y) + \sum_{n=0}^{\infty}(P_{12}P_{22}^nP_{21})(x,y)
\end{align*}
so that
\begin{align}
G = P_{11} + P_{12}(I-P_{22})^{-1}P_{21}.\label{eq::computation-of-G-linear-algebra}
\end{align}
Clearly, $G$ is not stochastic, but is a good approximation to $P_K$ when $A$ is large. The following assumption will be in force in the remainder of this paper.  
\begin{assumption} Assume that the sets $K$ and $A$ have been chosen so that $G$ is irreducible. \label{assumption::G-irreducible}
\end{assumption}

In the presence of \cref{assumption::G-irreducible}, we now ``stochasticize'' $G$ in such a way as to preserve its irreducibility to obtain $\tilde G$; we will suggest two such methods shortly. With the irreducible stochasticized $\tilde G$, let $\tilde \pi_K$ be its associated (unique) equilibrium distribution. We then obtain the approximation
\begin{align}
\tilde \pi(r) = \frac{\tilde \pi_K\left[r_1 + P_{12}(I-P_{22})^{-1}r_2\right]}{\tilde \pi_K\left[e_1 + P_{12}(I-P_{22})^{-1}e_2\right]}=\frac{E_{\tilde \pi_K}\sum_{j=0}^{(T\land T_K)-1}r(X_j)}{E_{\tilde \pi_K}T\land T_K}\label{eq::tilde-pir-general-K}.
\end{align}

Hence, we have fully described our approximation to $\pi r$, once we specifically describe our methods for stochasticizing $G$. 
\end{remark}

\emph{The Perron-Frobenius approach:} Under \cref{assumption::G-irreducible}, $G$ has a unique positive eigenvalue $\lambda$ and associated positive row and column eigenvectors $\nu$ and $h$ respectively such that
\begin{align}
\sum_{x\in K}^{}\nu(x)h(x)=1.\label{eq::h-nu-integrates-to-1}
\end{align}
Since $K$ is of modest size, we can assume that $\lambda, \nu,$ and $h$ are computable. We now put
\begin{align*}
P_1(x,y) = \frac{G(x,y)h(y)}{\lambda h(x)}
\end{align*}
and note that $P_1=(P_1(x,y):x,y\in K)$ is a stochastic and irreducible matrix. Furthermore, if we set $\pi_1(x)=\nu(x)h(x)$, it is easily verified, in the presence of \cref{eq::h-nu-integrates-to-1}, that $\pi_1=(\pi_1(x):x\in K)$ is the unique equilibrium distribution of $P_1$. We can now put $\tilde \pi_K=\pi_1$ in \cref{eq::tilde-pir-general-K}, thereby yielding our first approximation $\tilde\pi_1(r)$ to $\pi r$. We note that the use of Perron-Frobenius normalization in the context of truncation is also discussed in Section 6.5 in \citet{senetaNonnegativeMatricesMarkov2006}.

\emph{The row-normalized approach:} For $x\in K$, let
\begin{align*}
n(x) = \sum_{y\in K}^{}G(x,y)
\end{align*}
be the row sum corresponding to $x$. Set
\begin{align*}
P_2(x,y) = \frac{G(x,y)}{n(x)}
\end{align*}
and note that $P_2=(P_2(x,y):x,y\in K)$ is an irreducible stochastic matrix. Hence, $P_2$ has a unique equilibrium distribution $\pi_2=(\pi_2(x):x\in K)$. By putting $\tilde \pi_K=\pi_2$, this yields a second approximation to $\pi_K$, and hence a second approximation $\tilde\pi_2(r)$ to $\pi r$ in view of \cref{eq::tilde-pir-general-K}.

For a non-negative function $w$, let
\begin{align*}
\kappa_i(w) = E_{\pi_i}\smashoperator{\sum_{j=0}^{(T_K\land T)-1}} w(X_j)
\end{align*}
for $i=1,2$. Then, the $i$'th approximation to $\pi r$ is given by $\tilde \pi_i(r)=\kappa_i(r)/\kappa_i(e)$. 

We now argue that both of our approximations to $\pi r$ are convergent under the sole condition that $X$ is irreducible and positive recurrent. 

\begin{proposition}\label{prop_convergence}
Suppose we fix the inner (finite) subset $K$ and consider a sequence $(A_n:n\geq 1)$ of finite subsets for which $A_n\supseteq K$ and $A_n\nearrow S$ as $n\rightarrow\infty$. Then $\tilde\pi_i(r)\rightarrow\pi r$ as $n\rightarrow\infty$ for $i=1,2$.
\end{proposition}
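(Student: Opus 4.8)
The plan is to prove the result first for non-negative $r$ with $\pi r<\infty$, and then to extend to general $r$ with $\pi\lvert r\rvert<\infty$ by splitting $r=r^+-r^-$; this reduction is legitimate because $\kappa_i(\cdot)$ is linear in its argument, so that $\tilde\pi_i(r)=(\kappa_i(r^+)-\kappa_i(r^-))/\kappa_i(e)$. Writing $\pi r=\kappa(r)/\kappa(e)$ and $\tilde\pi_i(r)=\kappa_i(r)/\kappa_i(e)$, and recalling that $\kappa(e)=E_{\pi_K}T_K=1/\pi(K)\in(0,\infty)$ by positive recurrence, it suffices to establish that $\kappa_i(w)\to\kappa(w)$ as $n\to\infty$ for each fixed non-negative $w$ with $\kappa(w)<\infty$ (to be applied to $w=r$ and $w=e$). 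To this end I would write, with $T=T_n$ denoting the exit time from $A_n$,
\begin{align*}
\kappa_i(w)=\sum_{x\in K}\pi_i^{(n)}(x)\,u_n(x),\qquad u_n(x)=E_x\sum_{j=0}^{(T_K\land T_n)-1}w(X_j),
\end{align*}
so that, since $K$ is finite, it is enough to show the two convergences $u_n(x)\to u(x):=E_x\sum_{j=0}^{T_K-1}w(X_j)$ and $\pi_i^{(n)}(x)\to\pi_K(x)$ for each $x\in K$; the finite sum of products then converges to $\sum_{x\in K}\pi_K(x)u(x)=\kappa(w)$.

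For the convergence of the inner expectations $u_n(x)$, I would exploit monotonicity. Since $A_n\nearrow S$ the sets are increasing, hence the exit times $T_n$ are non-decreasing in $n$; moreover, for every fixed sample path only finitely many states are visited before any fixed time, so $A_n$ eventually contains each of them and $T_n\to\infty$ almost surely. By recurrence $T_K<\infty$ a.s., so $T_K\land T_n\nearrow T_K$ pointwise. As $w\ge 0$, the truncated reward $\sum_{j=0}^{(T_K\land T_n)-1}w(X_j)$ increases monotonically to $\sum_{j=0}^{T_K-1}w(X_j)$, and the monotone convergence theorem gives $u_n(x)\to u(x)$. The limit is finite for each $x\in K$ because $\kappa(w)=\sum_{x\in K}\pi_K(x)u(x)<\infty$ while $\pi_K(x)>0$.

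The convergence $\pi_i^{(n)}\to\pi_K$ is where the main work lies. I would first argue that $G^{(n)}\to P_K$ entrywise: recalling $G^{(n)}(x,y)=P_x(X_{T_K}=y,\,T_K<T_n)$ and $P_K(x,y)=P_x(X_{T_K}=y)$, the events $\{T_K<T_n\}$ increase to $\{T_K<\infty\}$, a probability-one event, so the indicators increase to $I(X_{T_K}=y)$ and bounded convergence yields $G^{(n)}(x,y)\to P_K(x,y)$. Since $K$ is finite this is convergence of a finite matrix; note also that the support of $G^{(n)}$ eventually contains that of the irreducible matrix $P_K$, so $G^{(n)}$ is irreducible for all large $n$ (consistent with \cref{assumption::G-irreducible}) and the approximations are well defined along the tail of the sequence. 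For the row-normalized case $i=2$, the row sums of $G^{(n)}$ converge to those of the stochastic matrix $P_K$, namely to $1$, so $P_2^{(n)}\to P_K$ entrywise; the stationary distribution of a finite irreducible stochastic matrix is a continuous function of its entries, whence $\pi_2^{(n)}\to\pi_K$. For the Perron--Frobenius case $i=1$, I would observe that $\pi_1^{(n)}(x)=\nu^{(n)}(x)h^{(n)}(x)$ is exactly the $x$-th diagonal entry of the spectral projection onto the (simple) Perron eigenspace of $G^{(n)}$; this projection depends continuously on the matrix entries near an irreducible matrix, and for $P_K$ it equals $e\,\pi_K$, whose diagonal is $\pi_K$. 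Hence $\pi_1^{(n)}\to\pi_K$ as well.

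Combining the two convergences inside the finite sum over $K$ gives $\kappa_i(w)\to\kappa(w)$ for $w\in\{r,e\}$, and therefore $\tilde\pi_i(r)=\kappa_i(r)/\kappa_i(e)\to\kappa(r)/\kappa(e)=\pi r$, the denominator being bounded away from $0$ since $\kappa(e)>0$. The main obstacle is the continuity of the Perron--Frobenius data (and of the stationary map) under the perturbation $G^{(n)}\to P_K$, together with verifying that irreducibility is not lost in the limit; the path-level argument for $u_n(x)$ and the interchange of limit and (finite) summation over $K$ are routine once these are in hand.
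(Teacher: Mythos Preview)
Your proof is correct and follows essentially the same approach as the paper's: monotone convergence for the inner expectations $u_n(x)\to u(x)$, together with $\pi_i^{(n)}\to\pi_K$ obtained from $G^{(n)}\to P_K$ and continuity of the equilibrium distribution of a finite irreducible matrix under perturbation (the paper invokes Schweitzer (1968) for this step). The only cosmetic difference is your handling of $i=1$ via continuity of the Perron spectral projection rather than first arguing $P_1^{(n)}\to P_K$ and then applying stationary-distribution continuity; these are equivalent packagings of the same perturbation fact.
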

\begin{proof}
In this case, the random variable $T$ depends on $n$ (i.e. $T=T_n=\inf\{j\geq 1: X_j\in A_n^c\}$), and $T_K\land T_n\nearrow T_K$ a.s. as $n\rightarrow\infty$. It follows from the Monotone Convergence Theorem that
\begin{align}
E_x\smashoperator{\sum_{j=0}^{(T_n\land T_K)-1}}w(X_j)\nearrow E_x\sum_{j=0}^{T_K-1}w(X_j)\label{eq::Monotone-Convergence-Theorem-on-sum-of-w}
\end{align}
as $n\rightarrow\infty$. Similarly, the Monotone Convergence Theorem implies that $G_n = (G_n(x,y):x,y\in K)$ converges to $P_K$. Since $P_i$ and $P_K$ are irreducible, and $P_i$ converges to $P_K$ as $n\rightarrow\infty$, the equilibrium distribution of $P_i$ converges to that of $P_K$ as $n\rightarrow\infty$; see p.404 of \citet{schweitzerPerturbationTheoryFinite1968}. We therefore conclude from \cref{eq::Monotone-Convergence-Theorem-on-sum-of-w} that $\kappa_i(w)\rightarrow \kappa(w)$ as $n\rightarrow\infty$ for each non-negative $w$, thereby implying that $\tilde \pi_i(r)\rightarrow \pi r$ as $n\rightarrow\infty$ for $i=1,2$.
\end{proof}

\section{Computable Bounds for Equilibrium Expectations\label{sec::EqExpBounds}}
In this section, we derive computable lower and upper bounds on $\pi r$. We start by noting that if
\begin{align*}
\kappa(x,r) = E_x\sum_{j=0}^{T_K-1}r(X_j)
\end{align*}
for $x\in K$ and $r$ non-negative, then we obtain an easy lower bound for $\kappa(x,r)$ via 
\begin{align*}
\kappa(x,r)\geq \utilde\kappa(x,r),
\end{align*}
where 
\begin{align*}
\utilde \kappa(x,r) &= E_x\sum_{j=0}^{(T_K\land T)-1} r(X_j)\\
&=r_1(x) + \left(P_{12}(I-P_{22})^{-1}r_2\right)(x).
\end{align*}
To obtain an upper bound $\tilde \kappa(x,r)$ satisfying 
\begin{align*}
\kappa(x,r)\leq \tilde\kappa(x,r),
\end{align*}
it is evident from \cref{eq::block_linear_system} that we need upper bounds on $P_{13}\eta(\bar r)$ and $P_{23}\eta(\bar r)$. Recall that for $x\in A^c$,
\begin{align*}
(\eta(\bar r))(x) = \sum_{j=0}^{\infty}(B^j\bar r)(x),
\end{align*}
so $\eta(\bar r)$ involves the (potentially) infinite dimensional matrix $B$. Since direct numerical computations with $B$ are infeasible, we shall assume the existence of Lyapunov bounds (as generated by the analyst) to control the magnitude of $\eta(\bar r)$ and $\eta(\bar e)$. In particular, we shall assume:

\begin{assumption}\label{assumption::Lyapunov1} There exist known non-negative functions $g_1:K^c\rightarrow\R_+, g_2:K^c\rightarrow\R_+, h_1:A\rightarrow\R_+$ and $h_2:A\rightarrow \R_+$ such that 
\begin{subequations}
\begin{align}
\sum_{y\in K^c}^{}P(x,y)g_1(y)&\leq g_1(x)-r(x)\label{eq::Lyapunov-for-r},\\
\sum_{y\in K^c}^{}P(x,y)g_2(y)&\leq g_2(x)-e(x)\label{eq::Lyapunov-for-e}
\end{align}
\label{eq:LyapGlobal}
\end{subequations}
for $x\in K^c$, and 
\begin{align}
\sum_{y\in A^c}^{}P(x,y)g_i(y)\leq h_i(x)\label{eq::Pg-bound}
\end{align}
for $x\in A, i=1,2$. 
\end{assumption}

\begin{remark} The functions $g_1$ and $g_2$ are called (stochastic) Lyapunov functions associated with $\bar r$ and $\bar e$, respectively. We note that a sufficient condition for \cref{eq:LyapGlobal} involves extending the domain of the $g_i$'s to be the entire state space $S$, and then verifying that $(Pg_1)(x)\leq g_1(x)-r(x)$ and $(Pg_2)(x)\leq g_2(x)-1$ for $x\in K^c$. Typically, one first verifies these inequalities analytically by analyzing $Pg_i$ for $x$ sufficiently large, after which one can choose $K$ as the complement of the set for which the inequalities are known to hold. We discuss the construction of $K$ in greater detail in \cref{sec::NumericalExperiments}.
\end{remark}

The following result is well known (see, for example, \citet{meynMarkovChainsStochastic2009}, p.344), but we give a short proof to make our presentation as self-contained as possible.

\begin{proposition} \label{prop-Lyapunov} When $r$ is non-negative, \cref{eq::Lyapunov-for-r} implies that
\begin{align*}
\left(\eta(\bar r)\right)(x) \leq g_1(x)
\end{align*}
for $x\in A^c$. 
\end{proposition}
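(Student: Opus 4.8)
The plan is to recognize $\eta(\bar r)$ as an excursion expectation and then bound it with the standard supermartingale (comparison) argument driven by the drift inequality \cref{eq::Lyapunov-for-r}. First I would recall the computation inside the proof of \cref{thm::linear_algebra_representation_for_sum_of_rewards}: for $z\in K^c$ we have $\sum_{n=0}^{\infty}(B^n\bar r)(z)=E_z\sum_{j=0}^{T_K-1}r(X_j)$, and restricting to the $A^c$ block of $\sum_{n=0}^{\infty}B^n\bar r$ identifies
\begin{align*}
\left(\eta(\bar r)\right)(x)=E_x\sum_{j=0}^{T_K-1}r(X_j),\qquad x\in A^c.
\end{align*}
Thus it suffices to prove $E_x\sum_{j=0}^{T_K-1}r(X_j)\leq g_1(x)$ for every $x\in K^c$, of which $x\in A^c$ is a special case.

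The core of the argument is a non-negative supermartingale. Fixing $x\in K^c$, I would define, for $n\geq 0$,
\begin{align*}
M_n = g_1(X_n)\,I(T_K>n) + \sum_{j=0}^{(n\wedge T_K)-1}r(X_j),
\end{align*}
noting that $M_0=g_1(x)$, that $M_n\geq 0$ since $g_1,r\geq 0$, and that the indicator guarantees $g_1$ is only ever evaluated on $K^c$ (so the restricted domain of $g_1$ causes no issue). The key step is to verify that $(M_n)$ is a supermartingale under $P_x$. On $\{T_K\leq n\}$ the process has frozen, so $M_{n+1}=M_n$. On $\{T_K>n\}$ one has $X_n\in K^c$, and a short case check (splitting on whether $X_{n+1}\in K$ or $X_{n+1}\in K^c$) gives $M_{n+1}=g_1(X_{n+1})I(X_{n+1}\in K^c)+\sum_{j=0}^{n}r(X_j)$; conditioning on $\mathcal{F}_n$ and invoking the Markov property together with \cref{eq::Lyapunov-for-r} yields
\begin{align*}
E_x\!\left[M_{n+1}\mid\mathcal{F}_n\right]=\sum_{y\in K^c}P(X_n,y)g_1(y)+\sum_{j=0}^{n}r(X_j)\leq \big(g_1(X_n)-r(X_n)\big)+\sum_{j=0}^{n}r(X_j)=M_n.
\end{align*}
Taking expectations then gives $E_x M_n\leq M_0=g_1(x)$ for all $n$, hence $E_x\sum_{j=0}^{(n\wedge T_K)-1}r(X_j)\leq g_1(x)$, and since $T_K<\infty$ a.s.\ by irreducibility and recurrence, the Monotone Convergence Theorem lets $n\to\infty$ to conclude $E_x\sum_{j=0}^{T_K-1}r(X_j)\leq g_1(x)$, which is the claim.

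The hard part, such as it is, is the bookkeeping for the stopped reward-plus-potential process $M_n$: one must apply the drift inequality exactly on $\{T_K>n\}$ (where indeed $X_n\in K^c$, so \cref{eq::Lyapunov-for-r} is legitimately available), confirm that the reward $r(X_n)$ collected at step $n$ cancels the slack in the drift, and ensure the frozen behaviour on $\{T_K\leq n\}$ is handled separately. Once the supermartingale property is correctly established, the remaining steps (taking expectations and passing to the monotone limit, using recurrence for $T_K<\infty$) are entirely routine.
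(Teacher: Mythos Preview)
Your argument is correct, but it takes a different route from the paper. The paper works purely with the sub-stochastic matrix $B$ acting on $K^c$: the Lyapunov inequality rewrites as $Bg_1\leq g_1-\bar r$, hence $B^k\bar r\leq B^kg_1-B^{k+1}g_1$, and a telescoping sum gives $\sum_{k=0}^{n}B^k\bar r\leq g_1-B^{n+1}g_1\leq g_1$; sending $n\to\infty$ yields the bound directly on $\eta(\bar r)$. Your approach instead first identifies $\eta(\bar r)(x)$ with the excursion expectation $E_x\sum_{j=0}^{T_K-1}r(X_j)$ and then runs the standard reward-plus-potential supermartingale $M_n$ to bound it. The two are dual formulations of the same comparison principle: the paper's telescoping identity is exactly what one gets by taking expectations of your $M_n$ and iterating. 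The algebraic version is slightly shorter and sidesteps any integrability bookkeeping for $M_n$ (which in your version is handled by the induction $E_xM_0=g_1(x)<\infty\Rightarrow E_xM_n<\infty$); your probabilistic version has the advantage of making transparent why the bound holds for every $x\in K^c$, not just $x\in A^c$, and connects more directly to the excursion interpretation used elsewhere in the paper.
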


\begin{proof} Note that the Lyapunov inequality implies that 
\begin{align}
(Bg_1)(x)\leq g_1(x)-\bar r(x)\label{eq::Lyapunov-in-terms-of-B}
\end{align}
for $x\in K^c$. It follows that $Bg_1\leq g_1$. Because $B$ is non-negative, this implies that $B^{n+1}g_1\leq B^ng_1$ for $n\geq 0$, so that $B^ng_1\leq g_1$. Consequently, $B^ng_1$ is finite-valued for $n\geq 0$. 

The inequality \cref{eq::Lyapunov-in-terms-of-B} implies that
\begin{align*}
\bar r \leq g_1-Bg_1
\end{align*}
and hence
\begin{align}
B^k\bar r \leq B^kg_1-B^{k+1}g_1\label{eq::B-to-the-K-inequality}
\end{align}
for $k\geq 0$. Summing \cref{eq::B-to-the-K-inequality} over $0\leq k\leq n$, we conclude that
\begin{align*}
\sum_{k=0}^{n}B^k\bar r\leq g_1-B^{n+1}g_1\leq g_1.
\end{align*}
Sending $n\rightarrow\infty$, we obtain the result.
\end{proof}

If we write the function $h_1$ as a partitioned column vector $h_1^T=(h_{11}^T,h_{12}^T)$, we find that
\begin{align*}
\left(P_{13}\eta(\bar r)\right)(x) &= \sum_{y\in A^c}^{}P(x,y)(\eta(\bar r))(y)\\
&\leq \sum_{y\in A^c}^{}P(x,y)g_1(y)\\
&\leq h_{11}(x)
\end{align*}
for $x\in K$, and 
\begin{align*}
\left(P_{23}\eta(\bar r)\right)(x) &= \sum_{y\in A^c}^{}P(x,y)(\eta(\bar r))(y)\\
&\leq \sum_{y\in A^c}^{}P(x,y)g_{1}(y)\\
&\leq h_{12}(x)
\end{align*}
for $x\in A'$. Hence, in the presence of the Lyapunov bound, we find that
\begin{align*}
\kappa(x,r)\leq \tilde \kappa_1(x,r), 
\end{align*}
where
\begin{align*}
\tilde \kappa_1(x,r) = \utilde \kappa(x,r) + h_{11}(x) + (P_{12}(I-P_{22})^{-1}h_{12})(x).
\end{align*}

\begin{remark} The optimal choice of the function $h_1$ and the Lyapunov function $g_1$ is attained when \cref{eq::Lyapunov-for-r} and \cref{eq::Pg-bound} hold with equality. In that case
\begin{align*}
g_1(x) = E_x \sum_{j=0}^{T_K-1}r(X_j)
\end{align*}
for $x\in K^c$,
\begin{align*}
h_{11}(x) &= E_x \sum_{j=1}^{T_K-1}r(X_j)I(X_1\in A^c),
\end{align*}
for $x\in K$, and 
\begin{align*}
h_{12}(x) &= E_x \sum_{j=T}^{T_K-1} r(X_j)I(X_1\in A', T<T_K)
\end{align*}
for $x\in A'$. With $g_1$ and $h_1$ so selected, 
\begin{align*}
\kappa(z,r) = \tilde \kappa_1(z,r), 
\end{align*}
so our upper bound on $\kappa(r)$ is exact. We therefore expect that our bounds can be quite tight when $g_1$ and $h_1$ are chosen well.
\end{remark}

If $K=\{z\}$ (i.e. $K$ is a singleton), we therefore have both upper and lower bounds on $\kappa(r)$, namely
\begin{align*}
\utilde \kappa(z,r)\leq \kappa(r)\leq \tilde \kappa_1(z,r). 
\end{align*}

The quantity $\kappa(e)$ can be similarly bounded via the functions $g_2$ and $h_2$. In particular, 
\begin{align*}
\utilde \kappa(x,e) \leq \kappa(x,e) \leq \tilde \kappa_2(x,e),
\end{align*}
where 
\begin{align*}
 \tilde\kappa_2(x,e) = \utilde \kappa(x,e) + h_{21}(x) + (P_{12}(I-P_{22})^{-1}h_{22})(x),
 \end{align*}
 with $h_2^T=(h_{21}^T, h_{22}^T)$. Hence if $K=\{z\}$, we obtain our desired bounds on $\pi r$, namely
\begin{align}
\frac{\utilde \kappa(z,r)}{\tilde \kappa_2(z,e)} \leq \pi r\leq \frac{\tilde \kappa_1(z,r)}{\utilde \kappa(z,e)}.\label{eq::singleton-bounds}
\end{align}

\begin{remark}\label{remark_single_pair_rem3} Note that if we find $g$ and $h$ for which
\begin{align*}
\sum_{y\in K^c}^{}P(x,y)g(y) \leq g(x)-(r(x)\lor 1)
\end{align*}
for $x\in K^c$, and
\begin{align*}
\sum_{y\in A^c}^{} P(x,y)g(y)\leq h(x)
\end{align*}
for $x\in A$, then we can set $g_1=g_2=g$ and $h_1=h_2=h$ in $\cref{eq:LyapGlobal}$ and $\cref{eq::Pg-bound}$. Of course, this choice, while easier to implement, will typically give worse bounds than obtained with tailored choices for the $g_i$'s and $h_i$'s.
\end{remark}

\begin{remark} When $X$ is irreducible and geometrically ergodic, there typically exists a Lyapunov function $v:S\rightarrow[1,\infty), \alpha<1$, and $c_v\in\R_+$ such that
\begin{align*}
\sum_{y\in S}^{}P(x,y)v(y)\leq \alpha v(x)+c_vI(x\in K)
\end{align*}
for $x\in S$; see \citet{meynMarkovChainsStochastic2009}, p.393. In this case, we can take $g_1=g_2=v$ on $K^c$, provided that $r(x)\leq (1-\alpha) v(x)$ for $x\in S$.
\end{remark}

We now turn to obtaining bounds on $\kappa(r)$ and $\kappa(e)$ when $K$ is not a singleton. Such an extension is important, because the relaxation of $K$ to non-singleton sets makes it significantly easier to construct the Lyapunov functions $g_1$ and $g_2$. Note that the contribution of the equilibrium probabilities $(\pi_K(x):x\in K)$ to $\kappa(r)$ and $\kappa(e)$ now needs to be explicitly considered, in order to generate upper and lower bounds on $\pi r$. In the remainder of this section, we show how to compute such bounds. 

In particular, note that $\pi_K\in \mathcal{P}$, where $\mathcal{P}$ is the set of probabilities $\phi$ on $K$ (encoded as row vectors) such that
\begin{align}
\phi(y) = \sum_{x\in K}^{}\phi(x)\rho(x,y)\label{eq::phi-is-stationary-distribution-of-rho}
\end{align}
for $y\in K$ for some stochastic matrix $(\rho(x,y):x,y\in K)$, where
\begin{align}
\rho(x,y)\geq G(x,y)\label{eq::G-lower-bounds-rho}
\end{align}
for $x,y\in K$. We can now obtain upper and lower bounds on $\kappa(r)$ via
\begin{align}
\min_{\phi\in \mathcal{P}}\phi\utilde\kappa(r)\leq \kappa(r)\leq \max_{\phi\in \mathcal{P}}\phi\tilde\kappa_1(r),\label{eq::LP-bounds-on-sums-of-rewards}
\end{align}
where, for a non-negative function $w$, $\tilde\kappa_i(w)$ and $\utilde\kappa(w)$ are the column vectors $(\tilde\kappa_i(x,w):x\in K)$ and $(\utilde\kappa(x,w):x\in K)$, respectively. 

It turns out that there is a powerful vehicle available for characterizing $\mathcal{P}$ that allows us to easily compute the bounds appearing in \eqref{eq::LP-bounds-on-sums-of-rewards}. In particular, for $x,y\in K$, let
\begin{align}
\tau_x(y) = \frac{(I-G)^{-1}(x,y)}{\sum_{w\in K}^{}(I-G)^{-1}(x,w)},\label{eq::ImG-distributions}
\end{align}
so that $\tau_x=(\tau_x(y):y\in K)$ is a probability on $K$, for each $x\in K$. \citet{courtoisBoundsPositiveEigenvectors1984} establish that $\mathcal{P}$ is identical to the family of mixtures of the $\tau_x$'s, so that
\begin{align}
\mathcal{P} = \{\phi: \phi = \sum_{x\in K}^{}\lambda(x) \tau_x \text{ where } (\lambda(x):x\in K) \text{ is a probability on $K$}\};\label{eq::CS-representation}
\end{align}
see their Theorem 2 and also \citet{https://doi.org/10.48550/arxiv.2208.03446} for a generalization to continuous state space. In view of \eqref{eq::CS-representation}, it follows that
\begin{align*}
\max_{\phi \in \mathcal{P}}\phi \tilde \kappa_1(r) = \max_{x\in K} \tau_x\tilde \kappa_1(r)
\end{align*}
and 
\begin{align*}
\min_{\phi\in\mathcal{P}} \phi \utilde \kappa(r) = \min_{x\in K}\tau_x\utilde\kappa (r).
\end{align*}
Similarly to \eqref{eq::singleton-bounds}, we are then led to the following lower and upper bounds on $\pi r$:
\begin{align}
\frac{\min_{x\in K}\tau_x \utilde\kappa(r)}{\max_{x\in K}\tau_x \tilde\kappa_x(e)} \leq \pi r\leq \frac{\max_{x\in K}\tau_x \tilde \kappa_1(r)}{\min_{x\in K} \tau_x \utilde \kappa(e)}.\label{eq::CS-lower-and-upper-bounds}
\end{align}
Since our approximation $\tilde \pi_i(r)$ from \cref{sec::Approximation} satisfies
\begin{align*}
\tilde\pi_i(r) = \frac{\pi_i \utilde \kappa(r)}{\pi_i \utilde \kappa(e)},
\end{align*}
it is evident that $\tilde \pi_2(r)$ is also bracketed by the same lower and upper bound as is $\pi r$ in \eqref{eq::CS-lower-and-upper-bounds}. (Note that $P_2$ is greater than or equal to $G$. Since $P_1$ can fail to be component wise greater than or equal to $G$, this bound may not be valid for $P_1$.) It is therefore evident that
\begin{align}
|\tilde \pi_2(r)- \pi r| \leq \frac{\max_{x\in K}\tau_x\tilde\kappa_1(r)}{\min_{x\in K}\tau_x\utilde \kappa(e)} - \frac{\min_{x\in K} \tau_x\utilde \kappa(r)}{\max_{x\in K}\tau_x\tilde\kappa_2(e)}.\label{eq::CS-approximation-error-bound}
\end{align}
Hence, we arrive both at lower and upper bounds on $\pi r$ itself, as well as a bound on the approximation error $|\tilde \pi_2(r)-\pi r|$. 

In the discussion of our numerical experiments in \cref{sec::NumericalExperiments}, we refer to the equilibrium expectation bound \eqref{eq::CS-lower-and-upper-bounds} as the ``minorization equilibrium expectation bound'' (because it relies on the fact that $G$ minorizes $P_K$, $G\leq P_K$).

\begin{remark} It can be shown, by example, that $P_1$ can fail to lie in $\mathcal{P}$. Consequently, the bound \eqref{eq::CS-approximation-error-bound} may fail to be valid. We consider a perturbation-based bound suitable for $\pi_1$ in the next section.
\end{remark}

\begin{remark} \label{remark::convergence_of_LP_bounds} We note that when we have a sequence $(A_n:n\geq 1)$ of truncation sets $A_n\nearrow S$ as $n\rightarrow\infty$ with $K$ fixed, our upper and lower bounds on $\pi r$ can be guaranteed to converge to $\pi r$ as $n\rightarrow\infty$ under suitable conditions. In that setting, $G_n \nearrow P_K$ as $n\rightarrow\infty$, and $\mathcal{P}$ decreases to $\{\pi_K\}$. Furthermore, the Monotone Convergence Theorem guarantees that $\utilde\kappa(x,r)\nearrow\kappa(x,r)$ as $n\rightarrow\infty$. We also observe that if $E_xg_i(X_1)<\infty$ for $x\in K$, then when $h_i$ can be computed exactly,
\begin{align*}
h_{i1}(x)=E_xg_i(X_1)I(X_1\in A_n^c)\rightarrow 0
\end{align*}
as $n\rightarrow\infty$ via the Dominated Convergence Theorem. Also, if $\pi g_i<\infty$, 
\begin{align*}
(P_{12}(I-P_{22})^{-1}h_{i2})(x) = E_x g_i(X_{T_n})I(T_n<T_K)\rightarrow 0
\end{align*}
as $n\rightarrow\infty$, since $g_i(X_{T_n})I(T_n<T_K)$ is then dominated by $\sum_{j=0}^{T_K-1}g_i(X_j)$, which is integrable, on account of \cref{eq::RegenerativeRepresentation-pir} and $\pi g_i<\infty$. This guarantees that $\tilde \kappa_1(z,r)\searrow \kappa(z,r)$ and $\tilde \kappa_2(z,e)\searrow \kappa(z,e)$ as $n\rightarrow\infty$.

We note that a simple sufficient condition for $\pi g_i<\infty$ is the existence of a further non-negative Lyapunov function $g'_i$ and $c_i'\in \R$ such that
\begin{align*}
(Pg_i')(x)\leq g_i'(x)-g_i(x)+c_i'
\end{align*}
for $x\in S$; see \citet{glynnBoundingStationaryExpectations2008}.
\end{remark}

\section{Computable Total Variation Error Bounds}\label{sec::TV-bounds}
In this section, we apply the ideas of \cref{sec::EqExpBounds} to the development of a computable upper bound on the total variation distance between the distribution associated with our approximation $\tilde \pi_i(r)$ and the exact underlying equilibrium distribution $\pi$. We start by noting that $\tilde \pi_i(\cdot)$ induces a probability $\pi_i^*$ on $S$ via the formula $\pi_i^*(x)=\tilde\pi_i(e_x)$ where $e_x$ is the non-negative function for which $e_x(y)$ is 1 if $y=x$ and 0 otherwise. The \emph{total variation norm} distance between $\pi_i^*$ and $\pi$ associated with the (non-negative) envelope function $r$ is then given by
\begin{align*}
\norm{\pi_i^*-\pi}_{r}&=\sup_{|f|\leq r}\left|\sum_{x\in S}^{}\pi_i^*(x)f(x)-\sum_{x\in S}^{}\pi(x)f(x)\right|\\
&=\sup_{|f|\leq r}\left|\pi_i^*f-\pi f\right|.
\end{align*}
For a function $f$ of mixed sign, we can express it as $f=f^+-f^-$, where $f^+(x)=\max(f(x),0)$ and $f^-(x)=\max(-f(x),0)$. Then,
\begin{align*}
\norm{\pi_i^*-\pi}_{r}&=\sup_{|f|\leq r}\left|\pi_i^*f^+-\pi f^+-(\pi_i^*f^--\pi f^-)\right|\\
&\leq 2\sup_{0\leq f\leq r}\left|\pi_i^*f-\pi f\right|.
\end{align*}
To bound the quantity, we start with the simplest setting, in which $K=\{z\}$, so that $\pi_i=\pi_K$ for $i=1,2$. Then, with $f$ non-negative, 
\begin{align*}
\pi f-\tilde\pi_i(f) &= \frac{\kappa(z,f)}{\kappa(z,e)} - \frac{\utilde\kappa(z,f)}{\utilde\kappa(z,e)}\\
&\leq \frac{\kappa(z,f)-\utilde \kappa(z,f)}{\utilde \kappa(z,e)}\\
&\leq \frac{\kappa(z,r)-\utilde\kappa(z,r)}{\utilde\kappa(z,e)}\\
&\leq \frac{\beta_1(z)}{\utilde\kappa(z,e)},
\end{align*}
where
\begin{align*}
\kappa(x,r)-\utilde\kappa(x,r)&\leq \tilde\kappa_1(x,r) - \utilde\kappa(x,r)\\
&\leq h_{11}(x)+(P_{12}(I-P_{22})^{-1}h_{12})(x)\\
&\overset{\Delta}{=}\beta_1(x)
\end{align*}
for $x\in K$.

Also,
\begin{align*}
\tilde\pi_i(f) -\pi f &= \frac{\utilde\kappa(z,f)\left[\kappa(z,e)-\utilde\kappa(z,e)\right]+\utilde\kappa(z,e)\left[\utilde\kappa(z,f)-\kappa(z,f)\right]}{\utilde\kappa(z,e)\kappa(z,e)}\\
&\leq \frac{\utilde\kappa(z,f)\left[\kappa(z,e)-\utilde\kappa(z,e)\right]}{\utilde\kappa(z,e)\kappa(z,e)}\\
&\leq \frac{\utilde\kappa(z,f)\left[\kappa(z,e)-\utilde\kappa(z,e)\right]}{\utilde\kappa(z,e)^2}\\
&\leq \frac{\utilde\kappa(z,r)\left[\kappa(z,e)-\utilde\kappa(z,e)\right]}{\utilde\kappa(z,e)^2}\\
&\leq \tilde\pi_i(r) \frac{\beta_2(z)}{\utilde\kappa(z,e)},
\end{align*}
where
\begin{align*}
\kappa(x,e) -\utilde\kappa(x,e) &\leq \tilde\kappa(x,e)-\utilde\kappa(x,e)\\
&\leq h_{21}(x)+\left(P_{12}(I-P_{22})^{-1}h_{22}\right)(x)\\
&\overset{\Delta}{=}\beta_2(x)
\end{align*}
for $x\in K$. 

We therefore obtain the following computable error bound on the weighted total variation distance between $\pi_i^*$ and $\pi$.

\begin{theorem}\label{thm:tv_bound_singleton} Suppose $K=\{z\}$ and assume that \cref{assumption::Lyapunov1} is in force. Then,
\begin{align*}
\norm{\pi_i^*-\pi}_{r} \leq 2\max\left(\frac{\beta_1(z)}{\utilde\kappa(z,e)}, \tilde\pi_i(r)\cdot\frac{\beta_2(z)}{\utilde\kappa(z,e)}\right).
\end{align*}
\end{theorem}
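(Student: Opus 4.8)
The plan is to reduce the weighted total variation distance to a supremum of $|\tilde\pi_i(f)-\pi f|$ over non-negative test functions, and then to apply---uniformly in $f$---the two one-sided estimates already assembled in the display chains immediately preceding the theorem statement. The reduction $\norm{\pi_i^*-\pi}_r \le 2\sup_{0\le f\le r}|\pi_i^*f-\pi f|$ is recorded in the text above; what remains is to identify $\pi_i^* f$ with $\tilde\pi_i(f)$ and to observe that the two chains hold for \emph{every} admissible $f$, with right-hand sides that do not depend on $f$.

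First I would establish the linearity identity $\pi_i^*f=\tilde\pi_i(f)$ for non-negative $f$. By definition $\pi_i^*(x)=\tilde\pi_i(e_x)=\kappa_i(e_x)/\kappa_i(e)$, and $\kappa_i(e_x)=E_{\pi_i}\sum_{j=0}^{(T_K\land T)-1}I(X_j=x)$. Since all summands are non-negative, Tonelli's theorem permits interchanging the sum over $x$ with the expectation, giving $\sum_x\kappa_i(e_x)f(x)=E_{\pi_i}\sum_{j=0}^{(T_K\land T)-1}f(X_j)=\kappa_i(f)$; dividing by $\kappa_i(e)$ yields $\pi_i^*f=\kappa_i(f)/\kappa_i(e)=\tilde\pi_i(f)$. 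Hence the distance is controlled by $2\sup_{0\le f\le r}|\tilde\pi_i(f)-\pi f|$.

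Next I would check that the two displayed chains are genuinely uniform in $f$. In the singleton case $\pi_i$ is the point mass at $z$, so $\pi f=\kappa(z,f)/\kappa(z,e)$ and $\tilde\pi_i(f)=\utilde\kappa(z,f)/\utilde\kappa(z,e)$. Every inequality in the first chain is either a positivity step (using $\kappa(z,e)\ge\utilde\kappa(z,e)>0$) or a monotonicity step driven by $0\le f\le r$ (namely $\kappa(z,f)-\utilde\kappa(z,f)\le\kappa(z,r)-\utilde\kappa(z,r)$), closed off by the $f$-free Lyapunov bound $\kappa(z,r)-\utilde\kappa(z,r)\le\beta_1(z)$; thus $\pi f-\tilde\pi_i(f)\le\beta_1(z)/\utilde\kappa(z,e)$. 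The second chain is analogous, with the monotonicity step $\utilde\kappa(z,f)\le\utilde\kappa(z,r)$ and the bound $\kappa(z,e)-\utilde\kappa(z,e)\le\beta_2(z)$, yielding $\tilde\pi_i(f)-\pi f\le\tilde\pi_i(r)\,\beta_2(z)/\utilde\kappa(z,e)$ once one recognizes $\utilde\kappa(z,r)/\utilde\kappa(z,e)=\tilde\pi_i(r)$.

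Combining these two one-sided estimates,
\[
|\tilde\pi_i(f)-\pi f|\le\max\!\left(\frac{\beta_1(z)}{\utilde\kappa(z,e)},\ \tilde\pi_i(r)\frac{\beta_2(z)}{\utilde\kappa(z,e)}\right)
\]
for all $0\le f\le r$, and taking the supremum over $f$ (the bound being $f$-independent) and multiplying by $2$ gives the theorem. No genuine analytic difficulty arises once the linearity identity is in place; the only step that truly needs care is verifying this uniformity---confirming that each inequality in the two chains invokes only $0\le f\le r$ or the $f$-free estimates $\kappa(z,r)-\utilde\kappa(z,r)\le\beta_1(z)$ and $\kappa(z,e)-\utilde\kappa(z,e)\le\beta_2(z)$ guaranteed by \cref{assumption::Lyapunov1} together with \cref{prop-Lyapunov}.
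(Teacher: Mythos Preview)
Your proposal is correct and follows essentially the same approach as the paper: the paper establishes the two one-sided bounds $\pi f-\tilde\pi_i(f)\le\beta_1(z)/\utilde\kappa(z,e)$ and $\tilde\pi_i(f)-\pi f\le\tilde\pi_i(r)\,\beta_2(z)/\utilde\kappa(z,e)$ in the display chains preceding the theorem, and the statement follows immediately by combining them with the reduction to non-negative test functions. Your explicit verification of the identity $\pi_i^*f=\tilde\pi_i(f)$ via Tonelli is a point the paper treats implicitly, so your write-up is if anything slightly more careful on that detail.
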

We now extend the bound of \cref{thm:tv_bound_singleton} to the setting in which $K$ is not a singleton. Let $\Delta_i = \sum_{x\in K}^{}|\pi_i(x)-\pi_K(x)|=\norm{\pi_i-\pi_K}_{1}$, where $\norm{u}_{1} \overset{\Delta}{=}\sum_{x\in K} |u(x)|$ for a generic row vector $u=(u(x):x\in K)$. In this case, for $f$ non-negative with $f\leq r$,
\begin{align*}
&\left|\tilde\pi_i(r)-\pi f\right| \\
&\quad= \left|\frac{\pi_i\utilde\kappa(f)}{\pi_i\utilde\kappa(e)}- \frac{\pi_K \kappa(f)}{\pi_K\kappa(e)}\right|\\
&\quad\leq \frac{|\pi_i \utilde\kappa(f) \left(\pi_K\utilde\kappa(e) + \pi_K (\kappa(e)-\utilde\kappa(e))\right) - \pi_i\utilde\kappa(e)\left(\pi_K\utilde\kappa(f) + \pi_K(\kappa(f)-\utilde\kappa(f)\right)}{\pi_i\utilde\kappa(e)\pi_K\kappa(e)}\\
&\quad= \biggr(|\pi_i\utilde\kappa(f)\left[\left(\pi_K-\pi_i\right)\kappa(e) + \pi_i\left(\kappa(e)-\utilde\kappa(e)\right)\right] \\
&\qquad\qquad -\pi_i\utilde\kappa(e) \left((\pi_K-\pi_i)\kappa(f)+\pi_i(\kappa(f)-\utilde\kappa(f))\right)\biggr)/(\pi_i\utilde\kappa(e)\pi_K\kappa(e))\\
&\quad\leq \frac{\pi_i\utilde\kappa(f)\left(\Delta_i \norm{\kappa(e)}_{\infty}+\pi_i\beta_2\right)}{\pi_i\utilde\kappa(e)\cdot\pi_K\kappa(e)} + \frac{\pi_i\utilde\kappa(e)\left(\Delta_i\norm{\kappa(f)}_{\infty}+\pi_i\beta_1\right)}{\pi_i\utilde\kappa(e)\cdot\pi_K\kappa(e)}\\
&\quad\leq \frac{\left(\pi_i\beta_1+\tilde\pi_i(r)\pi_i\beta_2\right)}{\ell_i(e)} + \Delta_i \left(\frac{\tilde\pi_i(r) \norm{\tilde\kappa_2(e)}_{\infty}+\norm{\tilde\kappa_1(r)}_{\infty}}{\ell_i(e)}\right)\\
&\quad\overset{\Delta}{=}\epsilon_i(r).
\end{align*}
We therefore arrive at our weighted total variation bound for the setting in which $K$ is not a singleton.
\begin{theorem}\label{thm:tv_bound_general}
When \cref{assumption::G-irreducible} and \cref{assumption::Lyapunov1} are in force, then
\begin{align*}
\norm{\pi_i^*-\pi}_{r} \leq 2\epsilon_i(r).
\end{align*}
\end{theorem}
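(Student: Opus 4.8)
The plan is to recognize that this theorem is essentially the packaging of the displayed chain of inequalities immediately preceding it, so the proof splits into three tasks: identifying $\pi_i^* f$ with $\tilde\pi_i(f)$, invoking the already-established reduction of the weighted total variation norm to a supremum over non-negative test functions, and verifying that the resulting per-$f$ bound is \emph{uniform} in $f$.

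First I would observe that the approximation functional $\tilde\pi_i$ is linear on non-negative arguments: its numerator $\pi_i\utilde\kappa(\cdot)$ is linear (since $\utilde\kappa(x,\cdot)$ is linear and $\pi_i$ is fixed) and its denominator $\pi_i\utilde\kappa(e)$ is a constant not depending on the argument. Hence from $\pi_i^*(x)=\tilde\pi_i(e_x)$ one gets $\pi_i^* f=\sum_{x}\tilde\pi_i(e_x)f(x)=\tilde\pi_i(f)$ for every non-negative $f$. Combined with the reduction $\norm{\pi_i^*-\pi}_r\leq 2\sup_{0\leq f\leq r}|\pi_i^* f-\pi f|$ established just before the statement, it then suffices to prove $|\tilde\pi_i(f)-\pi f|\leq\epsilon_i(r)$ uniformly over non-negative $f$ with $f\leq r$, since $\epsilon_i(r)$ carries no dependence on $f$.

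Next I would carry out the per-$f$ estimate exactly as in the preceding display. Writing $\tilde\pi_i(f)=\pi_i\utilde\kappa(f)/\pi_i\utilde\kappa(e)$ and, via \cref{eq::RegenerativeRepresentation-pir}, $\pi f=\pi_K\kappa(f)/\pi_K\kappa(e)$, I would place the difference over the common denominator $\pi_i\utilde\kappa(e)\,\pi_K\kappa(e)$ and regroup the numerator using $\pi_K\kappa(e)-\pi_i\utilde\kappa(e)=(\pi_K-\pi_i)\kappa(e)+\pi_i(\kappa(e)-\utilde\kappa(e))$ and the analogous identity for $f$. The first piece is controlled by the standard $\ell^1$-$\ell^\infty$ inequality $|(\pi_K-\pi_i)\kappa(e)|\leq\Delta_i\norm{\kappa(e)}_\infty$ with $\Delta_i=\norm{\pi_i-\pi_K}_1$, while the second piece is controlled componentwise by the Lyapunov gaps $\kappa(x,e)-\utilde\kappa(x,e)\leq\beta_2(x)$ and $\kappa(x,f)-\utilde\kappa(x,f)\leq\kappa(x,r)-\utilde\kappa(x,r)\leq\beta_1(x)$, the last step using $f\leq r$ and monotonicity of the reward functionals. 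To render the bound computable and free of $f$, I would then dominate $\tilde\pi_i(f)$ by $\tilde\pi_i(r)$ (monotonicity of $\tilde\pi_i$ on non-negatives), replace $\norm{\kappa(e)}_\infty$ and $\norm{\kappa(f)}_\infty$ by the computable upper bounds $\norm{\tilde\kappa_2(e)}_\infty$ and $\norm{\tilde\kappa_1(r)}_\infty$, and replace the non-computable factor $\pi_K\kappa(e)$ by a computable lower bound $\ell_i(e)$, valid because $\pi_K\in\mathcal{P}$ gives $\pi_K\kappa(e)\geq\pi_K\utilde\kappa(e)\geq\min_{x\in K}\tau_x\utilde\kappa(e)$. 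These substitutions yield precisely $\epsilon_i(r)$.

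The main obstacle is not the opening reduction but the uniformity of the per-$f$ estimate. Every $f$-dependent quantity, namely $\utilde\kappa(f)$, $\kappa(f)$, and $\tilde\pi_i(f)$, must be dominated by an $r$-dependent surrogate through $0\leq f\leq r$, and the $\pi_i$-versus-$\pi_K$ discrepancy must be isolated into the single scalar $\Delta_i$ so that it factors cleanly out of the numerator. Once the regrouping is organized so that the cross terms $\pi_i\utilde\kappa(f)\,\pi_i\utilde\kappa(e)$ cancel and the residual error splits into a $\Delta_i$-term and a $(\beta_1,\beta_2)$-term, the bound $\epsilon_i(r)$ is independent of $f$, so taking the supremum over $0\leq f\leq r$ leaves it unchanged; the factor $2$ inherited from the mixed-sign decomposition then gives $\norm{\pi_i^*-\pi}_r\leq 2\epsilon_i(r)$.
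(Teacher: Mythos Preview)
Your proposal is correct and follows essentially the same route as the paper: the preceding displayed chain of inequalities already bounds $|\tilde\pi_i(f)-\pi f|$ by $\epsilon_i(r)$ uniformly in $0\le f\le r$, and you have simply unpacked that chain (common denominator, add-and-subtract $\pi_i\utilde\kappa(e)$ and $\pi_i\kappa(\cdot)$, $\ell^1$--$\ell^\infty$ bound for the $\Delta_i$ term, Lyapunov gaps for $\beta_1,\beta_2$, monotone replacement of $f$ by $r$, and lower-bounding $\pi_K\kappa(e)$ by $\ell_i(e)$). Your explicit verification that $\pi_i^* f=\tilde\pi_i(f)$ via linearity and your remark on uniformity in $f$ are helpful clarifications the paper leaves implicit, but they do not change the argument.
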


As noted in \cref{remark::convergence_of_LP_bounds}, the weighted total variation bound converges to zero under suitable conditions (i.e. $\pi g_i<\infty$ for $i=1,2$) as the truncation set $A_n$ increases to $S$.

It remains only to discuss how $\Delta_i$ can be numerically bounded. Since $\pi_2,\pi_K\in \mathcal{P}$, the Courtois-Semal result exploited in \cref{sec::EqExpBounds} establishes that
\begin{align*}
\pi_2-\pi_K = \sum_{x\in K}^{}(\lambda_2(x)-\lambda_K(x))\tau_x
\end{align*}
for appropriately chosen probabilities $(\lambda_2(x):x\in K)$ and $(\lambda_K(x):x\in K)$. Hence,
\begin{align}
\Delta_2 \leq \max \{\sum_{y\in K} |\sum_{x\in K}^{}(q_1(x)-q_2(x))\tau_x(y)|: q_1,q_2 \text{ are probabilities on }K\}.\label{eq::CS-bound-on-Deltai}
\end{align}
In \citet{https://doi.org/10.48550/arxiv.2208.03446}, it is argued that the right-hand side of \eqref{eq::CS-bound-on-Deltai} is further upper bounded by $\max_{x,y\in K} \sum_{z\in K}^{}|\tau_x(z)-\tau_y(z)|$, which is easily computable.

We now provide an alternative approach suitable for bounding $\Delta_1$. It is well known that
\begin{align}
\pi_1-\pi_K = \pi_K(P_1-P_K)F_i.\label{eq::PerturbationInTermsofFundamentalMatrix}
\end{align}
where $F_1$ is the \emph{fundamental matrix} associated with $P_1$, given by $F_1=(I-P+\Pi_1)^{-1}$ (with $\pi_1$ the matrix in which each row is identical to $\pi_1$); see, for example, \citet{liuPerturbationBoundsStationary2012}. For a matrix $W=(W(x,y):x,y\in K)$  define the norm $\norm{W}_{\infty}=\max_{x\in K}\sum_{y\in K}|W(x,y)|$. Then
\begin{align}
\norm{\pi_1-\pi_K}_{1} &\leq \norm{\pi_K}_{1}\cdot\norm{(P_i-P_K)F_i}_{\infty}\nonumber\\
&=\norm{\left(P_i-P_K\right)F_i}_{\infty},\label{eq::ptb-bound-on-pi-i-and-piK-in-terms-of-P-difference}
\end{align}
using standard properties of row and matrix norms (see, for example, \citet{golubMatrixComputations4th2013}).
We now write $P_K=G+N$, where $N=(N(x,y):x,y\in K)$ has non-negative entries defined by $N(x,y)=P_K(x,y)-G(x,y)$ for $x,y\in K$. Note that
\begin{align*}
\norm{N}_{\infty}&=\max_{x\in K}\sum_{y\in K}^{}N(x,y)\\
&= \max_{x\in K}\left(1-\sum_{y\in K}^{}G(x,y)\right)\\
&=1-\min_{x\in K}n(x) \overset{\Delta}{=} 1-\delta.
\end{align*}
Consequently, it follows from \eqref{eq::ptb-bound-on-pi-i-and-piK-in-terms-of-P-difference} that 
\begin{align}
\Delta_1&\leq \norm{\left(P_1-G-N\right)F_1}_{\infty}\nonumber\\
&\leq \norm{(P_1-G)F_1}_{\infty}+ (1-\delta)\norm{F_1}_{\infty}\label{eq::perturbation-based-total-variation-bound}
\end{align}
The principal computational effort required to calculate this bound on $\Delta_1$ involves the calculation of the fundamental matrix $F_1$. 
\begin{remark} In principle, the above perturbation argument could also have been applied to obtain a bound on $\Delta_2$. Our computational experience indicates that the bound associated with \eqref{eq::CS-bound-on-Deltai} is typically tighter for $\Delta_2$ than is the bound based on \eqref{eq::perturbation-based-total-variation-bound}. In fact, the bound \eqref{eq::CS-bound-on-Deltai} holds whenever $\tilde G$ pointwise dominates $G$ and the bound \eqref{eq::perturbation-based-total-variation-bound} holds for any irreducible stochastic $\tilde G$. Therefore practitioners who wish to use different approximations than the ones we propose ($\pi_1$ and $\pi_2$) are free to use the bounds \eqref{eq::CS-bound-on-Deltai} and \eqref{eq::perturbation-based-total-variation-bound}, provided they confirm the relevant conditions are in force.
\end{remark}

In the discussion of our numerical experiments in \cref{sec::NumericalExperiments}, we refer to the total variation bound based on \eqref{eq::CS-bound-on-Deltai} as the ``minorization TV bound,'' and we refer to the total variation bound based on \eqref{eq::perturbation-based-total-variation-bound} as the ``perturbation TV bound''.

\section{A Probabilistic Perspective on the Row-normalized Approximation}\label{sec::probabilistic-perspective-on-row-normalization}

We now return to the second approximation $\pi_2^*(\cdot)$ to $\pi$ that is based on an approximation to $\pi_K$ that derives from stochasticizing $G$ via row normalization. We shall now show that the stochastic matrix $P_2$ that arises in that setting has a natural probabilistic interpretation, and that it is associated with a corresponding stochastic matrix approximation to the entire stochastic matrix $P$ having state space $S'$ for which $K\subseteq S'\subseteq A\subseteq S$. 

For $x\in A$, let
\begin{align*}
u(x)=P_x(T_K<T).
\end{align*}
We note that $u(x)=n(x)=\sum_{y\in K}^{}G(x,y)$ for $x\in K$. Furthermore, for $x\in A$,
\begin{align*}
u(x)=P_x(X_1\in K)+\sum_{y\in A'}^{}P(x,y)u(y).
\end{align*}
Let $S'=\{z\in A: u(z)>0\}$, and set
\begin{align*}
R(x,y) = \begin{cases} P(x,y)/u(x), & x\in S', y\in K\\
P(x,y)u(y)/u(x), &x\in S',y\in S'-K.
\end{cases}
\end{align*}
Then, $R=(R(x,y):x,y\in S')$ is a stochastic matrix. The definition of $u(\cdot)$ implies that $K$ is a reachable set from each $x\in S'$ under $R$ (i.e. for each $x\in S'$, there exists $y_1,y_2,...,y_n$ in $S'$ with $y_n\in K$ for which $R(x,y_1)R(y_1,y_2)...R(y_{n-1},y_n)>0$.). Given \cref{assumption::G-irreducible}, it follows that $S'$ has a single closed communicating class $S''$ under $R$, with $K\subseteq S''$. In particular, $ R=(R(x,y):x,y\in S'')$ is an irreducible transition matrix.

For $x\in S'$ and $y_1, y_2,...,y_n\in S'-K$, 
\begin{align*}
&P_x(X_0=x,X_1=y_1,...,X_n=y_n, n<T_K<T)\\
&\ \ =P(x,y_1)...P(y_{n-1},y_n)u(y_n)\\
&\ \ =\left(R(x,y_1)u(x)/u(y_1)\right)\left(R(y_1,y_2)u(y_1)/u(y_2)\right)...\left(R(y_{n-1},y_n)u(y_{n-1})/u(y_n)\right)u(y_n)\\
&\ \ =u(x)R(x,y_1)...R(y_{n-1},y_n)\\
&\ \ =u(x)\tilde P_x(X_0=x, X_1=x_1,...,X_n=x_n),
\end{align*}
where $\tilde P_x(\cdot)$ is the probability on the path-space of $X$ under which $X$ evolves according to the transition matrix $R$. Similarly, we find that for $x\in S'$,$y\in K$, and $y_1,...,y_n\in S'-K$,
\begin{align*}
P_x(X_0=x,& X_1=y_1,...,X_n=y_n, X_{n+1}=y) = \\
&\qquad u(x)\tilde P_x(X_0=x, X_1=y_1, ...,X_n=y_n,X_{n+1}=y).
\end{align*}
It follows that for $x\in S'$, 
\begin{align*}
\tilde P_x((X_0,X_1,...,X_{T_K})\in \cdot)= P_x\left((X_0,X_1,...,X_{T_K})\in \cdot | T>T_K\right),
\end{align*}
so that the Markov chain having transition matrix $R$ exactly describes the dynamics of the original Markov chain having transition matrix $P$, conditional on $T>T_K$. Furthermore, we note that for $x,y\in K$,
\begin{align*}
\tilde P_x (X_{T_K}=y) &= \frac{P_x(X_{T_K}=y, T>T_K)}{u(x)}\\
&=G(x,y)/u(x)\\
&=P_2(x,y),
\end{align*}
so that the row-normalized stochastic matrix $P$ is precisely the transition matrix for the process on $K$ (or, equivalently, the censored Markov chain) associated with the Markov chain $X$ on $S'$ having transition matrix $R$. In other words, $P_2$ is an exact truncation matrix for the process on $K$ associated with the stochastic matrix approximation $R$ to $P$ that arises by conditioning the dynamics of the $P$-chain so that $T_K$ occurs before the exit $T$ to $A^c$. 

In view of the natural connection between $R$ and $P$, this suggests a third approximation to $\pi$, namely the (unique) equilibrium distribution $\pi_3^*$ associated with the transition matrix $R$, suitably extending $\pi_3^*$ from $S''$ to $S$ by setting $\pi_3^*(z)=0$ for $z\in S-S''$.

\begin{remark} The function $u(\cdot)$ can be computed, since 
\begin{align*}
u(x)=\left((I-P_{22})^{-1}P_{21}e_1\right)(x)
\end{align*}
for $x\in A'$ and
\begin{align*}
u(x)=(P_{11}e_1)(x)+(P_{12}(I-P_{22})^{-1}P_{21}e_1)(x)
\end{align*}
for $x\in K$. (We note that $u(y)=0$ is a possibility, and hence $S'$ may be a proper subset of $A$.) Once $u(\cdot)$ has been computed, $R$ becomes available and $S'$ and $S''$ can be identified.
\end{remark}

As for our first two approximations, it is of interest to know if $\pi_3^*$ converges to $\pi$ as the truncation set $A$ gets large. In particular, as in the discussion of \cref{sec::Approximation}, let $(A_n:n\geq 1)$ be a sequence of subsets containing $K$ for which $A_n\nearrow S$ as $n\rightarrow\infty$. Let $u_n(x)=P_x(T_K<T_n)$ and let $\pi_{3n}^*(\cdot)$ be the equilibrium distribution associated with the transition matrix $R_n$ having state space $S_n''$. The irreducibility and positive recurrence of the Markov chain $X$ under the transition matrix $P$ then implies our next result:

\begin{theorem} For each $x\in S$, $\pi_{3n}^*(x)\rightarrow\pi(x)$ as $n\rightarrow\infty$. 
\end{theorem}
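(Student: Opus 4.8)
The plan is to apply the ratio representation \cref{eq::GeneralizedRegenerativeRepresentation} to the finite irreducible chain governed by $R_n$, transport each ingredient back to the original $P$-chain via the conditioning identity established just above, and then pass to the limit with the Monotone Convergence Theorem. Since $R_n$ is a finite, irreducible, hence positive recurrent transition matrix on $S_n''$, and since the process on $K$ associated with $R_n$ is exactly $P_2$ (computed with truncation set $A_n$), whose equilibrium distribution is $\pi_{2,n}$, applying \cref{eq::GeneralizedRegenerativeRepresentation} to the $R_n$-chain gives
\begin{align*}
\pi_{3n}^*(y) = \frac{\tilde E_{\pi_{2,n}}\sum_{j=0}^{T_K-1} I(X_j=y)}{\tilde E_{\pi_{2,n}}\, T_K},
\end{align*}
where $\tilde E_x$ denotes expectation under $\tilde P_x$ (the $R_n$-dynamics) and $\tilde E_{\pi_{2,n}}=\sum_{x\in K}\pi_{2,n}(x)\tilde E_x$. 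This identity holds for every $y\in S$, the numerator vanishing automatically when $y\notin S_n''$.

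Next I would rewrite the numerator and denominator as expectations under $P$. For $x\in K$ and any non-negative functional $\Phi$ of the killed path $(X_0,\dots,X_{T_K})$, the identity $\tilde P_x((X_0,\dots,X_{T_K})\in\cdot)=P_x(\,\cdot\mid T_n>T_K)$ together with $P_x(T_n>T_K)=u_n(x)$ gives $\tilde E_x\Phi = E_x[\Phi\, I(T_n>T_K)]/u_n(x)$. Taking $\Phi=\sum_{j=0}^{T_K-1}I(X_j=y)$ and $\Phi=T_K$ and averaging over $\pi_{2,n}$ yields
\begin{align*}
\tilde E_{\pi_{2,n}}\sum_{j=0}^{T_K-1} I(X_j=y) = \sum_{x\in K}\frac{\pi_{2,n}(x)}{u_n(x)}\,E_x\Big[\sum_{j=0}^{T_K-1} I(X_j=y)\,I(T_n>T_K)\Big],
\end{align*}
and likewise with $\sum_{j=0}^{T_K-1}I(X_j=y)$ replaced by $T_K$ throughout.

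Finally I would pass to the limit. As $A_n\nearrow S$, recurrence gives $T_n\nearrow\infty$ and $T_K<\infty$ a.s., so $I(T_n>T_K)\nearrow 1$; hence $u_n(x)\to 1$, and by the Monotone Convergence Theorem each inner expectation increases to $E_x\sum_{j=0}^{T_K-1}I(X_j=y)$, respectively $E_xT_K$. Positive recurrence gives $E_xT_K\le E_xT_z<\infty$ for any $z\in K$, so these limits are finite, while the proof of \cref{prop_convergence} supplies $\pi_{2,n}(x)\to\pi_K(x)$. Because $K$ is finite, I may take limits term by term, so the numerator tends to $E_{\pi_K}\sum_{j=0}^{T_K-1}I(X_j=y)$ and the denominator to the finite, strictly positive $E_{\pi_K}T_K$; dividing and invoking \cref{eq::GeneralizedRegenerativeRepresentation} for the $P$-chain gives $\pi_{3n}^*(y)\to\pi(y)$.

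The main obstacle is justifying this termwise passage to the limit: controlling the $u_n(x)^{-1}$ factors, checking the monotone convergence of the truncated path expectations, and confirming $E_{\pi_K}T_K<\infty$ so that the limiting denominator is nonzero. Care is also needed to ensure the conditioning identity is applied only to genuine functionals of the killed path $(X_0,\dots,X_{T_K})$; once these points are settled, the remainder is bookkeeping on the representation and identity already in hand.
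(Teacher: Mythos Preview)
Your proposal is correct and in fact more direct than the paper's argument. The paper does not compute the limit of $\pi_{3n}^*(y)$ head-on; instead it shows that any weakly convergent subsequence of $(\pi_{3n}^*)$ must be a stationary distribution of $P$ (citing a result of Karr on weak convergence of invariant measures), and then reduces the problem to proving tightness. To verify tightness it applies exactly the same machinery you use---the ratio representation for the $R_n$-chain, the conditioning identity $\tilde E_x^n[\cdot]=E_x[\cdot\,I(T_K<T_n)]/u_n(x)$, monotone convergence, and $\pi_K^n\to\pi_K$---but only to bound $\pi_{3n}^*(K(\epsilon)^c)$. You instead push the same ingredients through the full ratio for $\pi_{3n}^*(y)$ and pass to the limit directly, which avoids the external weak-convergence lemma and the subsequence framework altogether. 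The paper's route is a robust template (tightness plus uniqueness) that requires less bookkeeping on the limiting ratio; yours is more self-contained and elementary, at the cost of tracking the $u_n(x)^{-1}$ factors and the finiteness of $E_{\pi_K}T_K$ explicitly, both of which you handle correctly.
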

\begin{proof} Any weakly convergent subsequence of $(\pi_{3n}(\cdot):n\geq 1)$ must necessarily be a stationary distribution of $P$; see, for example, \citet{karrWeakConvergenceSequence1975}. Since $P$ has a unique equilibrium distribution, it follows that $\pi$ is the unique weakly convergent limit point of $(\pi^*_{3n}(\cdot):n\geq 1)$. It is evident that the proof is therefore complete if we show that $(\pi_{3n}^*(\cdot):n\geq 1)$ is a tight sequence of probabilities on $S$. 

For $\epsilon>0$, choose a finite subset $K(\epsilon)\supseteq K$ for which $\pi(K(\epsilon)^c)<\epsilon$. Then \cref{eq::GeneralizedRegenerativeRepresentation} implies that 
\begin{align}
\frac{E_{\pi_K}\sum_{j=0}^{T_K-1}I(X_j\in K(\epsilon)^c)}{E_{\pi_K}T_K}<\epsilon.\label{eq::tightness-for-pi3}
\end{align}
Note that if $\tilde E_x^n(\cdot)$ is the expectation on the path-space of $X$ under which $X_0=x$ and $X$ evolves according to $R_n$, we can similarly apply \cref{eq::GeneralizedRegenerativeRepresentation} to the $R_n$-chain, thereby yielding the identity
\begin{align}
\pi_{3n}^*\left(K(\epsilon)^c\right)= \frac{\tilde E_{\pi_K^n}^n \sum_{j=0}^{T_{K-1}}I(X_j\in K(\epsilon)^c)}{\tilde E_{\pi_K^n}^n T_K},\label{eq::ratio-representation-for-probability-pi3-is-in-Kc}
\end{align}
where $\pi_K^n$ is the stationary distribution of the process on $K$ associated with the $R_n$ chain. But for $x\in K$,
\begin{align}
\tilde E_x^n \sum_{j=0}^{T_{K}-1}I(X_j\in K(\epsilon)^c) &= E_x\left[\sum_{j=0}^{T_K-1}I(X_j\in K(\epsilon)^c)\biggr|T_K<T_n\right]\nonumber\\
&= \frac{E_x\sum_{j=0}^{T_K-1}I(X_j\in K(\epsilon)^c)I(T_K<T_n)}{P_x(T_K<T_n)}.\label{eq::numerator-of-ratio-representation-for-probability-pi3-is-in-Kc}
\end{align}
The Monotone Convergence Theorem applies to the numerator and denominator of \cref{eq::numerator-of-ratio-representation-for-probability-pi3-is-in-Kc} when one sends $n$ to infinity. Similarly, $\tilde E_x^nT_K\rightarrow E_xT_K$ as $n\rightarrow\infty$ via the Monotone Convergence Theorem. Finally, we proved in \cref{sec::Approximation} that $\pi_K^n(x)\rightarrow \pi_K(x)$ as $n\rightarrow\infty$. It follows from \cref{eq::ratio-representation-for-probability-pi3-is-in-Kc} and \cref{eq::numerator-of-ratio-representation-for-probability-pi3-is-in-Kc} that 
\begin{align*}
\overline\lim_{n\rightarrow\infty}\pi_{3n}^*(K(\epsilon)^c)<\epsilon,
\end{align*}
establishing the tightness and completing the argument.
\end{proof}

\section{The Connection of the Exit Approximation to the Equilibrium Approximation}\label{sec::exit_approx}

In this section, we will introduce what is perhaps the simplest of all convergent truncation approximations to the equilibrium distribution, namely what we shall call the \emph{exit approximation} to $\pi$. We will then establish a connection between the exit approximation and the approximations $\pi_i^*$ when $K$ is a singleton $\{z\}$.

As in \cref{sec::Approximation}, let $A$ be the truncation set (chosen as large as possible, subject to computational tractability), and let $T=\inf\{n\geq 0:X_n\in A^c\}$ be the exit time from $A$. For a given state $z\in A$, the exit approximation is given by 
\begin{align*}
\pi_e^*(\cdot) = \frac{E_z\sum_{j=0}^{T-1}I(X_j\in \cdot)}{E_z T}.
\end{align*}
With the same notation as in \cref{sec::Approximation}, let
\begin{align*}
H = \begin{pmatrix}
P_{11} & P_{12}\\
P_{21} & P_{22}
\end{pmatrix},
\end{align*}
and put $r'^T=(r_1^T, r_2^T), e'^T=(e_1^T, e_2^T)$. Then, if $H$ is irreducible, $(I-H)^{-1}$ exists and $\pi_e^*(\cdot)$ can be computed via
\begin{align*}
\pi_e^*(x)= \frac{\left((I-H)^{-1}e_x\right)(z)}{\left((I-H)^{-1}e'\right)(z)}.
\end{align*}
This approximation to $\pi$ is discussed in \citet{senetaComputingStationaryDistribution1980} (see (2) in that paper).

\begin{remark} Note that $\pi_e^*(x) = \nu_e(x)/\sum_{y}^{}\nu_e(y)$ for $x\in A$, where
\begin{align*}
\nu_e(x) = \left((I-H)^{-1}e_x\right)(z). 
\end{align*}
Then, $\nu_e=(\nu_e(x):x\in A)$ is the unique solution of the linear system
\begin{align}
\nu_e = \delta_z + \nu_eH,\label{eq::exit_apx_linear_system}
\end{align}
where $\delta_z$ is the row vector in which $\delta_z(x)=1$ if $x=z$ and 0 otherwise. Hence, the exit approximation can be computed directly from the solution $\nu_e$ to the linear system \cref{eq::exit_apx_linear_system}.
\end{remark}

We now relate the exit approximation to the approximations of \cref{sec::Approximation}, where we now choose $K=\{z\}$. Let $\tau=\inf\{n\geq 1: X_n=z\}$ be the first return time to $z$ and note that if $r$ is a non-negative function
\begin{align*}
E_{z} \sum_{j=0}^{T-1} r(X_j) &= E_{z} \sum_{j=0}^{T-1} r(X_j) I(T<\tau) + E_z \sum_{j=0}^{T-1}r(X_j)I(\tau<T)\\
&=E_z \sum_{j=0}^{T-1}r(X_j) I(T<\tau) + E_x \sum_{j=0}^{\tau-1}r(X_j)I(\tau<T) + E_z \sum_{j=\tau}^{T-1}r(X_j)I(\tau<T)\\
&=E_z \sum_{j=0}^{(T\land \tau)-1}r(X_j) + E_z \sum_{j=0}^{T-1}r(X_j)\cdot P_z(\tau<T),
\end{align*}
from which we conclude that 
\begin{align*}
E_z\sum_{j=0}^{T-1}r(X_j) = \frac{E_z \sum_{j=0}^{(T\land \tau)-1}r(X_j)}{P_z(T<\tau)}.
\end{align*}
Consequently, if $K=\{z\}$, 
\begin{align}
\pi_e^*(\cdot) = \frac{E_z \sum_{j=0}^{(T\land T_K)-1}I(X_j\in \cdot)}{E_z(T\land T_K)}, \label{eq::singleton-exit-approx}
\end{align}
which precisely coincides with our approximation \cref{eq::tilde-pir-singleton-case}.

While $\pi_i^*$ and $\pi_e^*$ are mathematically equivalent under $K=\{z\}$, it should be noted that $\pi_e^*$ is most naturally computed in terms of $(I-H)^{-1}$ whereas $\pi_i^*$ is most naturally computed in terms of $(I-P_{22})^{-1}$. When $A$ is large, the rows of $H$ corresponding to states in $A'$ have row sums closer to 1 then those of $P_{22}$. This may have numerical implications.

\section{Extension to Markov Jump Processes}\label{sec::MarkovJumpProcess}
We now extend our analysis to the setting of an $S$-valued irreducible positive recurrent Markov jump process $(X(t):t\geq 0)$ having rate matrix $Q=(Q(x,y):x,y\in S)$, with either finite or countably infinite state space $S$. Set $\lambda(x)\overset{\Delta}{=}-Q(x,x)$ for $x\in S$. The appropriate continuous-time analog of the discrete time Lyapunov conditions \cref{eq:LyapGlobal} and \cref{eq::Pg-bound} is the following:

\begin{assumption}\label{assumption::cts-Lyapunov} Suppose there exist finite subsets $K$ and $A$ such that $K\subseteq A\subseteq S$ and non-negative functions $r,g_1, g_2, h_1,$ and $h_2$ such that $r(x)\geq \lambda(x)$ and
\begin{subequations}
\begin{align}
\sum_{y\in K^c}^{}Q(x,y)g_1(y)\leq -r(x)\label{eq::cts-time-Lyapunov-for-r}
\end{align}
and
\begin{align}
\sum_{y\in K^c}^{}Q(x,y) g_2(y) \leq -1\label{eq::cts-time-Lyapunov-for-e}
\end{align}
\label{eq::cts-time-Lyapunov-global}
\end{subequations}
for $x\in K^c$ and
\begin{align}
\sum_{y\in A^c}^{}Q(x,y)g_{i}(y) \leq h_i(x)\lambda(x)\label{eq42}
\end{align}
for $x\in A$ and $i=1,2$. 
\end{assumption}

Let $Y=(Y_n:n\geq 0)$ be the embedded discrete-time Markov chain having transition matrix $R(x,y)=Q(x,y)/\lambda(x)$ for $x\neq y$ and $R(x,x)=0$ for $x\in S$. Note that \cref{eq::cts-time-Lyapunov-for-r} and \cref{eq42} are equivalent to demanding that
\begin{subequations}
\begin{align}
\sum_{y\in K^c}^{}R(x,y)g_1(y)\leq g_1(x)- \tilde r(x)\label{eq::R-matrix-Lyapunov-for-r}
\end{align}
and
\begin{align}
\sum_{y\in K^c} R(x,y) g_2(y) \leq g_2(x)-\tilde e(x)\label{eq::R-matrix-Lyapunov-for-e}
\end{align}
\label{eq::LyapunovforR}
\end{subequations}
for $x\in K^c$ and
\begin{align}
\sum_{y\in A^c}^{}R(x,y)g_{i}(y)\leq  h_{i}(x) \label{eq::bound-on-Rg}
\end{align}
for $x\in A$ and $i=1,2$, where $\tilde w(x)=w(x)/\lambda(x)$ for a generic non-negative function $w$. Because $\tilde r(x)\geq 1$, \cref{eq::R-matrix-Lyapunov-for-r} ensures that $Y$ is positive recurrent; see {Theorem 11.3.4 in \citet{meynMarkovChainsStochastic2009}, p.266}. This implies that the jump process is non-explosive (see, for example, {Theorem 2.7.1 in \citet{norrisMarkovChains1997}, p.90}). Furthermore, the equilibrium distribution $\nu=(\nu(x):x\in S)$ associated with the jump process is related to the equilibrium distribution $\pi=(\pi(x):x\in S)$ for $(Y_n:n\geq 0)$ via the relation
\begin{align}
\nu(x) = \frac{\pi(x)/\lambda(x)}{\sum_{y\in S}^{}\pi(y)/\lambda(y)}\label{eq::R-stationary-distribution-transformation-to-Q}
\end{align}
for $x\in S$; see, for example,{ Theorem 3.5.1 of \citet{norrisMarkovChains1997}, p.117}. In particular, for a reward function $f$ for which $|f(x)|\leq r(x)$ for $x\in S$, we conclude that
\begin{align}
\sum_{x\in S}^{}\nu(x)f(x) = \frac{\sum_{x\in\mathcal{S}}^{}\pi(x)\tilde f(x)}{\sum_{x\in\mathcal{S}}^{}\pi(x)\tilde e(x)} = \frac{\pi\tilde f}{\pi \tilde e}.\label{eq::cts-time-eq-expectation-in-terms-of-pi-of-R-ratio}
\end{align}
We note that $|\tilde f(x)|\leq \tilde r(x)$. In view of the fact that \cref{eq::LyapunovforR} and \cref{eq::bound-on-Rg} are precisely conditions \cref{eq:LyapGlobal} and \cref{eq::Pg-bound} for the Markov chain $(Y_n:n\geq 0)$ with envelope function $r$ (and $g_1=g_2$), we can now apply the truncation method of \cref{sec::Approximation} to the computation of the ratio $\pi\tilde f/\pi \tilde e$.

We note that \cref{eq::cts-time-eq-expectation-in-terms-of-pi-of-R-ratio} implies that
\begin{align}
\sum_{x\in S}^{}\nu(x)f(x) = \frac{\kappa(\tilde f)}{\kappa(\tilde e)},\label{eq::cts-time-eq-expectation-ratio-representation}
\end{align}
so that the natural truncation approximation for $\sum_{x\in \mathcal{S}}^{}\nu(x)f(x)$ is
\begin{align*}
\tilde\nu_i(f)\overset{\Delta}{=}\frac{\kappa_i(\tilde f)}{\kappa_i(\tilde e)}
\end{align*}
for $i=1,2$. The bounds of \cref{sec::TV-bounds} easily imply the following results.

\begin{theorem} In the presence of \cref{assumption::G-irreducible} and \cref{assumption::cts-Lyapunov}, 
\begin{align*}
\sup_{|f|\leq r}\left|\frac{\kappa_i(\tilde f)}{\kappa_i(\tilde e)}-\sum_{x\in \mathcal{S}}^{}\nu(x)f(x)\right|\leq 2\tilde\epsilon_i(r)
\end{align*}
for $i=1,2$, where
\begin{align*}
\tilde\epsilon_i(r) = \frac{\pi_i\beta_1+\tilde\nu_i(r)\pi_i\beta_2}{\ell_i(\tilde e)} + \Delta_i \left(\frac{\tilde\nu_i(r) \norm{\tilde \kappa_2(\tilde e)}_{\infty} + \norm{\tilde \kappa_1(\tilde r)}_{\infty}}{\ell_i(\tilde e)}\right).
\end{align*}
\end{theorem}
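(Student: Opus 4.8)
The plan is to recognize this continuous-time bound as a direct instance of the discrete-time weighted total variation bound in \cref{thm:tv_bound_general}, applied to the embedded jump chain $Y=(Y_n:n\geq 0)$ with transition matrix $R$, but with the numerator and denominator rewards reweighted by $\lambda$. First I would invoke the two ratio identities already established: by \cref{eq::cts-time-eq-expectation-in-terms-of-pi-of-R-ratio} and \cref{eq::cts-time-eq-expectation-ratio-representation}, the target quantity $\sum_{x\in S}\nu(x)f(x)$ equals $\pi\tilde f/\pi\tilde e=\kappa(\tilde f)/\kappa(\tilde e)$, while the approximation is $\tilde\nu_i(f)=\kappa_i(\tilde f)/\kappa_i(\tilde e)$, where every $\kappa$- and $\pi$-object is computed for $R$ and its process-on-$K$ distribution $\pi_K$. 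Since $\lambda(x)\in(0,\infty)$, the map $f\mapsto\tilde f=f/\lambda$ is a bijection that sends the envelope constraint $|f|\leq r$ to $|\tilde f|\leq\tilde r$; because $\tilde\nu_i(f)-\sum_{x}\nu(x)f(x)$ is linear in $f$, splitting $f=f^+-f^-$ reduces the supremum to $2\sup_{0\leq f\leq r}|\tilde\nu_i(f)-\nu f|$, exactly as in the reduction preceding \cref{thm:tv_bound_general}.

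Next I would verify that the hypotheses of the discrete-time theory hold for $Y$. \cref{assumption::G-irreducible} concerns $G$ for the chain $R$ and is assumed directly. The continuous-time Lyapunov hypotheses of \cref{assumption::cts-Lyapunov}, rewritten as \cref{eq::LyapunovforR} and \cref{eq::bound-on-Rg}, are precisely the discrete-time conditions \cref{eq:LyapGlobal} and \cref{eq::Pg-bound} for $R$, with $g_1$ controlling the numerator reward $\tilde r$ and $g_2$ controlling the denominator reward $\tilde e$. Consequently \cref{prop-Lyapunov} applies to each reward separately, yielding $\eta(\bar{\tilde r})\leq g_1$ and $\eta(\bar{\tilde e})\leq g_2$ on $A^c$; together with the Lyapunov-free lower bound $\utilde\kappa$, these produce the upper bounds $\tilde\kappa_1(\cdot,\tilde r)$ and $\tilde\kappa_2(\cdot,\tilde e)$ and the gap functions $\beta_1,\beta_2$ exactly as in \cref{sec::EqExpBounds}. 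The perturbation term $\Delta_i=\norm{\pi_i-\pi_K}_1$ does not involve the reward at all, so it is bounded by the methods of \cref{sec::TV-bounds} without any change.

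With these ingredients in place, the final step is to run the algebraic chain of inequalities that precedes \cref{thm:tv_bound_general} verbatim, under the two substitutions $f\mapsto\tilde f$ for the numerator reward (bounded by $\tilde r$) and $e\mapsto\tilde e$ for the denominator reward. Under this translation the constant $\tilde\pi_i(r)$ becomes $\tilde\nu_i(r)=\kappa_i(\tilde r)/\kappa_i(\tilde e)$, and the denominator normalizations $\ell_i(e),\,\tilde\kappa_2(e)$ become $\ell_i(\tilde e),\,\tilde\kappa_2(\tilde e)$, so that the per-$f$ estimate $|\tilde\nu_i(f)-\nu f|\leq\tilde\epsilon_i(r)$ emerges with $\tilde\epsilon_i(r)$ exactly as stated. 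Combining this with the factor-of-two reduction from the first paragraph gives the claimed inequality.

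The one genuinely new point to watch — and where I expect the only real work — is that the denominator reward is now $\tilde e=1/\lambda$ rather than the constant function $e\equiv 1$ that appears in the discrete-time theorem. The entire derivation relies on the denominator normalization being strictly positive and finite, so I would confirm that $\kappa(\tilde e)=E_{\pi_K}\sum_{j=0}^{T_K-1}1/\lambda(Y_j)\in(0,\infty)$ and that the associated lower bound $\ell_i(\tilde e)$ is positive. Positivity is immediate from $\lambda(x)<\infty$, while finiteness is exactly what \cref{eq::R-matrix-Lyapunov-for-e} guarantees through \cref{prop-Lyapunov} (which also underlies the positive recurrence of the jump process and hence the existence of $\nu$). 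Once this positivity and finiteness are in hand, every inequality in the discrete-time proof transfers mechanically, and no further estimates are required.
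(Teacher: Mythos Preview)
Your proposal is correct and matches the paper's approach exactly: the paper simply asserts that ``the bounds of \cref{sec::TV-bounds} easily imply the following results,'' and what you have written is a faithful unpacking of that implication---applying the discrete-time estimate of \cref{thm:tv_bound_general} to the embedded chain $Y$ with numerator reward $\tilde f$ and denominator reward $\tilde e$, after first checking that \cref{assumption::cts-Lyapunov} translates into \cref{assumption::Lyapunov1} for $R$. Your attention to the one nontrivial wrinkle, namely that the denominator reward is $\tilde e=1/\lambda$ rather than the constant $e$, is well placed and handled correctly.
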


While our Lyapunov condition \cref{assumption::cts-Lyapunov} implies that both the jump process and the embedded discrete-time Markov chain are positive recurrent, our key identity actually holds even when the embedded chain is null recurrent, and our bounds can remain valid even without requiring that $r(x)\geq \lambda(x)$ for $x\in \mathcal{S}$. (This is the part of the Lyapunov condition \cref{assumption::cts-Lyapunov} that guarantees positive recurrence of $(Y_n:n\geq 0)$.)

\section{The Role of the Lyapunov Function}\label{sec::role-of-Lyapunov}
In this section, we briefly discuss the role of the Lyapunov condition \cref{assumption::Lyapunov1}. A consequence of \cref{eq::Lyapunov-for-r} and \cref{eq::Pg-bound} is that for $x\in S$,
\begin{align*}
\sum_{y\in S}^{}P(x,y) g(y)\leq g(x)-r(x)+c
\end{align*}
where $g(x)$ is set equal to 0 for $x\in K$ and
\begin{align*}
c= \max_{x\in K}\left[\sum_{y\in S}^{}P(x,y)g(y)+r(x)\right] =\max_{x\in K} \left[\sum_{y\in A'}^{}P(x,y)g(y) + h(x)+r(x)\right], 
\end{align*}
from which we may conclude that
\begin{align}
\pi r\leq c;\label{eq::MomentBound}
\end{align}
see \citet{glynnBoundingStationaryExpectations2008}. It follows that \cref{eq::Lyapunov-for-r} and \cref{eq::Pg-bound} are a stronger requirement than simply demanding the existence of a ``moment bound'' of the form \cref{eq::MomentBound}. The principal usage of a moment bound, in consideration of state space truncation issues, is via Markov's inequality:
\begin{align*}
\pi\left(\{x\in \mathcal{S}: r(x)\geq s\}\right)\leq \frac{\pi r}{s}.
\end{align*}
In particular, if we choose the truncation set $A$ as $\{x\in \mathcal{S}: r(x)<s\}$, we are guaranteed that
\begin{align}
\pi(A)\geq 1-\frac{\pi r}{s}.\label{eq::TailBound}
\end{align}
By choosing $s$ sufficiently large, we can therefore find a set $A$ for which $\pi(A)\geq 1-\epsilon$, where $\epsilon$ can be chosen as small as desired.

However, the knowledge that $\pi(A)\geq 1-\epsilon$ provides no information on how the chain moves from outside the truncation set $A$ back into $A$. In particular, it tells us nothing about which states $x\in A$ are the ``preferred states'' through which the chain re-enters $A$ from $A^c$. This entrance distribution into $A$ from $A^c$ plays a significant role in determining how $\pi$ assigns probability to the states in $A$. 

Note that when the only problem information available is $B=(P(x,y):x,y\in A)$ and the knowledge of $\epsilon$ for which $\pi(A)\geq 1-\epsilon$, it is generally impossible to construct useful bounds on $(\pi(y):y\in A)$. In particular, consider the transition matrix $P$ given on p.604 of \citet{gibsonAugmentedTruncationsInfinite1987}. It is evident that when $A_n=\{0,...,n\}$, $\pi(A_n^c)<\epsilon/2$ for $n$ sufficiently large. We can now construct two transition matrices $P_1$ and $P_2$ for which $(P_i(x,y):x,y\in A_n)=(P(x,y):x,y\in A_n)$ with $\pi_i(A_n^c)<\epsilon$ for $i=1,2$, specifically $P_1(n+1,0)=1$ and $P_2(n+1,n)=1$. The matrix $P_1$ describes the dynamics of ``first-state'' augmentation while $P_2$ describes the dynamics under ``last state'' augmentation. Since first state augmentation converges and last state augmentation fails to converge for this example, it is clear that bounds based only on such problem data can not converge in general.

Consequently, some information related to how the chain transitions from $A^c$ back into $A$ is needed. The Lyapunov conditions \cref{eq:LyapGlobal} and \cref{eq::Pg-bound} provide such information that can be used to build error bounds. 

Finally, suppose that $S=\mathbb Z_+^d$ and that one develops a moment bound on $\pi r$ for some function $r$ that grows polynomially in $x=(x_1,...,x_d)$. A natural choice for the truncation set $A$ will then be a set of the form $\{x\in S: \norm{x}_1\leq m\}$ for some suitable choice of $m$. In this case, \eqref{eq::TailBound} yields a bound that decays polynomially in $m$.

However, it can often be the case that the Markov chain has much better tail behavior than that obtained from the moment bound. For example, suppose that $\pi$ actually has exponentially decaying tails, but that $r$ has been chosen to grow polynomially. In this setting, any algorithm that exploits only the tail bound \eqref{eq::TailBound} can only obtain a polynomially decaying error bound.

On the other hand, our error bounds can often still recover exponential decay, even in the presence of poor choices for $g_1$ and $g_2$. For example, suppose that (as is often the case for such models) one uses Lyapunov functions $g_1$ and $g_2$ that grow polynomially in $x$ to verify \eqref{eq:LyapGlobal} for $r$. In such a setting, we see that the Lyapunov error bounds \eqref{eq::singleton-bounds} then should decay exponentially, since
\begin{align}
E_xg_i(X_T) I(T<T_K) &\leq \frac{1}{\pi(x)} E_{\pi_K}g_i(X_T) I(T<T_K)\nonumber\\
&\leq \frac{1}{\pi(x)}E_{\pi_K}\sum_{j=0}^{T_K-1} g_i(X_j)I(X_j\in A^c)\nonumber\\
&=\frac{1}{\pi(x)}E_{\pi_K}T_K\cdot \sum_{y\in A^c}\pi(y)g_i(y)\label{eq::ExponentialDecayofErrorBound}
\end{align}
for $x\in K$. In particular, if $\sum_{\norm{x}_{1}>m}^{}\pi(x)$ decays exponentially in $m$, \eqref{eq::ExponentialDecayofErrorBound} goes to zero exponentially. Hence, even with polynomial choices for $g_1$ and $g_2$, our bounding approach can still recover exponential decay in our computed bounds.

\section{Extension to Reducible Markov Chains\label{sec::Reducible}}
We now briefly discuss the extension of our approximations to the setting of reducible Markov chains. We consider only the case in which $S$ is the union of $s$ disjoint positive recurrent closed communicating classes $\mathcal{C}_1, \mathcal{C}_2,...,\mathcal{C}_s$, plus (possibly) an additional set $\mathcal{T}$ of transient states from which eventual absorption into one of the $\mathcal{C}_i$'s is certain. (In particular, we do not consider the possibility of infinitely many closed communicating classes, or closed communicating classes that are transient.) If we are aware of the reducible structure of our chain, we can apply the method previously discussed in this paper to the computation of approximations to each of the individual equilibrium distributions $\pi(\mathcal{C}_1,\cdot),...,\pi(\mathcal{C}_s, \cdot)$ associated with the $s$ closed communicating classes.

If we are unaware of the reducible structure of the chain and apply the ideas of this paper, we note that $K$ would typically intersect a number of the $\mathcal{C}_i$'s. Define $K_i=K\cap \mathcal{C}_i$ for $i=1,...,s$, and without loss of generality, assume $K_i=\emptyset$ if and only if $i>s'$. The matrix $G$ would (for $A=A_n$ large enough, with $A_n\nearrow S$) eventually be guaranteed to have block diagonal structure, corresponding to the blocks $K_1,...,K_{s'}$. The states in $K-(K_1\cup...\cup K_{s'})$ would, of course, be transient states, and each of the diagonal blocks associated with the $K_i$'s would be irreducible.

When $s'\geq 2$, the block diagonal structure in $G$ would be a diagnostic test for the reducibility of the original chain. If $G_i$ is the irreducible principle sub-matrix of $G$ corresponding to $K_i$, row normalization would lead to an irreducible stochastic matrix $P_2(K_i)=(P_2(K_i,x,y):x,y\in K_i)$, which would approximate the transition matrix $P_{K_i}$ of the censored chain on $K_i$. If $\pi_2(K_i)=(\pi_2(K_i, x):x\in K_i)$ is the (unique) equilibrium distribution associated with $P_2(K_i)$, then we may approximate the equilibrium reward associated with $\mathcal{C}_i$'s equilibrium distribution $\pi(\mathcal{C}_i)=(\pi(\mathcal{C}_i,x):x\in \mathcal{C}_i)$ via
\begin{align*}
\tilde \pi(\mathcal{C}_i, r) = \frac{\sum_{x\in K_i}^{}\pi_2(K_i,x)\utilde \kappa(x,r)}{\sum_{x\in K_i}^{}\pi_2(K_i,x)\utilde \kappa(x,e)},
\end{align*}
where $\utilde \kappa(r)$ and $\utilde\kappa(e)$ can (if one wished) be computed exactly as discussed in \cref{sec::Approximation} of this paper. In other words, $\utilde\kappa(r)$ and $\utilde\kappa(e)$ can be computed via calculations that are framed in terms of $K$ and $A$, and that ignore the reducible structure of $P_{12}$ and $P_{22}$. Of course, these computations would typically run faster if one took explicit advantage of the block structure that is present. Furthermore, in the presence of \cref{eq:LyapGlobal} and \cref{eq::Pg-bound}, the Lyapunov bounds carry over to this reducible setting, so that with $\utilde\kappa(x,f),\tilde\kappa_{1}(x,r),$ and $\tilde\kappa_{2}(x,e)$ defined exactly as in \cref{sec::EqExpBounds}, the bounds
\begin{align*}
\utilde\kappa(x,r) &\leq \kappa(x,r)\leq \tilde \kappa_1(x,r)\\
\utilde\kappa(x,e) &\leq \kappa(x,e)\leq \tilde \kappa_2(x,e)\\
\end{align*}
continue to hold in the reducible context. 

In general, we can not expect to be able to compute bounds on equilibrium moments for reducible chains unless $s'=s$. If $s'=s$, then any stationary distribution $\pi$ for the reducible model satisfies
\begin{align*}
\pi r \leq \max_{1\leq i\leq s} \frac{\max_{\phi\in \mathcal{P}_i}\sum_{x\in K_i}^{}\phi(x)\tilde\kappa_1(x,r)}{\min_{\phi\in \mathcal{P}_i}\phi(x)\utilde\kappa(x,e)},
\end{align*}
where $\mathcal{P}_i=\{\phi\in\mathcal{P}: \phi(x)=0 \text{ for } x\not\in K_i\}$. Similarly, 
\begin{align*}
\pi r\geq\min_{1\leq i\leq s}\frac{\min_{\phi\in \mathcal{P}_i}\sum_{x\in K_i}^{}\phi(x)\utilde\kappa(x,r)}{\max_{\phi\in \mathcal{P}_i}\sum_{x\in K_i}^{}\phi(x)\tilde\kappa_2(x,e)}.
\end{align*}
Hence, even if our algorithm of \cref{sec::Approximation} is applied to a reducible chain, the bounds remain valid and useful.

\section{Computational Experiments}\label{sec::NumericalExperiments}
In this section, we compare our proposed algorithms with two competing algorithms that also come equipped with error bounds and convergence guarantees. In particular, we will compare our algorithms to Repeated Truncation and Augmentation (RTA) (see \citet{courtoisBoundsPositiveEigenvectors1984,courtoisPolyhedraPerronFrobeniusEigenvectors1985,courtoisComputableBoundsConditional1986}, \citet{semalRefinableBoundsLarge1995} (discrete time) \citet{dayarBoundingEquilibriumDistribution2011}, \citet{spielerNumericalAnalysisLongrun2014} (continuous time), and the discussion and references in the review \citet{kuntzStationaryDistributionsContinuoustime2021}) and the Linear Programming Outer Approximation (LPOA) (see \citet{kuntzBoundingStationaryDistributions2019,kuntzApproximationsCountablyInfinite2021} and the discussion in the review \citet{kuntzStationaryDistributionsContinuoustime2021}). Specifically, we will compare algorithmic performance on the $G/M/1$ model that arises in queueing theory, and on the ``toggle switch'' biological model recently utilized by \citet{kuntzStationaryDistributionsContinuoustime2021} to study equilibrium distribution computation.

The $G/M/1$ queue is a single-server queue with independent and identically distributed (iid) interarrival times drawn from a distribution $G$, and iid service times drawn from an exponential distribution with rate parameter $\mu$. If $X_n$ corresponds to the number of customers in the queue just before the $n$'th arrival, then $X_n$ is a Markov chain on $S=\mathbb Z_+\overset{\Delta}{=}\{0,1,2,...\}$ with one-step probability transition matrix $P=(P(x,y):x,y\in \mathbb Z_+)$ having entries given by $P(x,y)=\beta_{x+1-y}$ for $1\leq y\leq x+1$ and $P(x,y)=0$ for $y>x+1$, where
\begin{align*}
\beta_i = \int_{0}^{\infty}e^{-\mu t}\frac{(\mu t)^i}{i!}G(dt)
\end{align*}
for $i\geq 0$ and $\beta_i=0$ for $i<0$; see \citet{asmussenAppliedProbabilityQueues2008} for details about this model. Specifically, we assume that $G(dt)=I(0\leq t\leq 2.01)dt/(2.01)$, and $\mu=1$. With this problem data, the $G/M/1$ queue is positive recurrent and has equilibrium distribution $\pi=(\pi(x):x\in \mathbb Z_+)$ given by 
\begin{align}
\pi(x)=(1-\theta)\theta^x\label{eq::GM1Geometric}
\end{align}
where $\theta=1-\xi/\mu$ and $\xi$ is the positive solution of 
\begin{align}
1 = \frac{\mu}{\mu-\xi}\int_{0}^{\infty}e^{-\xi x}G(dx)\label{eq::GM1RootProblem};
\end{align}
see p.279 of \citet{asmussenAppliedProbabilityQueues2008}.

To construct suitable Lyapunov functions for this example, suppose that $r(x)=x$ for $x\geq 0$ and note that
\begin{align*}
(Pg)(x) = Eg([x+1-V]^+),
\end{align*}
where $V$ has probability mass function $(\beta_j:j\geq 0)$. Put $g_1(x)=c_1x^2$ (with $c_1>0$) and observe that for any $K\subseteq S$, 
\begin{align*}
\sum_{y\in K^c}^{}P(x,y)g_1(y) &\leq \sum_{y\in S}^{}P(x,y)g_1(y)\\
&\leq Ec_1(x+1-V)^2\\
&= g_1(x) + 2c_1xE(1-V) + c_1E(1-V)^2\\
&= g_1(x) - r(x) - (2c_1 -200)(EV-1)x + c_1E(1-V)^2.
\end{align*}
Put $c_1=300$ and $n_1 = \lceil c_1E(1-V)^2/((2c_1-200)(EV_1-1))\rceil$, so that $(Pg_1)(x)-g_1(x)+r(x)$ is guaranteed to be non-positive for $x\geq n_1$. For $g_2$, we select $g_2(x)=c_2x$ (with $c_2>0$) and again note that for any $K\subseteq S$,
\begin{align*}
\sum_{y\in K^c}^{}P(x,y)g_2(y) &\leq \sum_{y\in S}^{}P(x,y)g_2(y)\\
&= c_2 E[(x+1-V)+ (V-x-1)I(V>x)]\\
&\leq g_2(x)-c_2E(V-1) + c_2 EVI(V>x)\\
&\leq g_2(x)-1 + (200-c_2)(EV-1)+\frac{c_2EV^3}{x^2}.
\end{align*}
We select $c_2=300$ and put $n_2 =\lceil (c_2 EV^3/((c_2-200)(EV-1)))^{1/2}\rceil$, so that $(Pg_2)(x)-g_2(x)+1\leq 0$ for $x\geq n_2$. Since $P$ has a finite number of non-zero entries in each row, we can numerically compute $(Pg_i)(x)$ $(i=1,2)$ for any $x$ that we wish. In view of this numerical consideration, we can therefore set $K=\{0,1,2,...,k^*\}$ where $k^*=\max\{x\in\mathbb Z_+: (Pg_1)(x)-g_1(x) + r(x)> 0 \text{ or } (Pg_2)(x)-g_2(x)+1> 0\}$ (which is guaranteed to be less than or equal to $\max(n_1,n_2)$). With our specified problem data, $n_1=202, n_2=66,$ and $k^*=201$. 

In our computations, we study the behavior of our algorithm as a function of the size of the truncation set $A$. Regardless of the size of $A$, the sparsity structure of $P$ allows us to compute $h_i(x)=\sum_{y\in A^c}^{}P(x,y)g_i(y)$ numerically for $x\in A$ and $i=1,2$. Because of this, and because $\pi$ is a geometric distribution for this example, so that all of its moments are finite and in particular $\pi g_i<\infty$, we conclude that our bounds are convergent.

The RTA algorithm and the LPOA algorithm require an upper bound on $\pi w$ for some norm-like function (or, equivalently, column vector) $w$. To obtain the required bound, we seek a non-negative (Lyapunov) function $g_3$ for which there exists $c\in \mathbb R_+$ for which
\begin{align}
(P g_3)(x)\leq  g_3(x)-w(x)+c\label{eq::FullLyapunov}
\end{align}
for all $x\in S$. \citet{glynnBoundingStationaryExpectations2008} show that \cref{eq::FullLyapunov} implies that $\pi w\leq c$. For the $G/M/1$ queue, we put $g_3(x)=c_3x^4$ and $w(x)=x^3$ and observe that
\begin{align*}
(P g_3)(x) &\leq c_3E(x+1-V)^4\\
&= g_3(x)-x^3+(4c_3(1-EV)+1)x^3 + 6c_3E(1-V)^2x^2 + \\
&\qquad \qquad 4c_3 E(1-V)^3x +c_3 E(1-V)^4.
\end{align*}
It can be shown that if $(4c_3(1-EV)+1)<0$ and $n_3 \geq \lceil 1+\max\{|\frac{a_2}{a_3}|, |\frac{a_1}{a_3}|, |\frac{a_0}{a_3}|\}\rceil $ with $a_0=c_3 E(1-V)^4$, $a_1 = 4c_3 E(1-V)^3$, $a_2 = 6c_3E(1-V)^2, a_3 =  4c_3(1-EV)+1$ we are guaranteed $(P g_3)(x)\leq g_3(x)-w(x)$ for $x\geq n_3$. Hence, if we set
\begin{align*}
c=\max\{(P g_3)(x)- g_3(x)+w(x):0\leq x\leq n_3\}, 
\end{align*}
\cref{eq::FullLyapunov} holds, so $\pi w\leq c$. For our problem data, we set $c_3=300$ and find $n_3=1803$ and $c \leq 8.3 \times 10^{7}$. With the moment bound in hand, we can use (4.35), (4.36), (4.37), (4.38), and (4.40) in \citet{kuntzStationaryDistributionsContinuoustime2021} to produce the relevant bounds for RTA and LPOA.

We turn next to the toggle switch model that describes the random evolution of the total number of protein molecules of two different types over time. We study the symmetric case where the rate of synthesis of type 1 (type 2) proteins is $\lambda/(1+x_2)$ ($\lambda/(1+x_1)$), where $x_i$ is the number of type $i$ proteins ($i=1,2$). Also, the rate at which each protein molecule decays is $\mu$, regardless of type. This leads to a rate matrix $Q=(Q(x,y):x,y\in \mathbb Z_+^2)$ for which $Qf$ is given by 
\begin{align*}
(Qf)(x_1,x_2) &= \frac{\lambda}{1+x_2}\left(f(x_1+1,x_2) - f(x_1,x_2)\right)\ +\frac{\lambda}{1+x_1}(f(x_1,x_2+1)-f(x_1,x_2))\\
&\qquad+ \mu x_1(f(x_1-1,x_2)-f(x_1,x_2)) + \mu x_2(f(x_1,x_2-1)-f(x_1,x_2))
\end{align*}
for $(x_1,x_2)\in\mathbb Z_+^2$ with $x_1,x_2\geq 1$ and $f:\mathbb Z_+^2\rightarrow \mathbb R_+$. We specifically study two variants of this model, one with $\lambda=20$ and $\mu=1$, which we denote as TS(20,1) and the other with $\lambda=90$ and $\mu=1$, which we denote as TS(90,1). TS(20,1) was analyzed by \citet{kuntzStationaryDistributionsContinuoustime2021} in their numerical study.

In contrast to the $G/M/1$ queue, there is no closed form for $\pi$ for these toggle switch models. However, we can derive an approximation $(x^*,x^*)$ to the mode(s) of $\pi$ by noting that the birth and death rates for type $i$ proteins are approximately equal at the integer point(s) closest to $(x_1^*, x_2^*)$ at which $\lambda=(1+x_2^*)\mu x_1^*=(1+x_1^*)\mu x_2^*$. It follows that $x_1^*=x_2^*=x^*$ and $\mu {x^*}^2+\mu x^*-\lambda=0$. We find that the integer point closest to $(x^*,x^*)$ is $(4,4)$ for TS(20,1) and $(9,9)$ for TS(90,1) (In fact, $x^*$ is integral for these models.). For this model, our function $r$ is the total number of proteins, namely $r(x)=x_1+x_2$. We take $g_1(x)=(x_1-x^*)^2+(x_2-x^*)^2$ and find
\begin{align*}
(Qg_1)(x) &= \frac{\lambda}{1+x_2}\left(2(x_1-x^*)+1\right) + \frac{\lambda}{1+x_1}(2(x_2-x^*)+1) \\
&\qquad+ \mu x_1(-2(x_1-x^*)+1) + \mu x_2(-2(x_2-x^*)+1)\\
&\leq  2\lambda + 2\lambda (x_1+x_2) + 2\mu(2x^*+1)(x_1+x_2) - 2\mu (x_1^2+x_2^2)\\
&= -r(x) +2\lambda + (1 + 2\lambda +2\mu(2x^*+1))(x_1+x_2)- 2\mu (x_1^2+x_2^2)
\end{align*}
when $x^*\geq 1/2$, as is the case in our models. A calculus argument shows that when $x_1+x_2\geq n_1= \lceil(c_1 + \sqrt{c_1^2 + 4c_2c_0/2})/c_2\rceil$ with $c_0 = 2\lambda, c_1 = (1 + 2\lambda +2\mu(2x^*+1))$, and $c_2 = 2\mu$, we have $(Qg_1)(x)\leq -r(x)$. We pick $g_2(x)=|x_1-x^*|+|x_2-x^*|$ and we find,
\begin{align*}
(Qg_2)(x) &= \frac{\lambda}{1+x_2}\left(|x_1+1-x^*|-|x_1-x^*|\right)+ \frac{\lambda}{1+x_1}\left(|x_2+1-x^*|-|x_2-x^*|\right)\\
&\qquad + \mu x_1 (|x_1-1-x^*|-|x_1-x^*|)+ \mu x_2 (|x_2-1-x^*|-|x_2-x^*|)\\
&\leq 2\lambda- \mu x_1 I(x_1> x^*)- \mu x_2 I(x_2> x^*)+ 2\mu x^*\\
&\leq 2 \lambda - \mu(x_1+x_2) + 4\mu x^*\\
&\leq -1
\end{align*}
when $x_1+x_2\geq n_2 =\lceil(2 \lambda+4\mu x^*+1)/\mu\rceil$. For TS(20,1) (resp. TS(90,1)) we have $n_1=60$ and $n_2=57$ ($n_1=220$, $n_2=217$). We can now put $K=\{(x_1,x_2)\in Z_+^2:x_1+x_2<\max(n_1,n_2), (Qg_1)(x_1,x_2)>-r(x_1,x_2) \text{ or }(Qg_2)(x_1,x_2)>-1)\}$. (Observe that $(Qg)(x)$ is numerically computable for each $x$, since $Q$ has finitely many non-zeros in each row.) The resulting set $K$ is illustrated in \cref{fig::K_sets}. 

\begin{includefigures}
 \begin{figure}[H]
\subfloat[]{\label{sf_K_union}
\includegraphics[width=60mm]{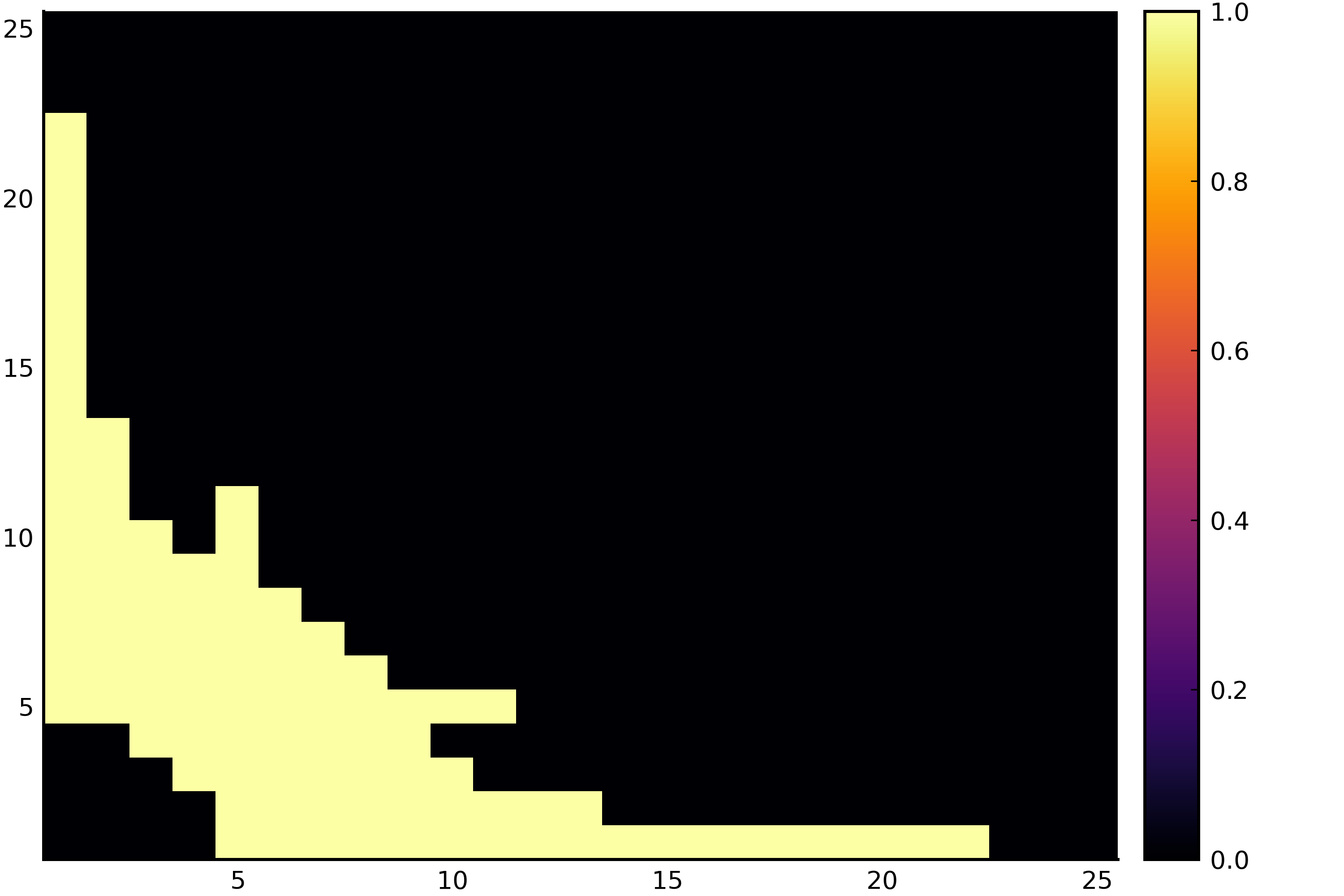}
}
\subfloat[]{\label{sf_K_tandem}
\includegraphics[width=60mm]{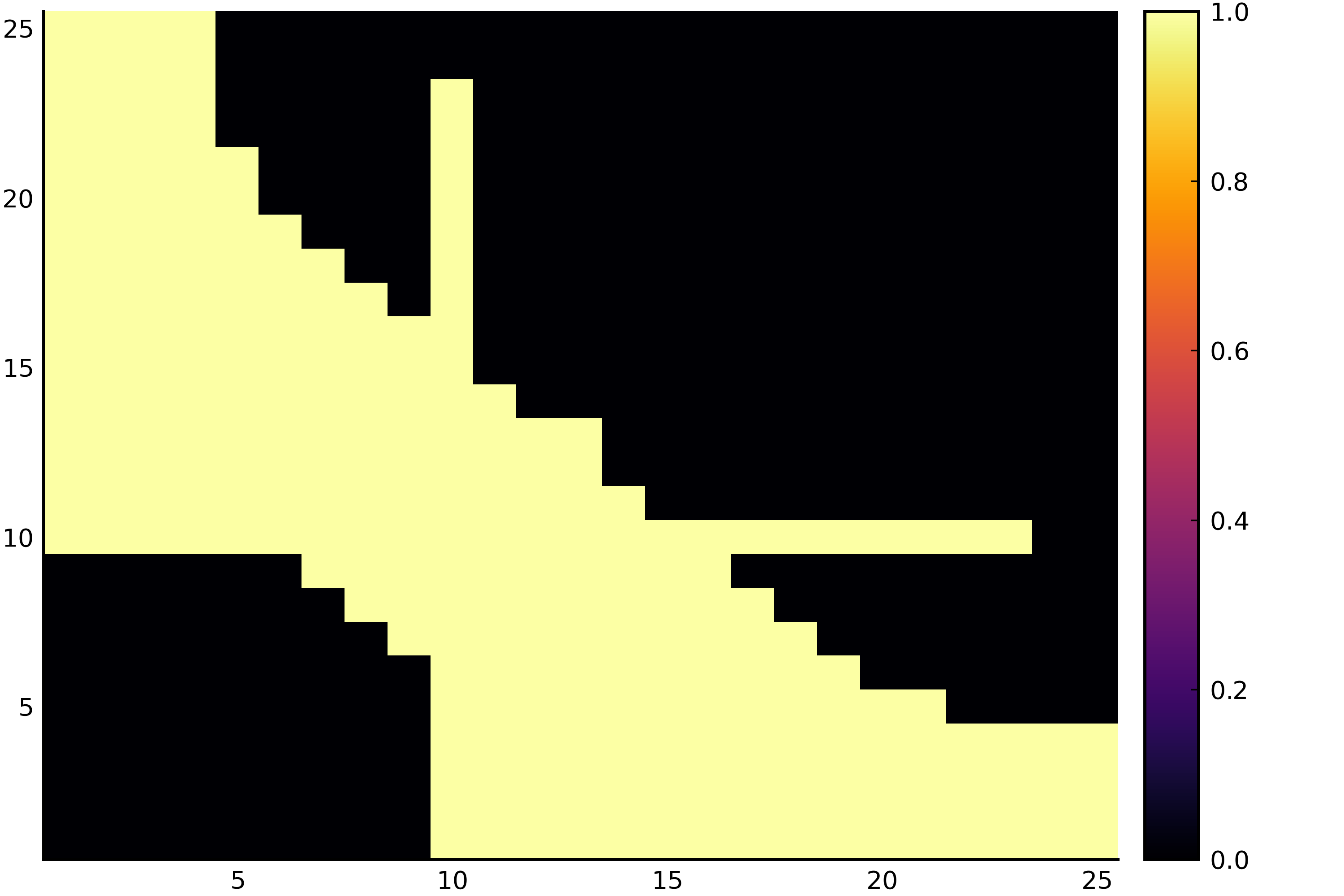}
}\\
\caption{The set $K$ for TS(20,1) (a) and TS(90,1) (b).\label{fig::K_sets}}
\end{figure}
\end{includefigures}

As with the $G/M/1$ queue, we can compute $h_i(x)=\sum_{y\in A^c} P(x,y)g_i(y)$ exactly for TS(20,1) and TS(90,1). Furthermore, the analysis of \citet{kuntzBoundingStationaryDistributions2019} Appendix B carries over to these models, so that we can guarantee that $(x_1+x_2)^p$ is $\pi$-integrable for all $p$. This implies that $x_1^p+x_2^p$ is $\pi$-integrable for all $p$, so that we conclude $|x_1-x^*|^p+|x_2-x^*|^p$ is $\pi$-integrable for all $p$. We conclude that $\pi g_i<\infty$ and therefore that our bounds are convergent.

Turning next to the bound on $\pi w$ (for some norm-like function $w$ that goes to infinity at least as fast as $r$), we can use the bound computed by \citet{kuntzStationaryDistributionsContinuoustime2021} via semi-definite programming for TS(20,1). Specifically, their bound asserts that $\pi w\leq 1.8\times 10^{7}$ when $w(x_1,x_2)=(x_1+x_2)^6$. For the moment bound for TS(90,1), we again construct (as in the $G/M/1$ setting) a Lyapunov function for bounding $\pi w$. Specifically, we use $g_3(x)=\alpha g_1(x)$ with $g_1(x)=(x_1-x^*)^2+(x_2-x^*)^2$ as above for $w(x)=(x_1+x_2)^2$. Another calculus argument establishes that for $\alpha>1$, $x_1+x_2\geq n_3=\lceil (\alpha c_1+\sqrt{\alpha^2c_1^2+4(\alpha c_2/2-c_3)\alpha c_0})/(2(\alpha c_2/2-c_3))\rceil$ with $c_3=1$ and $\alpha c_2/2-c_3\geq 0$, we have $(Qg_3)(x)\leq -w(x)$. We therefore have the moment bound
\begin{align*}
\pi w \leq \max_{x:x_1+x_2\leq n_3} \left((Qg_3)(x)+w(x\right)).
\end{align*}
We fix $\alpha=4$ and find that $n_3 = 293$ and achieve the bound $\pi w\leq 16.4\times 10^3$.

We now turn to the computational implementations of the algorithms. First we describe a reformulation of the bounds of \cref{sec::EqExpBounds} that we found was more numerically stable. Then we describe implementation details for our computational experiments in the Julia programming language (version 1.6.2).

Note that the optimization problems in \cref{eq::CS-lower-and-upper-bounds} do not require the full matrix inversion of $I-G$, because for a column vector $q=(q(x):x\in K)$ we have
\begin{align}
\tau_x q = \frac{\xi_1(x)}{\xi_2(x)},\label{eq::CS-bounds-in-terms-of-general-q}
\end{align}
where $\xi_1=(\xi_1(x):x\in K)$ is a column vector satisfying $(I-G)\xi_1=q$ and $\xi_2=(\xi_2(x):x\in K)$ is a column vector satisfying $(I-G)\xi_2=e_1$. When $A$ is large relative to $K$, we expect $G$ to be close to stochastic, so that $I-G$ is then ill-conditioned. We now show how to reformulate the two linear systems arising in \cref{eq::CS-bounds-in-terms-of-general-q} into two linear systems involving a $(|K|-1)\times (|K|-1)$ principal submatrix $\widehat G$ of $G$ obtained by deleting the row and column of $G$ associated with a state $z\in K$. A good choice of $z$ may lead to a better conditioned matrix $I-\widehat G$. We found this to be computationally helpful in our examples below. 

Put $\widehat K=K-\{z\}$ and note that for a generic vector $q$ and $x\in K$,
\begin{align}
((I-G)^{-1}q)(x) &= \sum_{j=0}^{\infty}(G^jq)(x)\nonumber\\
&=E_x \sum_{j=0}^{\widehat T-1}q(X_{T_K(j)})\label{eq::numerical-stable-ImGq}
\end{align}
where $T_K(j)$ is the time at which $X$ returns to $K$ for the $j$'th time, and $\widehat T=\inf\{j\geq 1: T<T_K(j)\}$. For $z\in K$, set $\widehat \tau(z)=\inf\{j\geq 1: X_{T_K(j)}=z\}$, and observe that
\begin{align}
E_x \sum_{j=0}^{\widehat T-1}q(X_{T_k(j)}) &=  E_x \sum_{j=0}^{(\widehat T\land \widehat \tau(z))-1}q(X_{T_K(j)}) + P_x(\widehat \tau(z)<\widehat T)E_z\sum_{j=0}^{\widehat T-1}q(X_{T_K(j)})\label{eq::numerical-stable-sum-q-until-hatT}
\end{align}
for $x\in K$. Hence, for $x\in \widehat K$,
\begin{align}
((I-G)^{-1}q)(x) = \left((I-\widehat G)^{-1}\widehat q\right)(x) + ((I-\widehat G)^{-1}\chi)(x)((I-G)^{-1}q)(z),\label{eq::numerical-stable-3}
\end{align}
where $\widehat q=(\widehat q(x):x\in \widehat K)$ with $\widehat q(x)=q(x)$ for $x\in \widehat K$, and $\chi=(\chi(x):x\in \widehat K)$ is defined by $\chi(x)=G(x,z)$ for $x\in \widehat K$.

Similarly, 
\begin{align*}
((I-G)^{-1}q)(z) &= q(z) + \sum_{x\in \widehat K}^{}G(z,x)((I-\widehat G)^{-1}\widehat q)(x) \\
&\qquad + \sum_{x\in \widehat K}^{}G(z,x)((I-\widehat G)^{-1}\chi)(x)((I-G)^{-1}q)(z),
\end{align*}
from which we may conclude that
\begin{align}
((I-G)^{-1}q)(z) = \frac{q(z) + \sum_{x\in \widehat K}^{}G(z,x)((I-\widehat G)^{-1}\widehat q)(x)}{1-\sum_{x\in \widehat K}^{}G(z,x)((I-\widehat G)^{-1}\chi)(x)}\label{eq::numerical-stable-4}
\end{align}
Relations \eqref{eq::numerical-stable-ImGq}-\eqref{eq::numerical-stable-4} allow us to compute $(I-G)^{-1}q$ in terms of $(I-\widehat G)^{-1}\widehat q$ and $(I-\widehat G)^{-1}\chi$. Since $\widehat G$ converges to a strictly substochastic matrix as $A\nearrow S$, $I-\widehat G$ is much better conditioned than is $(I-G)$ when $A$ is large. Of course, the vector $(I-G)^{-1}e_1$ can be similarly computed in terms of linear systems involving the coefficient matrix $(I-\widehat G)$. It follows that
\begin{align*}
\tau_zq = \frac{((I-G)^{-1}q)(z)}{((I-G)^{-1}e_1)(z)} = \frac{q(z)+ \sum_{x\in \widehat K}^{}G(z,x)((I-G)^{-1}\widehat q)(x)}{1+\sum_{x\in \widehat K}^{}G(z,x)((I-\widehat G)\widehat e_1)(x)},
\end{align*}
where $\widehat e_1=(\widehat e_1(x):x\in \widehat K)$ has entries $\widehat e_1(x)=1$ for $x\in \widehat K$. Similar identities for $\tau_x q$ for $x\in \widehat K$ can be easily derived, thereby providing expressions that involve the better conditioned $I-\widehat G$. In our experiments, we choose $z$ to be the $x\in K$ that maximizes $\sum_{y\in K}^{}G(x,y)$.

These experiments were conducted in the Julia programming language (version 1.6.2) (see \citet{DunningHuchetteLubin2017}). All linear systems that arise in the computation of the approximation of \cref{sec::Approximation} and bounds of \cref{sec::EqExpBounds,sec::TV-bounds} were solved using Julia's backslash operator, which defaults to using a dense (sparse) $LU$ factorization for dense (sparse) square matrices that do not exhibit further structure like symmetry or triangularity. On the way to computing $G$, we compute $H=P_{21}(I-P_{22})^{-1}$. When $P$ is sparse, the $|K|$ linear systems that arise in the computation of $H$ have sparse coefficient matrices and sparse right hand sides. Because Julia version 1.6.2 does not support solves with a sparse right hand side for sparse matrices, the matrix $H$ (and therefore $G$) is computed as a dense matrix. For the models and choices of $K$ in this paper, $G$ may have many zeros, so we then convert $G$ to a sparse array. Computing $G$ as a dense matrix may not always be advisable, but we achieve good performance with this implementation. We use row normalization to stochasticize $G$ for all of our experiments. (Initial experiments suggested that the approximation based on Perron-Frobenius normalization underperforms the approximation based on row normalization on these models.) We find the stationary distribution associated with $P_2$ using the ``Remove an Equation'' approach of Section 2.3.1.3 in \citet{stewartIntroductionNumericalSolution1994}.

RTA and LPOA are described in detail in \citet{kuntzStationaryDistributionsContinuoustime2021}. For RTA, as above, we again use the ``Remove an Equation'' approach of Section 2.3.1.3 in \citet{stewartIntroductionNumericalSolution1994} to compute the stationary distributions for each of the augmentations. For LPOA, we use JuMP with Mosek, and this time set \texttt{MSK\_IPAR\_OPTIMIZER} equal to 4,``INTPNT'' (which directs the Mosek LP solver to use its interior point method), and optimize using the the variables $v(x)=\pi(x)\lambda(x)$. We found that these settings typically performed better than other settings on this class of problems. When computing the approximation for LPOA, we follow the suggestion of \citet{kuntzStationaryDistributionsContinuoustime2021} and maximize the total mass of the approximating measure (see (4.53) in \citet{kuntzStationaryDistributionsContinuoustime2021}).

We turn next to a brief discussion of the complexity of the algorithms. RTA requires solving as many $|A|\times |A|$ linear systems as there are states in the ``in-boundary'', given by $\mathcal{B}_i(A)\overset{\Delta}{=}\{x\in A: \text{there exists } y\in A^c \text{ with }P(y,x)>0\}$. Meanwhile, LPOA requires solving two LPs having $|A|$ decision variables. All of our methods require first computing $G$, which requires solving $|K|$ $|A'|\times |A'|$ linear systems. The numerically stable implementation of the bounds of \cref{sec::EqExpBounds} require solving 6 linear systems with a $|K|-1\times |K|-1$ matrix. Meanwhile, our minorization TV bounds require inverting a $|K|\times |K|$ matrix and computing $\binom{|K|}{2}$ total variation distances between distributions of size $|K|$. (Note that the $\binom{|K|}{2}$ above can be reduced to $|K|-1$ at the cost of computing a bound that is at most twice the original total variation bound, by noting that the triangle inequality implies that for any $w\in K$
\begin{align*}
\max_{x,y\in K} \sum_{z\in K}^{}|\tau_x(z)-\tau_y(z)|\leq 2 \max_{y\in K}\sum_{z\in K}^{}|\tau_w(z)-\tau_y(z)|.
\end{align*}
We did not find using this trick necessary to achieve state of the art performance in our experiments.). Our perturbation TV bounds require inverting the $|K|\times |K|$ fundamental matrix. Hence, when $|A|>>|K|^2$ (as occurs when $A_n\nearrow S$ with $K$ fixed), we expect our equilibrium expectation bounds to be computable in less time than is the case with LPOA. (We note that this conclusion relies on the claim that, at least with practical algorithms, one would expect solving a constant number of linear systems of size $|A'|\approx |A|$ to scale more favorably with $|A|$ than solving an LP with $|A|$ decision variables.) For models for which the in-boundary grows to infinity when $A_n\nearrow S$ (for example, the $G/M/1$ queue and toggle switch models), we can expect our bounds and approximation to be computed in less time than RTA eventually. More generally, note that because the ``boundary'' of $A$ typically has an increasing number of elements as $A_n\nearrow S$ when $S=\mathbb Z_+^d$ for $d\geq 2$, we can expect the runtime of RTA to typically degrade more quickly relative to our methods for models in two or more dimensions. 

To compute our approximation errors for the various algorithms, we wish to numerically calculate the ``exact'' $\pi$'s for our models. We call these numerically computed ``exact'' $\pi$'s our reference solutions. While we have an analytical closed form for $\pi$ as associated with the $G/M/1$ queue (see \cref{eq::GM1Geometric}), initial experiments suggested that the root $\tau$ appearing in \cref{eq::GM1RootProblem} may be difficult to compute to high precision. As a result, we use our approximation to $\pi$ with $K=\{0,1,...,4\}$, and $A=\{0,1,...,10000\}$ as our reference solution. In particular, the bounds of \cref{sec::TV-bounds} yield a total variation error bound of less than $10^{-12}$ for the reference solutions with this choice of $K$ and $A$. We also compute a reference approximation to $\pi r$ using $K=\{0,1,2,...,201\}$ and $A=\{0,1,...,10000\}$. In this case, the bounds of \cref{sec::TV-bounds} provide an $r$-weighted total variation guarantee of less than $10^{-6}$.

For TS(20,1) and TS(90,1) we use $K$ as described above and use the same $A=\{x\in \mathbb Z_+^2: x_1+x_2\leq 200\}$ for both models. For TS(20,1), the bounds of \cref{sec::TV-bounds} give a total variation error guarantee of less than $10^{-13}$ and an $r$-weighted total variation guarantee of less than $10^{-11}$ for our approximation. For TS(90,1), we achieve a total variation error guarantee of less than $10^{-12}$ and an $r$-weighted total variation guarantee of less than $10^{-10}$. We show the reference solutions restricted to $\{x\in \mathbb Z_+^2: x_1+x_2\leq 25\}$ in \cref{fig_reference_solutions}. 

\begin{includefigures}
 \begin{figure}[H]
\subfloat[]{\label{sf_KuntzTS_Reference}
\includegraphics[width=60mm]{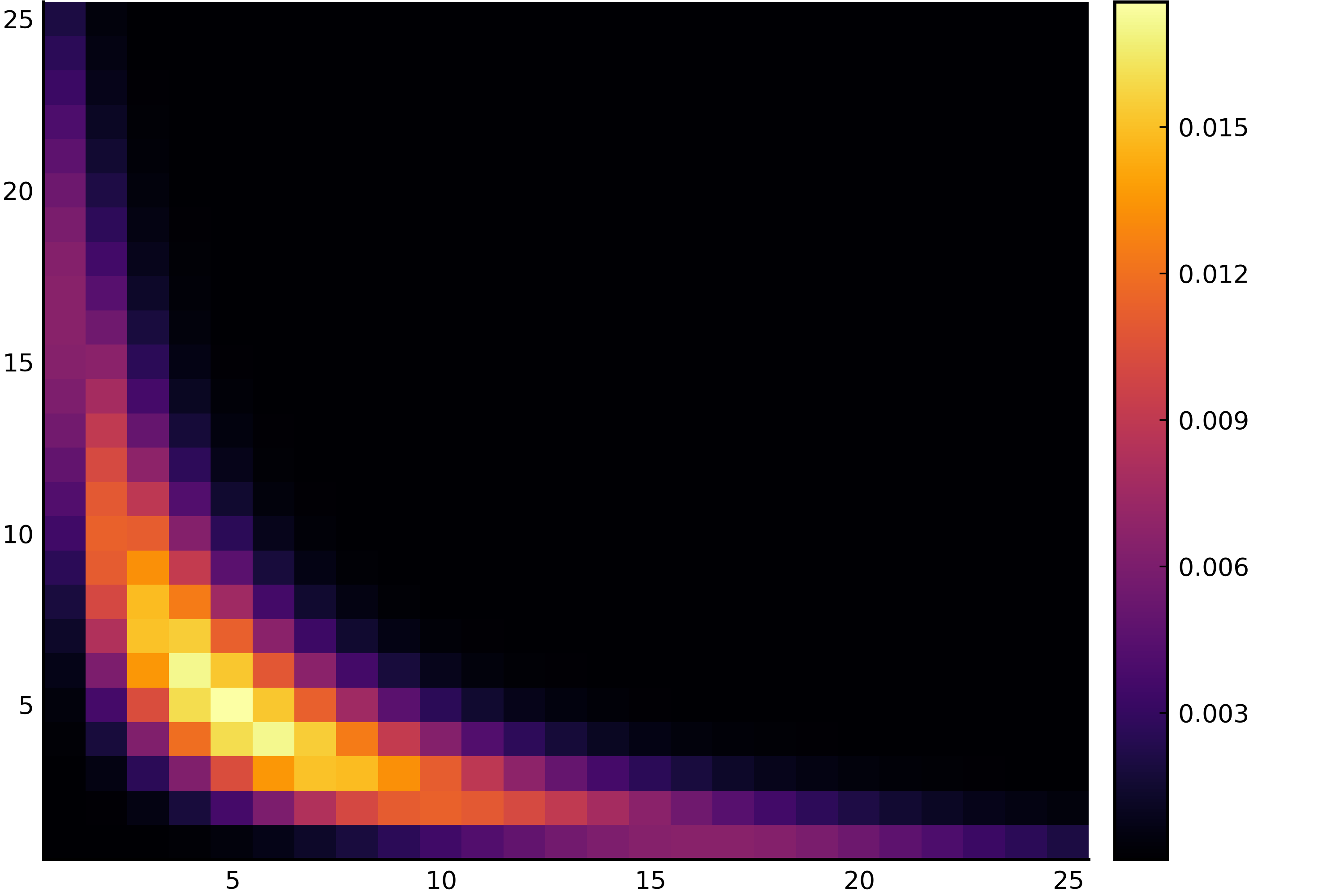}
}
\subfloat[]{\label{sf_TS99_Reference}
\includegraphics[width=60mm]{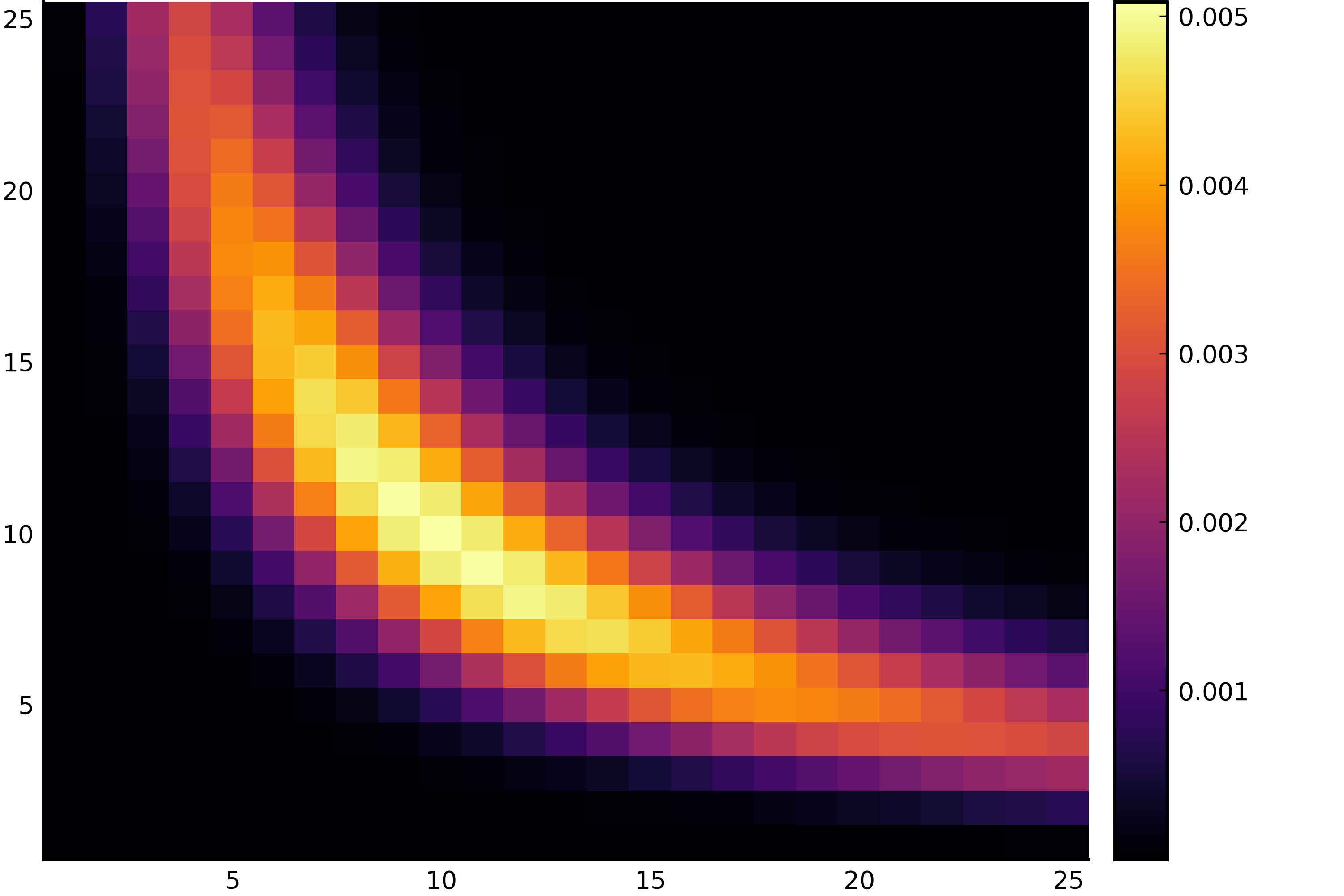}
}\\
\caption{Reference solution for TS(20,1) (a) and TS(90,1) (b).\label{fig_reference_solutions}}
\end{figure}
\end{includefigures}

We are now ready to report our computational findings comparing our algorithms to RTA and LPOA. 

In \cref{fig::GM1,fig::KTS,fig::TS99}, we show (a) the total variation distance of the approximations from the reference solutions, (b) the runtime of the method for computing the approximation, (c) the total variation guarantees for the approximation, (d) the runtime for computing the total variation guarantee and approximation, (e) the relative error gap of the bounds on $\pi r$, and (f) the runtimes for producing those bounds for the $G/M/1$ queue, TS(20,1), and TS(90,1). (Given an upper bound $u$ and a lower bound $\ell$ on the quantity $\pi r$, the relative error gap is defined to be $(u-l)/\pi r$.) For (b), we include the runtime for computing the total variation bound for the approximation $\pi_2^*$, since these experiments are meant to compare algorithms with similar error bound guarantees.

As indicated in the figures, our methods appear to outperform RTA and LPOA both in the quality of approximation and bounds and in runtime for TS(20,1) and TS(90,1) (except for very small truncation sizes). For the $G/M/1$ queue, RTA provides a slightly better approximation at a significant computational cost. Conversely, LPOA has the shortest runtimes but provides an approximation with an almost maximal total variation error (note the black bars corresponding to LPOA's approximation's total variation errors are close to 1 and almost not discernible in \cref{sf_gm1_apxError}). Note that, for the $G/M/1$ queue, the in-boundary is the entire truncation set $A$, which explains RTA's significant computational cost. It also explains the low quality approximation provided by LPOA: the only constraints in the LP of LPOA are the moment bound and bounds on the total probability of the approximating measure (in particular, the first constraint in (4.48) of \citet{kuntzStationaryDistributionsContinuoustime2021} is not in force on any state). That $A=\mathcal{B}_i(A)$ on this model may also explain LPOA's short runtimes: it may be that LPOA's LP's are so simple that the number of decision variables is no longer a reasonable proxy for the difficulty of the LP. The fact that LPOA and RTA have similar relative error gaps despite the fact that LPOA is bound to perform poorly on this example suggests that the dominant bottleneck for these methods on this problem is the moment bound. This may be true more generally on these examples.

It is important to keep in mind that we have not optimized over choices of moment bounds, choices of Lyapunov functions, or choices of Lyapunov functions for moment bounds, for these experiments. We also have not included any of the runtimes for pre-computation of the Lyapunov functions and moment bounds in these experiments. (The runtime for the pre-computation of the Lyapunov functions and moment bounds based on Lyapunov functions are typically insignificant relative to the algorithms we consider on the models and truncation sizes we consider, except for the computation of the moment bounds for the $G/M/1$ queue and TS-20-1. For TS-20-1, we did not run the semi-definite program that produced the moment bound in \citet{kuntzStationaryDistributionsContinuoustime2021}, and so we cannot comment on its runtime relative to the algorithms we consider.)

The code for these experiments is available at \url{https://github.com/alexinfanger/TruncationAlgorithmsforMarkovChains}.

\begin{includefigures}
\begin{figure}[H]
\subfloat[]{\label{sf_gm1_apxError}
\includegraphics[width=60mm]{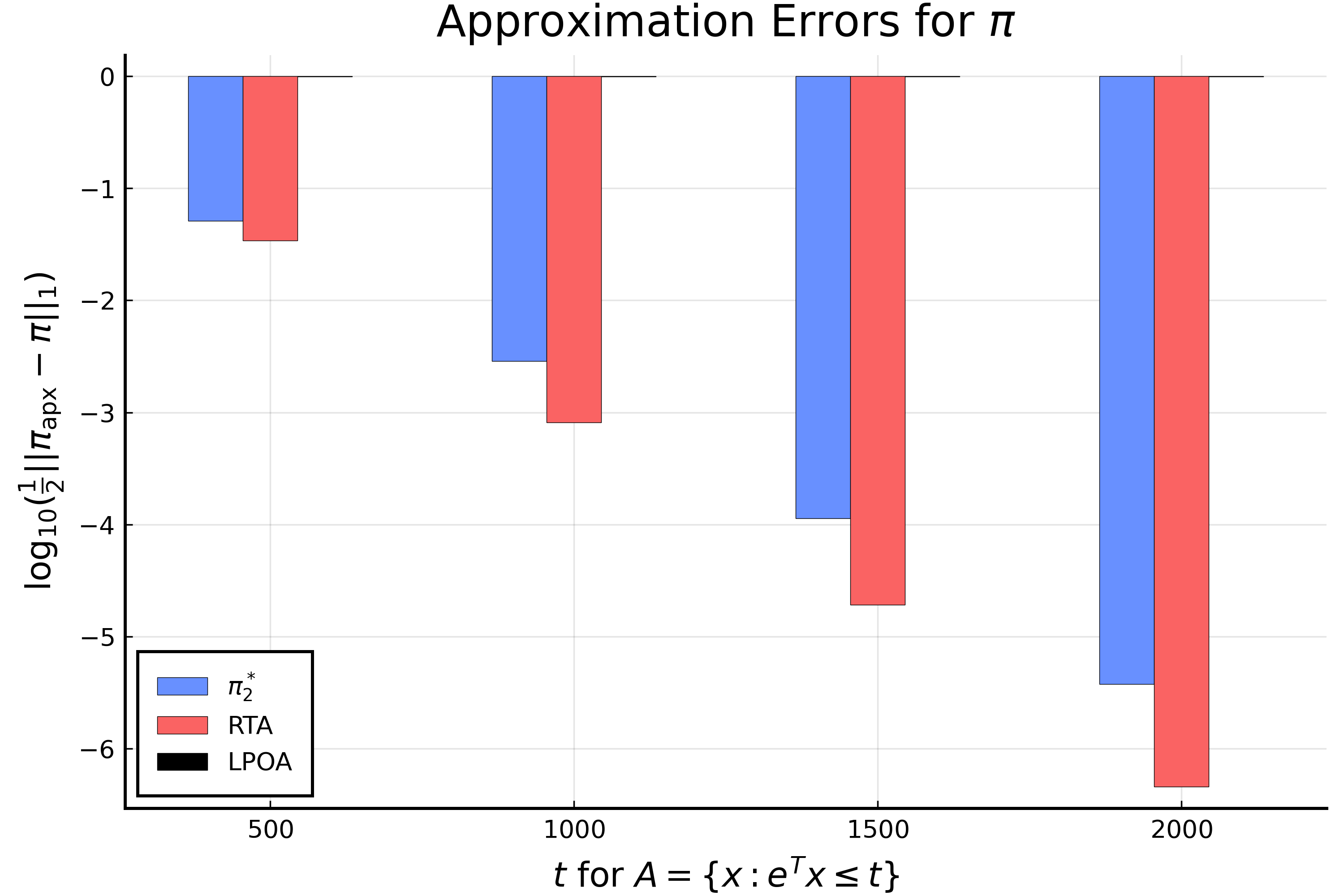}
}
\subfloat[]{\label{sf_gm1_apx_times}
\includegraphics[width=60mm]{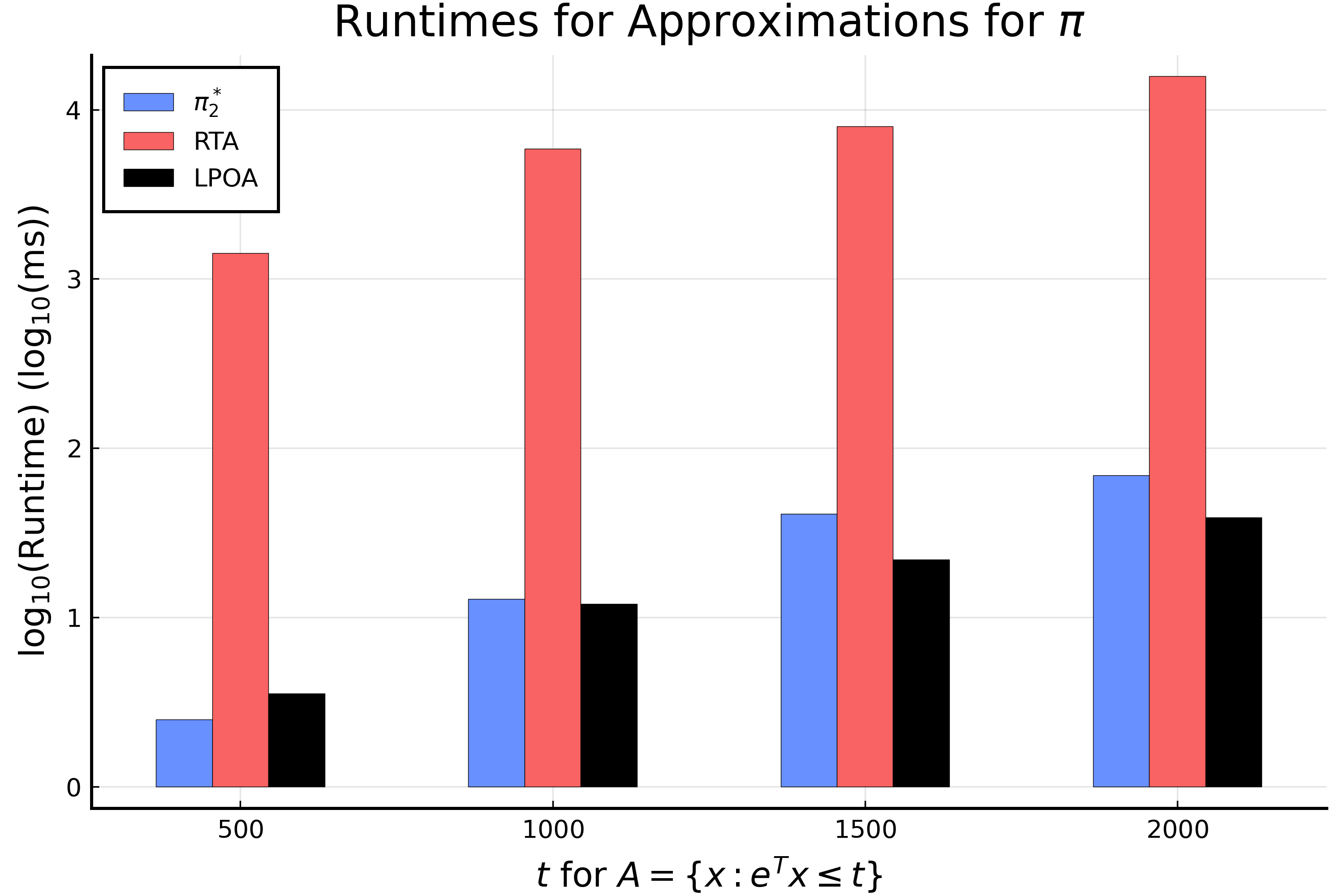}
}\\
\subfloat[]{\label{sf_gm1_tvbounds}
\includegraphics[width=60mm]{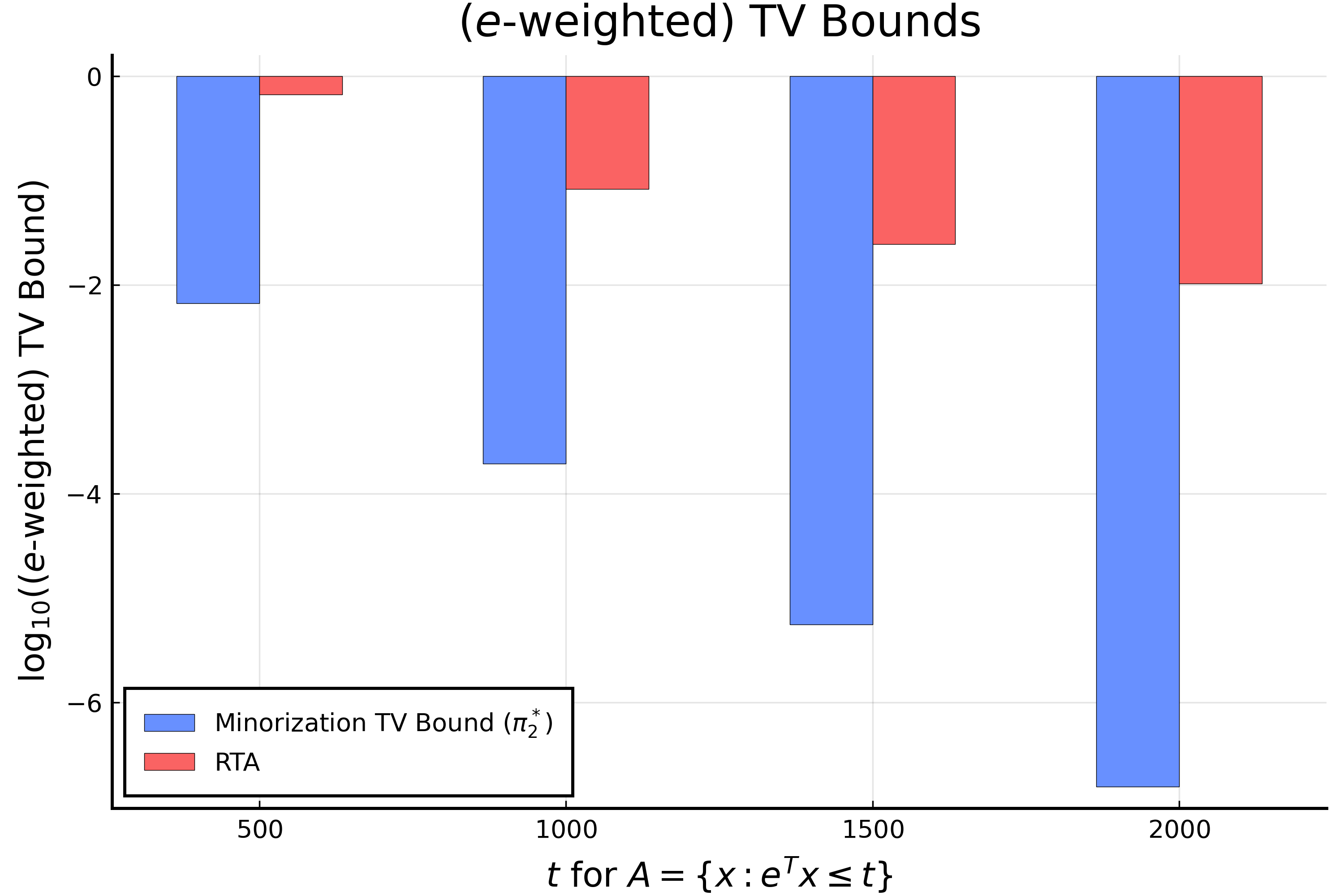}
}
\subfloat[]{\label{sf_gm1_tvbounds_times}
\includegraphics[width=60mm]{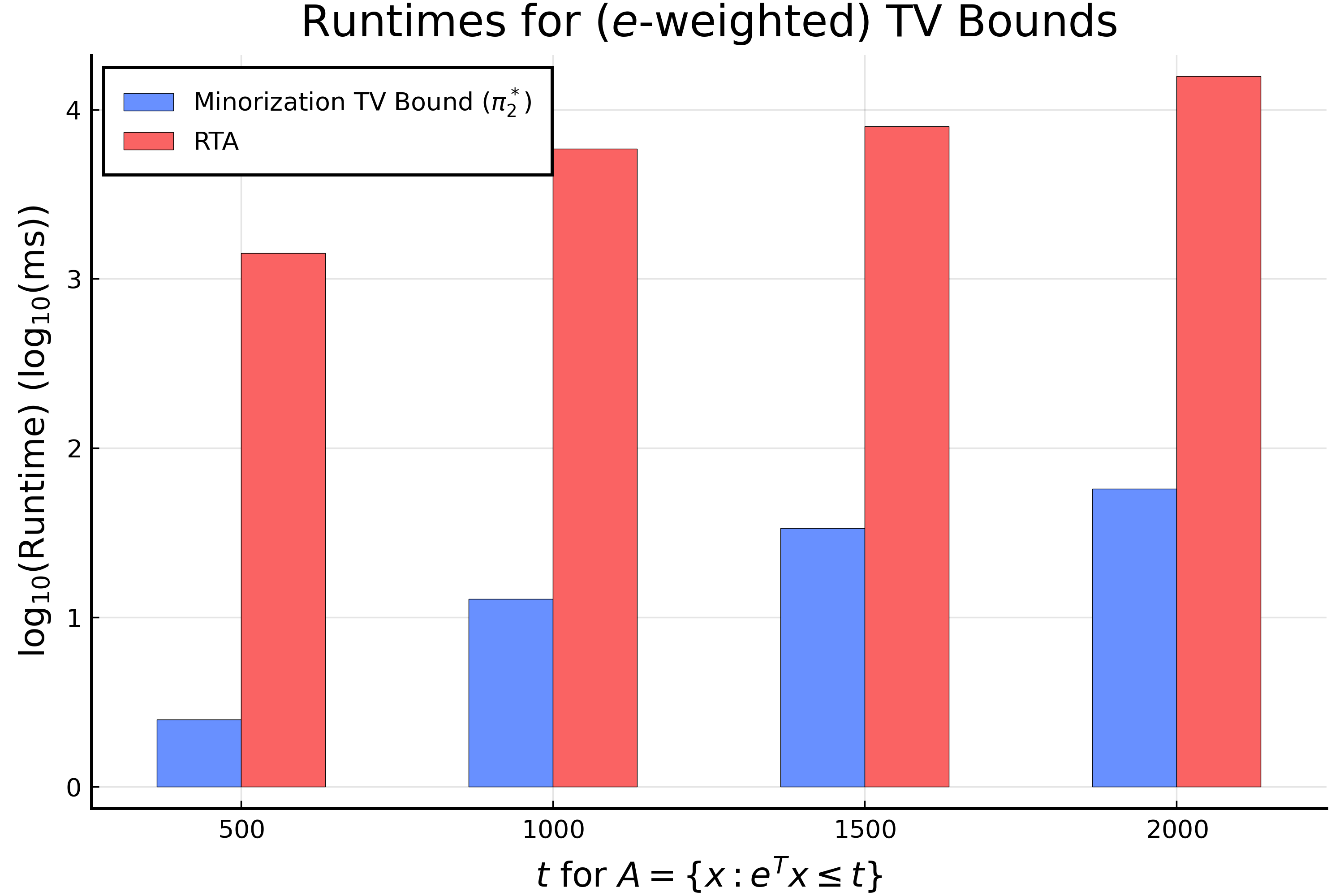}
}\\
\subfloat[]{\label{sf_gm1_convergence_plot}
\includegraphics[width=60mm]{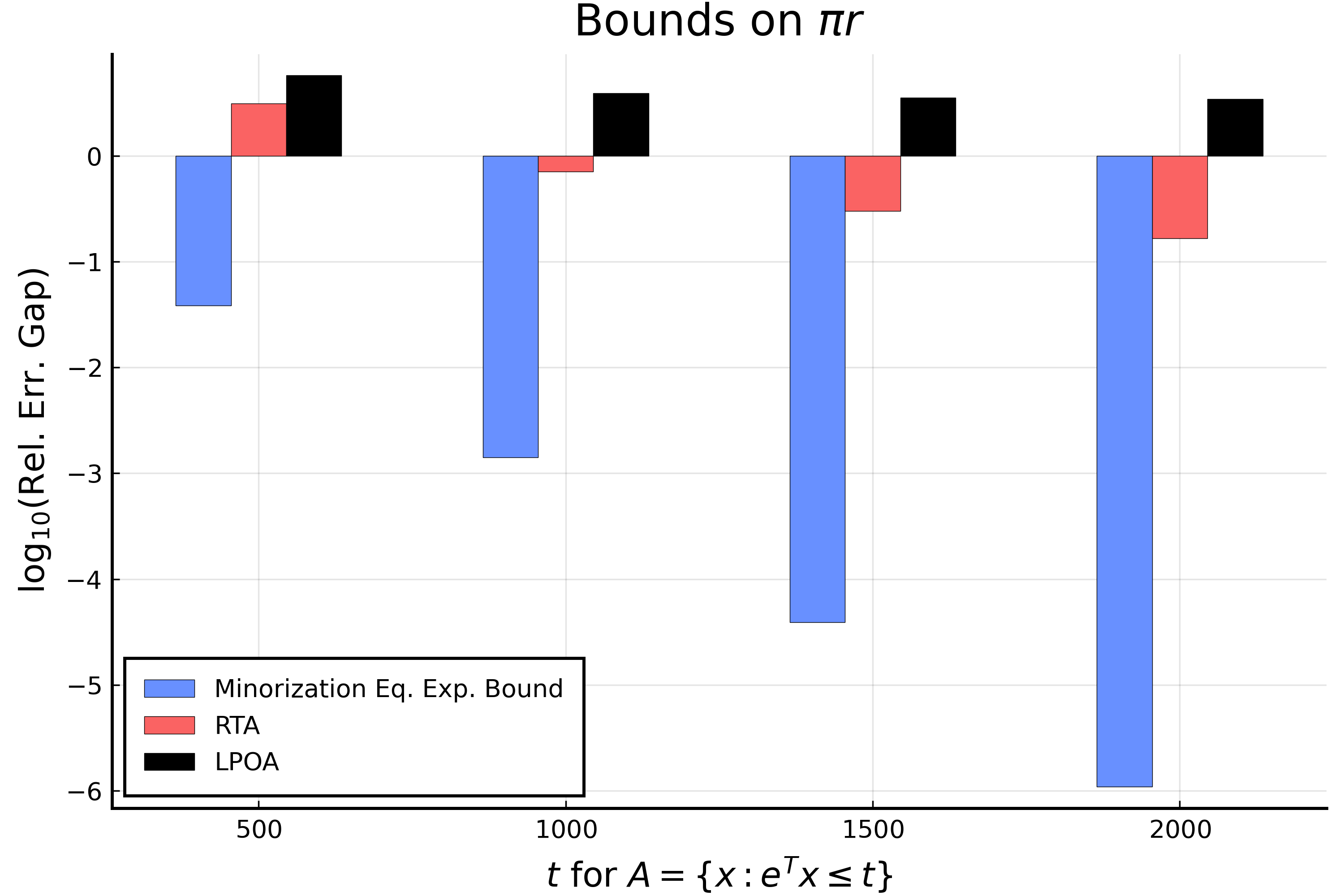}
}
\subfloat[]{\label{sf_gm1_convergence_plot_times}
\includegraphics[width=60mm]{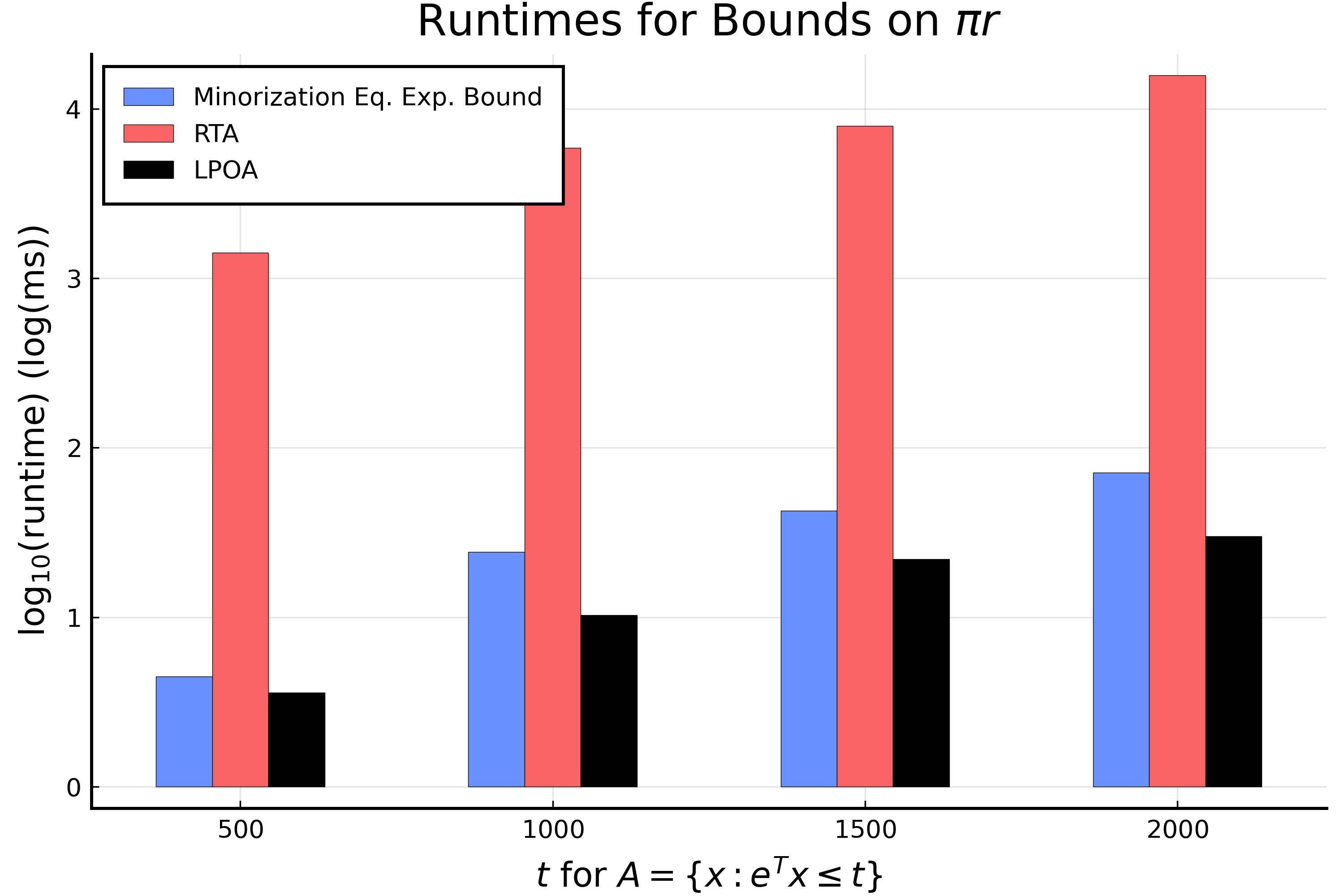}
}
\caption{$G/M/1$ Queue: Comparisons with Other Methods.}\label{fig::GM1}
\end{figure}

\begin{figure}[H]
\subfloat[]{\label{sf_KuntzToggleSwitch_apxError}
\includegraphics[width=60mm]{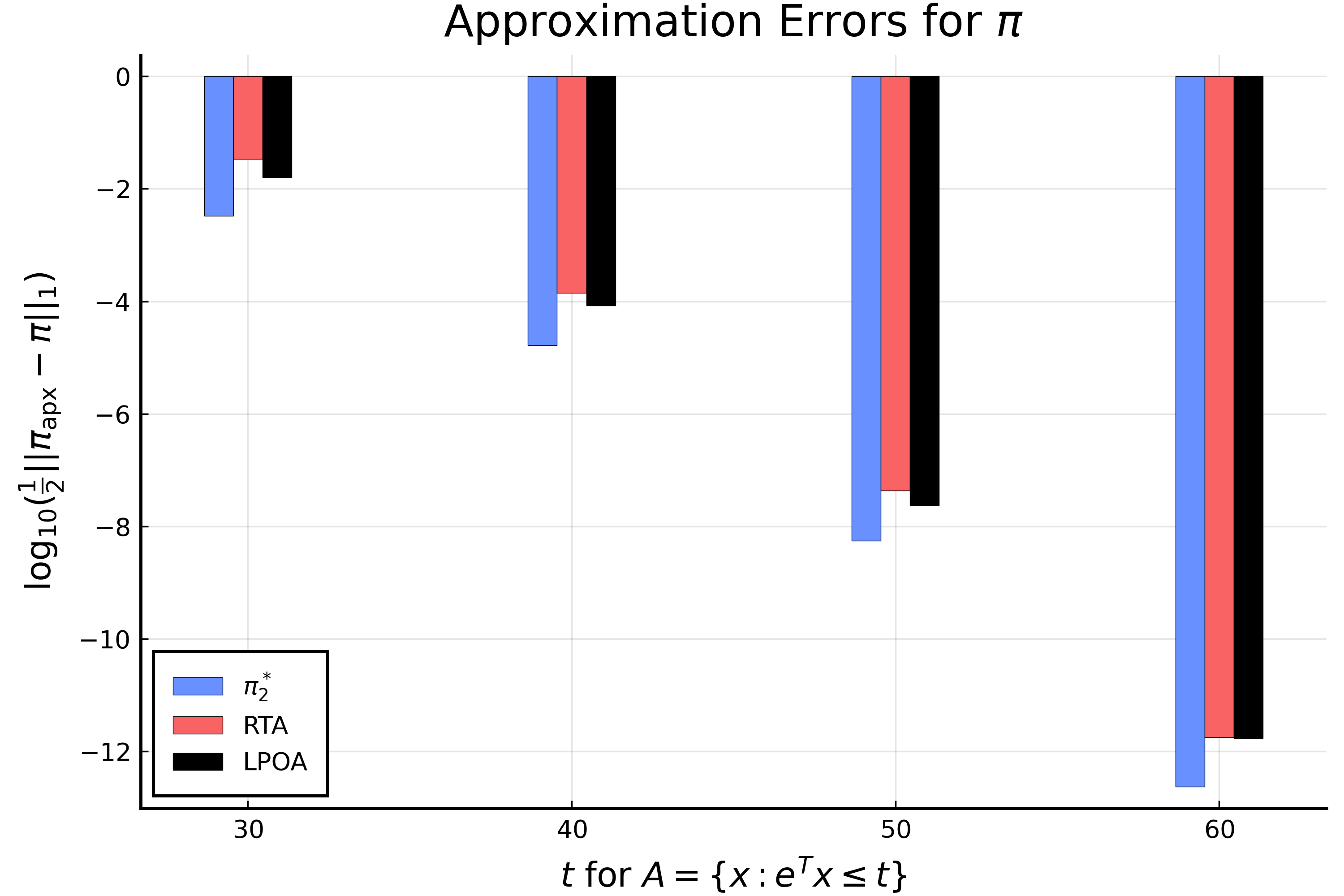}
}
\subfloat[]{\label{sf_KuntzToggleSwitch_apx_times}
\includegraphics[width=60mm]{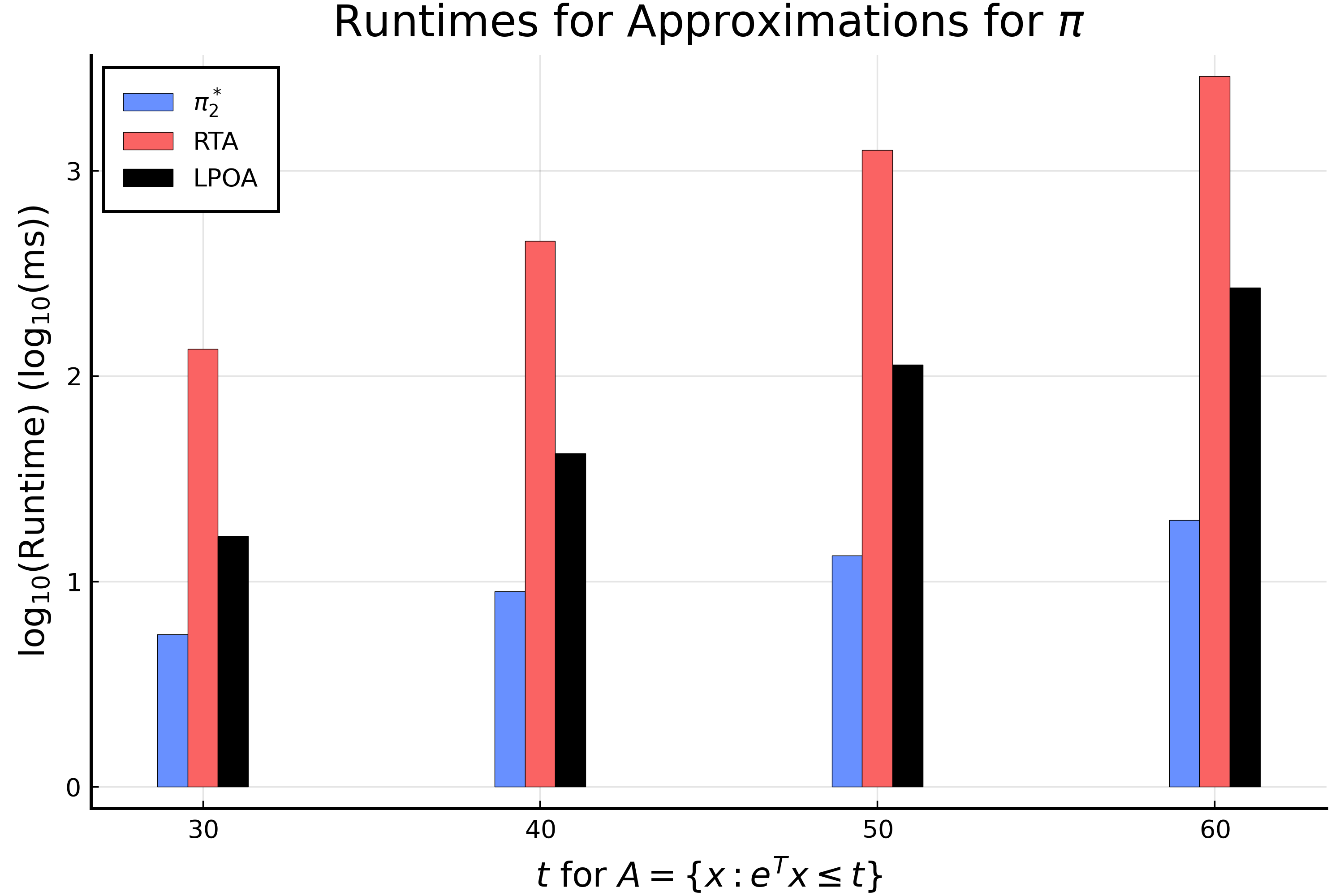}
}\\
\subfloat[]{\label{sf_KuntzToggleSwitch_tvbounds}
\includegraphics[width=60mm]{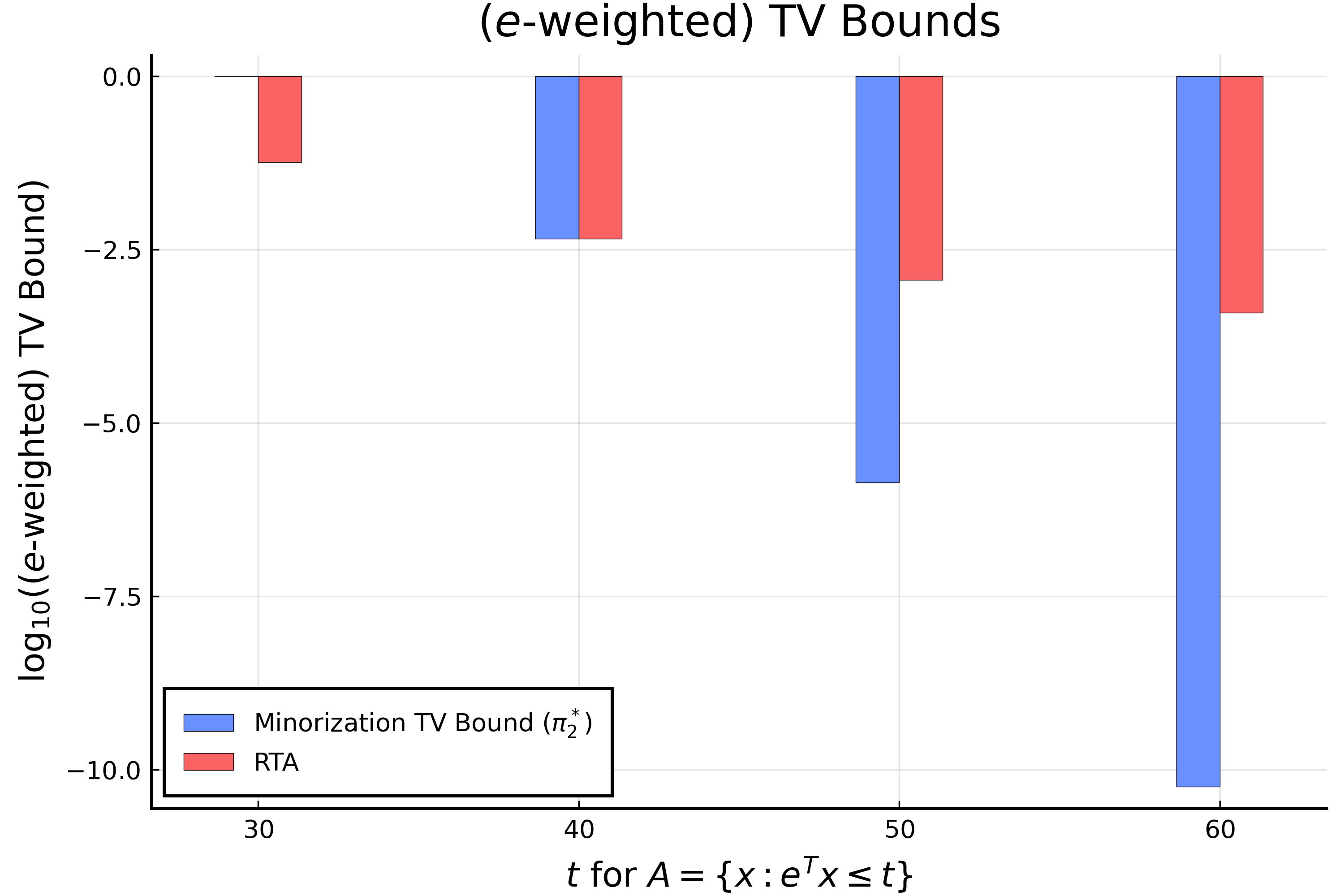}
}
\subfloat[]{\label{sf_KuntzToggleSwitch_tvbounds_times}
\includegraphics[width=60mm]{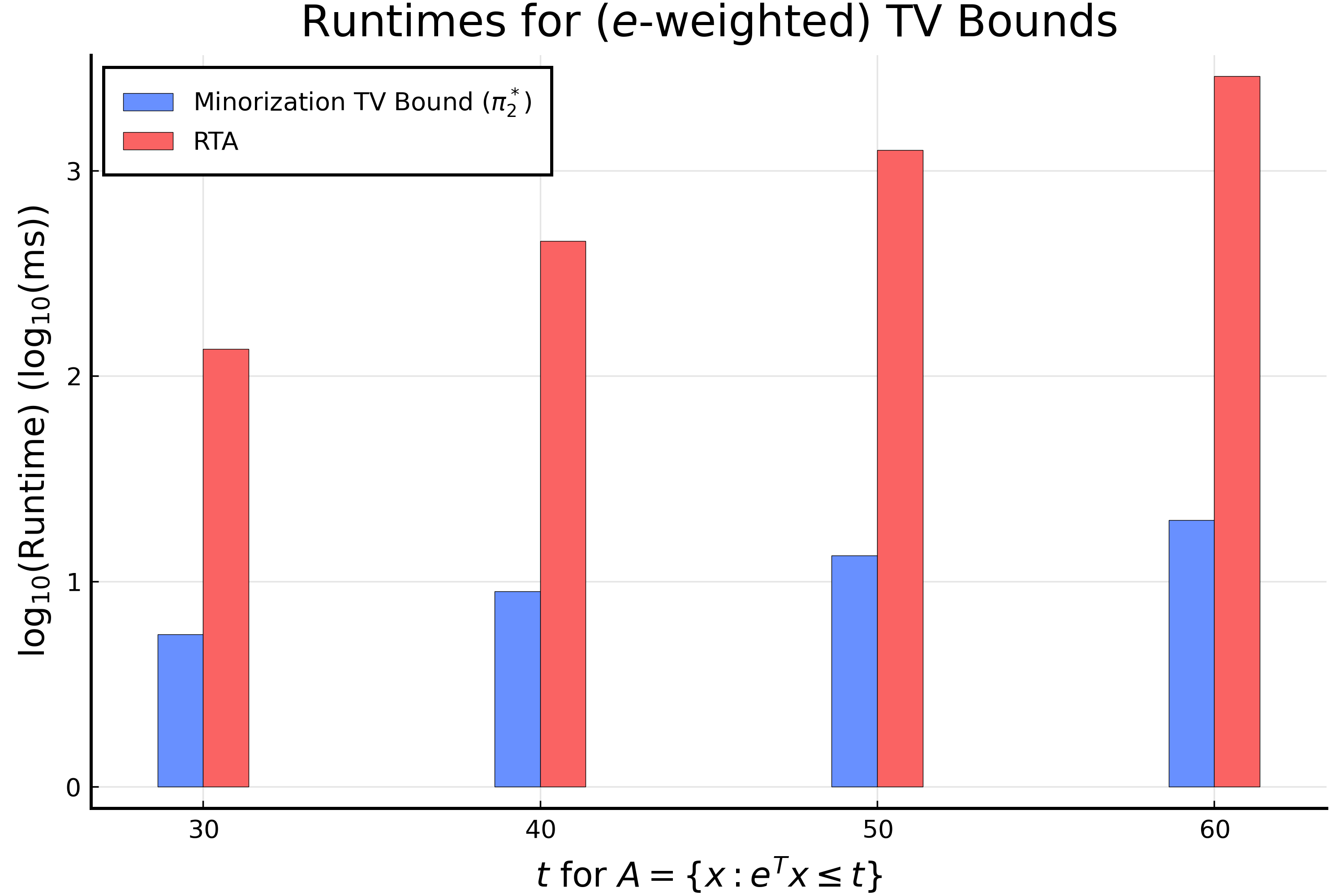}
}\\
\subfloat[]{\label{sf_KuntzToggleSwitch_convergence_plot}
\includegraphics[width=60mm]{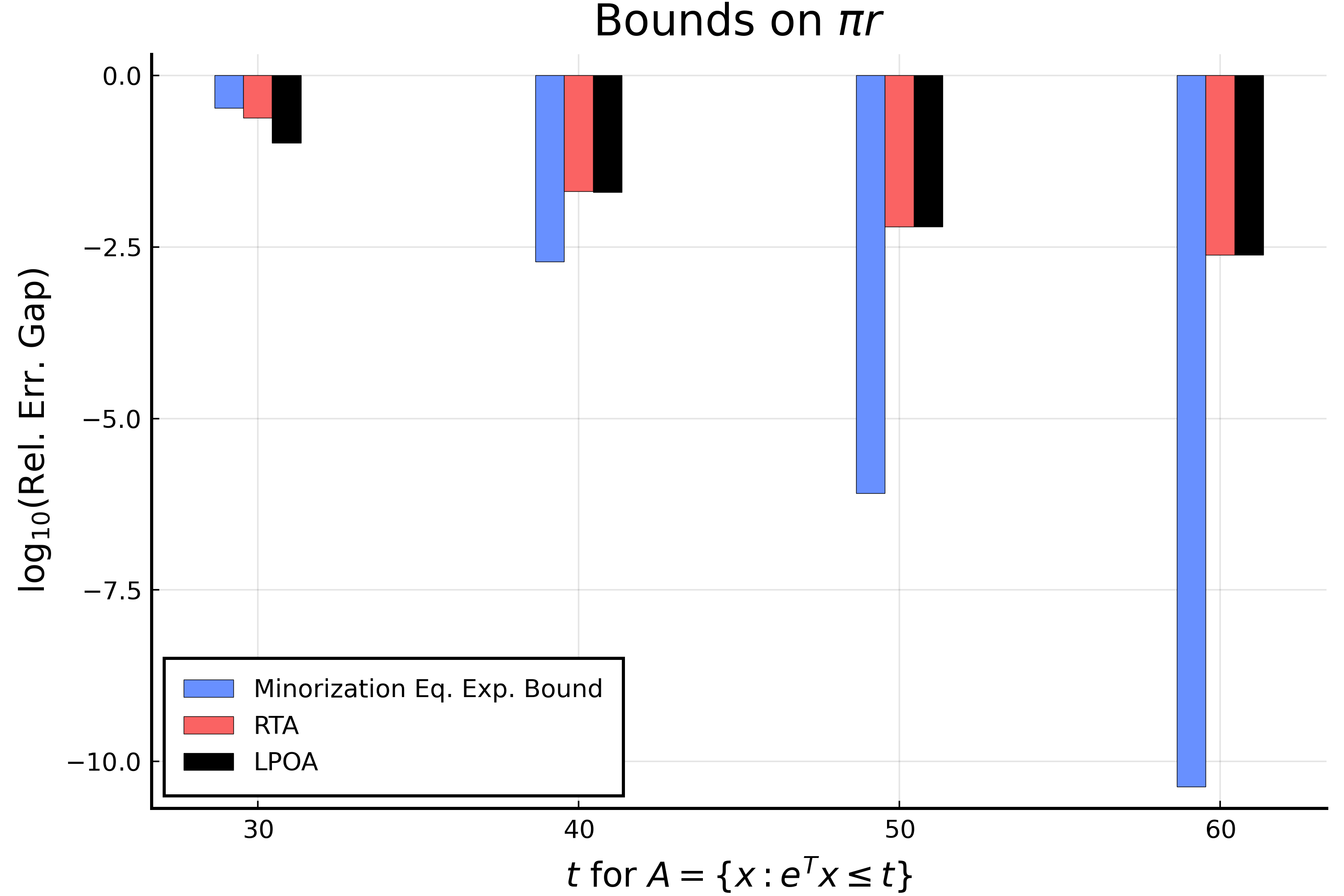}
}
\subfloat[]{\label{sf_KuntzToggleSwitch_convergence_plot_times}
\includegraphics[width=60mm]{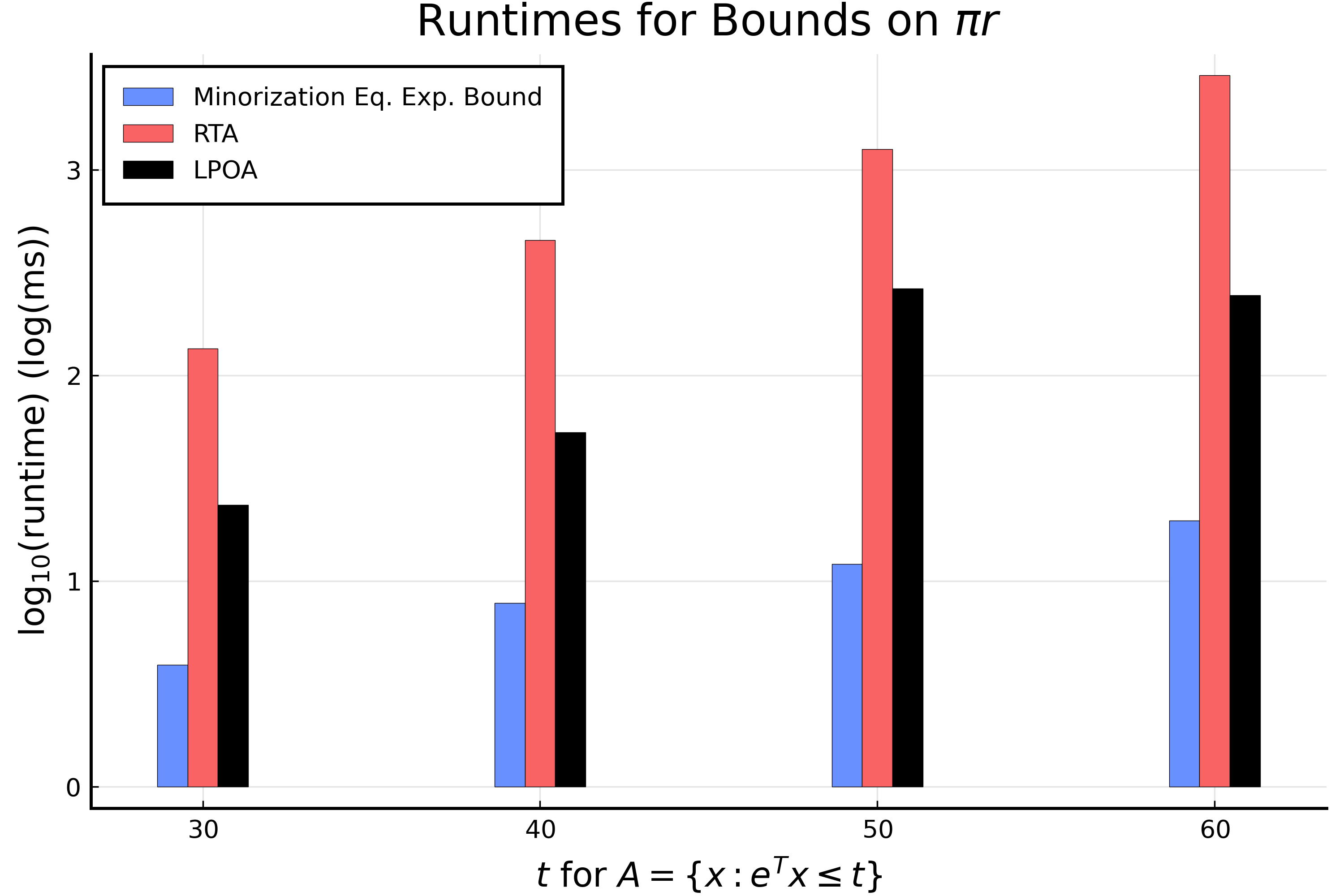}
}
\caption{TS(20,1): Comparisons with Other Methods.\label{fig::KTS}}
\end{figure}

\begin{figure}[H]
\subfloat[]{\label{sf_ToggleSwitch99_apxError}
\includegraphics[width=60mm]{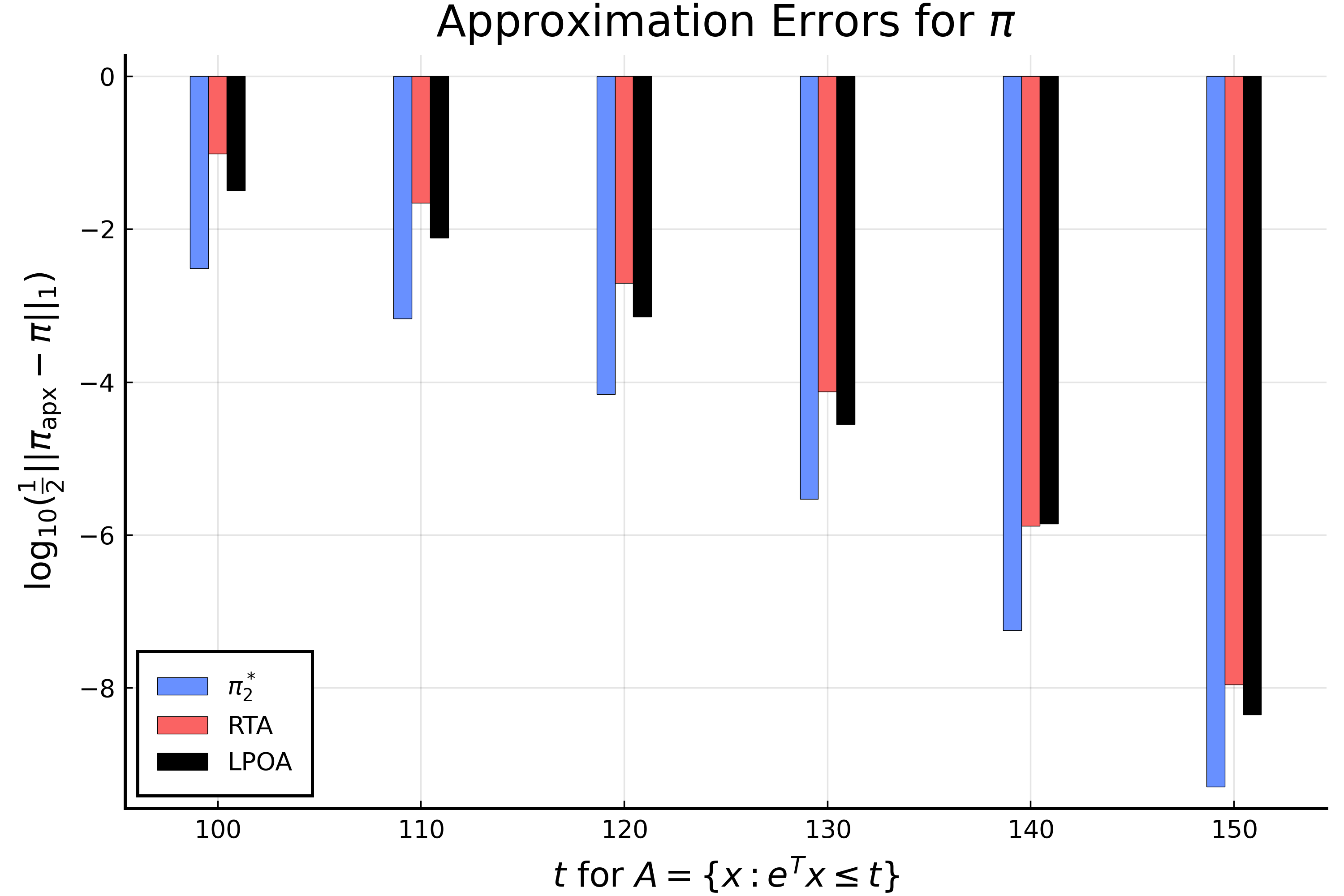}
}
\subfloat[]{\label{sf_ToggleSwitch99_apx_times}
\includegraphics[width=60mm]{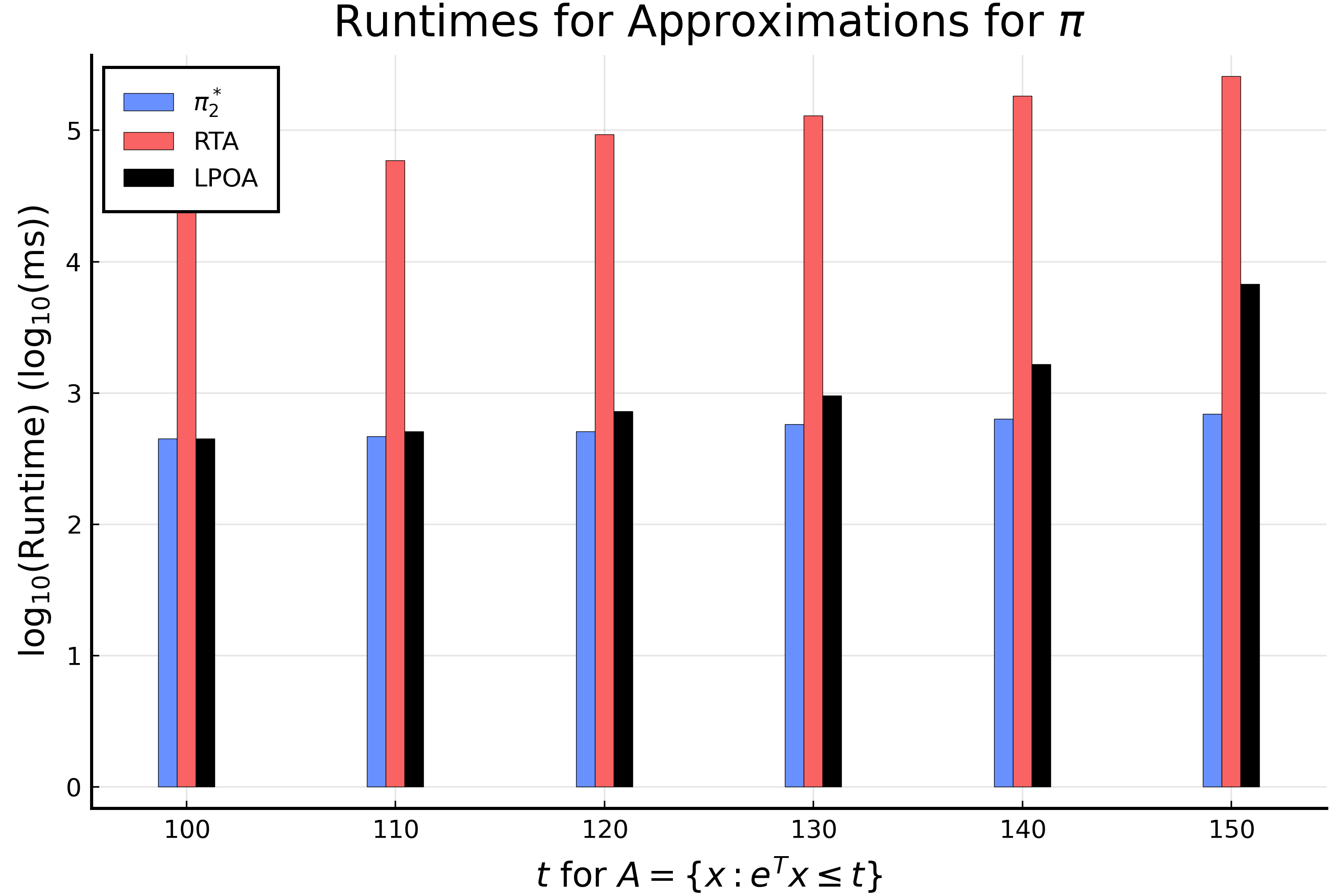}
}\\
\subfloat[]{\label{sf_ToggleSwitch99_tvbounds}
\includegraphics[width=60mm]{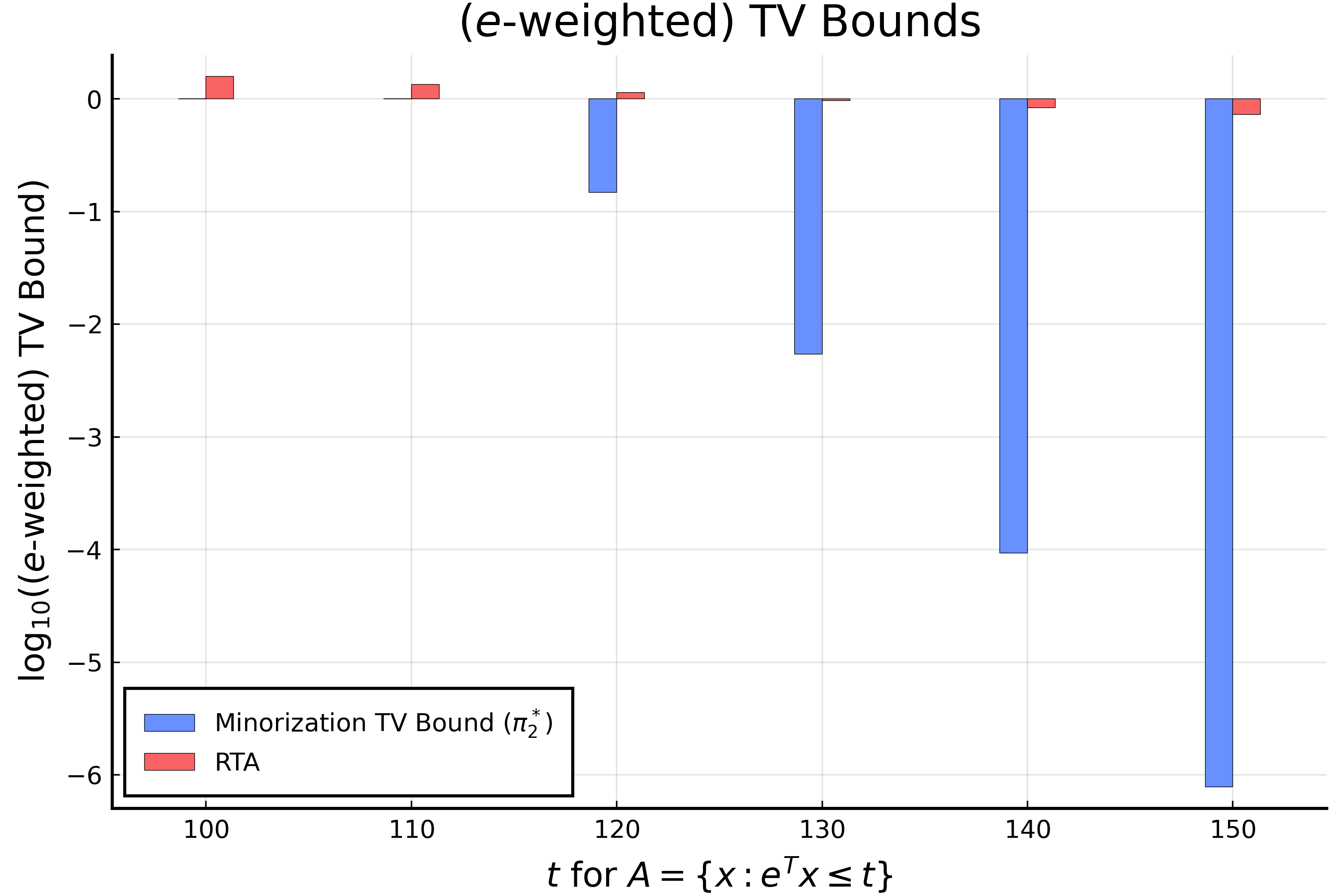}
}
\subfloat[]{\label{sf_ToggleSwitch99_tvbounds_times}
\includegraphics[width=60mm]{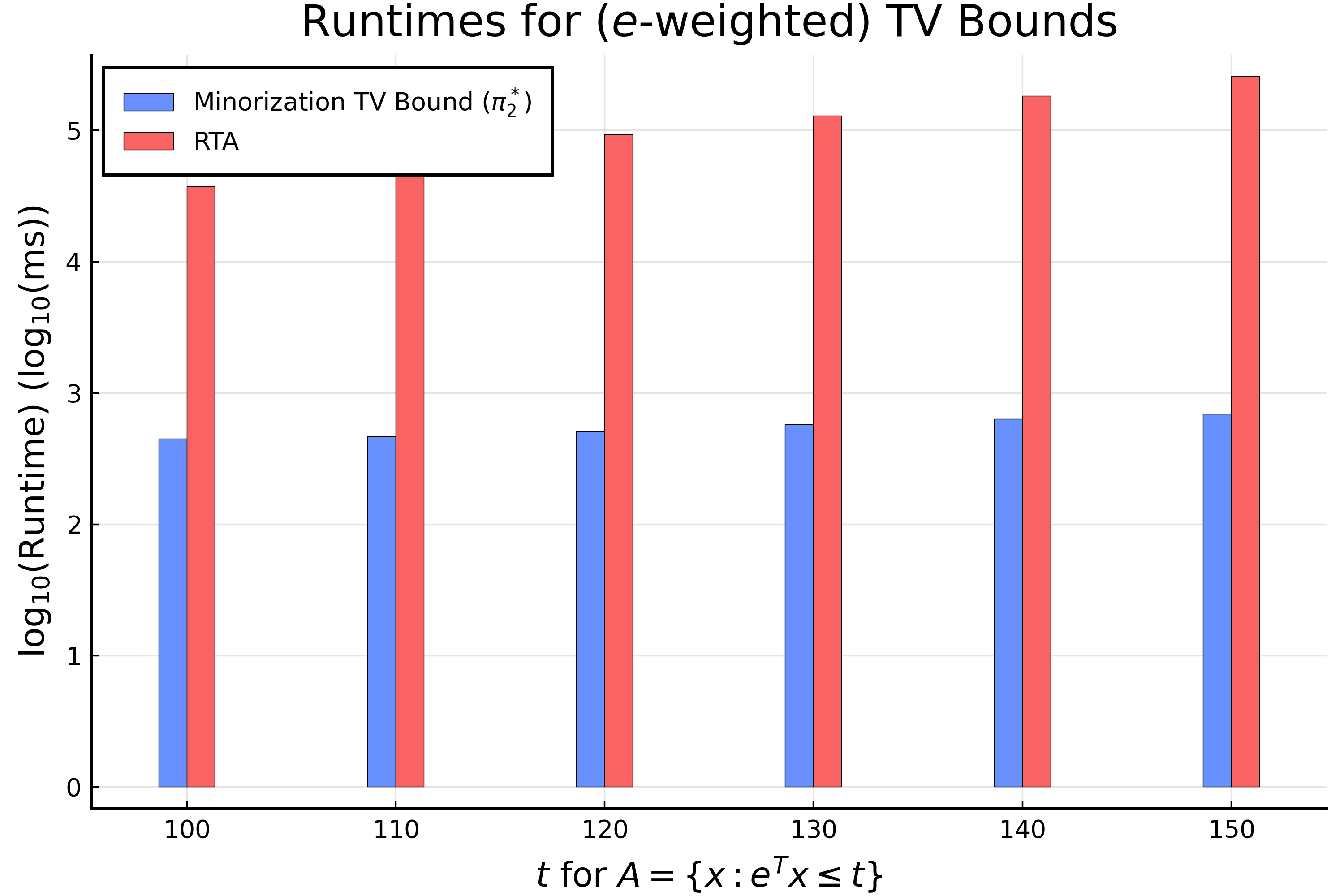}
}\\
\subfloat[]{\label{sf_ToggleSwitch99_convergence_plot}
\includegraphics[width=60mm]{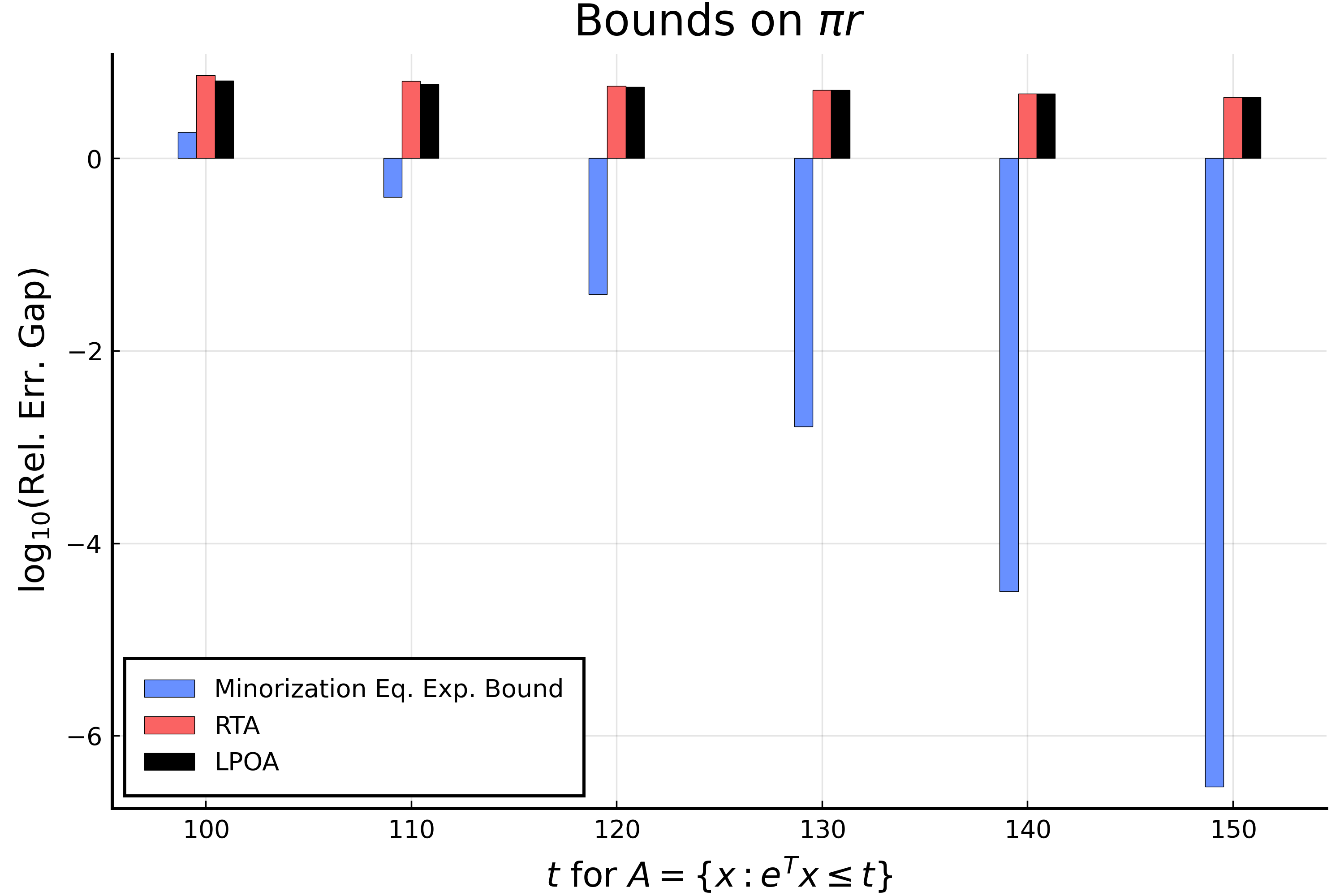}
}
\subfloat[]{\label{sf_ToggleSwitch99_convergence_plot_times}
\includegraphics[width=60mm]{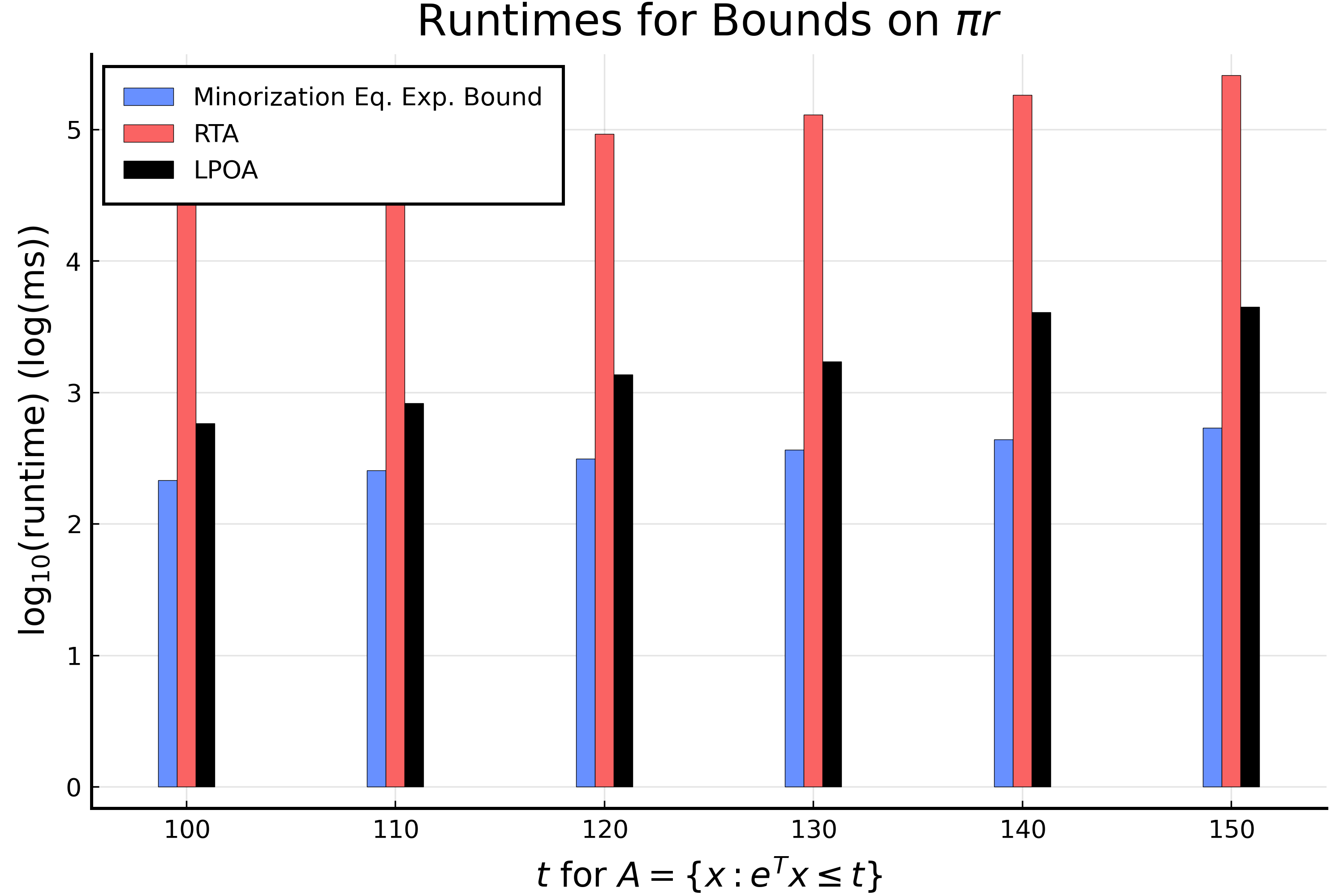}
}\caption{TS(90,1): Comparisons with Other Methods.\label{fig::TS99}}
\end{figure}
\end{includefigures}

\bibliographystyle{apalike}
\bibliography{TruncationPaper}

\end{document}